\newtheorem{thm}{Theorem}[section]
\newtheorem{prop}[thm]{Proposition}
\newtheorem{fact}[thm]{Fact}
\newtheorem{lemma}[thm]{Lemma}
\newtheorem{cor}[thm]{Corollary}
\newcommand{\Hol}{\mbox{{\rm Hol}}}
\newcommand{\Alg}{\mbox{{\rm Alg}}}
\newcommand{\SZ}{{\cal Z}^{\Delta}}
\newcommand{\Gr}{\Bbb G \mbox{r}}
\newcommand{\Z}{\Bbb Z}
\newcommand{\C}{\Bbb C}
\newcommand{\R}{\Bbb R}
\newcommand{\K}{\Bbb K}
\newcommand{\G}{\Bbb G}
\renewcommand{\P}{{\rm P}}
\newcommand{\RP}{\Bbb R\mbox{{\rm P}}}
\newcommand{\M}{\mbox{{\rm M}}}
\newcommand{\Map}{\mbox{{\rm Map}}}
\newcommand{\Rat}{\mbox{{\rm Rat}}}
\newcommand{\CP}{\Bbb C {\rm P}}
\newcommand{\dis}{\displaystyle}
\newcommand{\p}{\prime}
\newcommand{\E}{\tilde{E}}
\newcommand{\I}{\mbox{{\rm (i)}}}
\newcommand{\II}{\mbox{{\rm (ii)}}}
\newcommand{\III}{\mbox{{\rm (iii)}}}
\newcommand{\mo}{\mbox{{\rm (mod $2$)}}}
\title{
{\bf
Spaces of algebraic and continuous maps between  real algebraic varieties
}}
\author{{\it by }\bf 
Michal Adamaszek\footnote{
E-mail: aszek@mimuw.edu.pl}
\\
\normalsize{\it $($Department of Math., Univ. of Warsaw, Warsaw, Poland$)$}
\\
\normalsize{\it $($Warwick Math. Institute and DIMAP, Univ. of Warwick, Coventry, UK$)$}
\\
\bf 
Andrzej Kozlowski\footnote{
E-mail: andrzej@akikoz.net}
\\
\normalsize{\it $($Department of Math., Tokyo Denki University,
Chiba, Japan$)$}
\\
\bf
Kohhei Yamaguchi\footnote{
E-mail: kohhei@im.uec.ac.jp}
\\
\normalsize{\it $($Department of Math.,
Univ. Electro-Commun., Tokyo, Japan$)$}
}
\date{}
\begin{document}
\maketitle

\begin{abstract}
We consider the inclusion of the space of algebraic (regular) maps 
between real algebraic varieties
in the space 
of all continuous maps. For a certain class of real algebraic varieties, which include real projective spaces,  it is well known that the space of real algebraic maps is a dense subset of the space of all continuous maps. 
Our first result shows that, for this class of varieties,  the inclusion is also
a homotopy equivalence. After proving this, we restrict the class of varieties to real projective spaces.
In this case, the space of algebraic maps
has a  \lq minimum degree\rq\   
filtration by finite dimensional subspaces and it is natural to expect that the homotopy types of the terms of the filtration approximate closer 
and closer  the homotopy type of the space of continuous mappings 
as the degree increases. We prove this and compute the lower bounds of this approximation 
of these spaces. 
This result can be seen as a generalization of the results of Mostovoy, Vassiliev and others
on the topology of the space of real rational maps and the space of real polynomials without $n$-fold roots.  It can also  viewed as real analogue of Mostovoy's work on the the topology of the space of holomorphic maps
between complex projective spaces, 
which generalizes Segal's work 
on the space of complex rational maps.

\end{abstract}
\renewcommand{\thefootnote}{\fnsymbol{footnote}}


\section{Introduction.}
Let $X$ and $Y$ be two topological spaces 
with some additional structures, 
e.g. that of a complex or symplectic manifold or algebraic variety.  
The space $\mathcal{S} (X,Y)$ of continuous maps $X \to Y$ 
preserving the structure 
is a subspace of the space of all continuous maps $\Map(X,Y)$ and it is natural to ask if  the two spaces are in some topological sense (e.g. homotopy, homology) equivalent. Another reason to ask for such equivalence is that the space $\mathcal{S} (X,Y)$ might then provide a smaller homotopy or homology model for the space of all maps. Early examples of this type of phenomenon can be found in \cite{Grom}.
In many cases of interest the infinite dimensional space  $\mathcal{S} (X,Y)$  
has a filtration by finite dimensional subspaces, 
given by some kind of  \lq\lq map degree\rq\rq, 
and  the topology of these finite dimensional spaces 
approximates the topology of the entire space of continuous maps;
the approximation becoming more accurate as the degree increases. 
In recent years a great deal of attention has been 
devoted to studying problems of the above kind in the case 
where the  \lq\lq additional structure\rq\rq\  
in question is the structure of a complex manifold 
(so that the structure preserving maps are holomorphic maps), 
with the space $X$  a compact surface and the space $Y$ a certain complex manifold. 
The first explicit result of this kind seems to have been the following theorem of Segal:

\begin{thm}
[G. Segal, \cite{Se}]
\label{thm: S}
If $M_g$ is a compact closed Riemann surface of genus $g$, the inclusions 
$$
\begin{cases}
j_{d,\C}:\Hol_d(M_g,\CP^n)\to \Map_d(M_g,\CP^n)
\\
i_{d,\C}:\Hol_d^*(M_g,\CP^n)\to \Map_d^*(M_g,\CP^n)
\end{cases}
$$
are homology equivalences through dimension $(2n-1)(d-2g)-1$
if $g\geq 1$ and homotopy equivalences through dimension $(2n-1)d-1$ if $g=0$.
\par
Here $\Hol_d^*(M_g,\CP^n)$ $($resp. $\Hol_d(M_g,\CP^n))$
denote the spaces consisting of all
based $($resp. free$)$ holomorphic and $\Map_d^*(M_g,\CP^n)$ $($resp. $\Map_d(M_g,\CP^n))$  of all
based $($resp. free$)$
continuous maps $f:M_g\to \CP^n$ of degree $d$.
\end{thm}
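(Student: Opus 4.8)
The plan is to reduce at once to the based case: the evaluation fibrations $\Map^*_d(M_g,\CP^n)\to\Map_d(M_g,\CP^n)\to\CP^n$ and $\Hol^*_d(M_g,\CP^n)\to\Hol_d(M_g,\CP^n)\to\CP^n$ are compatible under the inclusion, so a degreewise comparison on fibres will upgrade to the free case with the same range. Fix therefore the based statement. The first step is to write down the two models being compared. A based holomorphic map $M_g\to\CP^n$ of degree $d$ is, in homogeneous coordinates, an $(n+1)$-tuple of sections of a fixed degree-$d$ line bundle with no common zero, taken up to scalars; for $g=0$ this is exactly the space $\Rat_d$ of $(n+1)$-tuples of monic degree-$d$ polynomials in one variable with no common root. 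On the other side, looping the Hopf fibration $S^1\to S^{2n+1}\to\CP^n$ twice gives $\Map^*_d(\CP^1,\CP^n)\simeq\Omega^2_d\CP^n\simeq\Omega^2_d S^{2n+1}$, and, via a scanning argument, $\Map^*_d(M_g,\CP^n)$ is the space of sections of a bundle over $M_g$ with fibre $\Omega^2 S^{2n+1}$.

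The heart of the matter is then a comparison carried out in two independent steps. \emph{Stabilisation and group completion.} Multiplying polynomials (more generally, tensoring sections with a fixed section of a degree-one bundle) gives maps $\Rat_d\to\Rat_{d+1}$; for $g=0$ these make $\coprod_d\Rat_d$ a homotopy-commutative topological monoid whose group completion one identifies, through a configuration-space/scanning argument, with $\Omega^2 S^{2n+1}=\coprod_{e\in\Z}\Omega^2_e S^{2n+1}$, and for general $g$ the corresponding limit is the full section space modelling $\Map(M_g,\CP^n)$. Hence the inclusion becomes a homology isomorphism after passing to the limit $\Rat_\infty:=\mathrm{colim}_d\,\Rat_d\to\Map^*_\infty$. \emph{Homological stability.} One shows $\Rat_d\hookrightarrow\Rat_{d+1}$ is a homology isomorphism through a range growing linearly in $d$, the explicit bound being $(2n-1)(d-2g)-1$ over the surface $M_g$. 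I would prove this by stratifying $\Rat_d$ according to the coincidence pattern of the zero-divisors of the $(n+1)$-tuple, i.e. by building the Vassiliev-type truncated simplicial resolution of the discriminant and running its spectral sequence; the stratum forcing $k$ coincidences contributes only in degrees shifted by roughly $(2n-1)k$, which produces the stated slope and, through the dimension of the space of degree-$d$ sections on $M_g$, the correction $-2g$. Combining the two steps gives that $i_{d,\C}:\Hol^*_d\to\Map^*_d$ is a homology equivalence through dimension $(2n-1)(d-2g)-1$, and through $(2n-1)d-1$ when $g=0$ (where $M_0=S^2$, the bundle is trivial, and the resolution is cleaner); the free statement follows from the fibration comparison above.

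Finally, when $g=0$ I would upgrade the homology equivalence to a homotopy equivalence in the same range: for $n\ge 2$ both spaces are simply connected, so this is immediate from the relative Hurewicz and Whitehead theorems, while for $n=1$ one checks separately that the map induces an isomorphism on $\pi_1$ (both fundamental groups are infinite cyclic, generated compatibly) and then argues on universal covers. For $g\ge1$ no such upgrade holds: $\pi_1(M_g)$ acts nontrivially, the relevant section space is not simply connected, and the conclusion genuinely remains homological. The step I expect to be the real obstacle is the homological stability with the \emph{precise} numerical range: pinning down the exact bound $(2n-1)(d-2g)-1$ requires delicate control of the dimensions of the strata (equivalently, the right truncation level of the simplicial resolution) together with a vanishing argument for the high-filtration part of the spectral sequence, and it is exactly there that the genus $g$ enters the estimate.
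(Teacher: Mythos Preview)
The paper does not prove this theorem; it is quoted as background, attributed to Segal and cited to \cite{Se}, with no argument supplied. There is therefore no proof in the paper to compare your proposal against.

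That said, your sketch is a reasonable outline of the now-standard two-step strategy (group completion to identify the stable limit, then homological stability to control the unstable range), though Segal's original argument in \cite{Se} is organised rather differently: he works directly with an explicit scanning-type map and a hands-on filtration of $\Rat_d$ by root collision, rather than invoking a Vassiliev-style simplicial resolution of the discriminant (which is historically later). One point where your plan is genuinely thin is the higher-genus case: for $g\ge 1$ the spaces $\Hol_d^*(M_g,\CP^n)$ do \emph{not} assemble into a topological monoid in any useful way, so the group-completion step as you have written it does not go through. Segal handles $g\ge 1$ by a separate inductive argument, cutting $M_g$ along a circle and comparing with the genus-zero result already in hand; the $-2g$ in the range comes from Riemann--Roch controlling the dimension of the space of sections, exactly as you suspect, but the mechanism for reducing to that estimate is a gluing/fibration argument rather than a monoid structure.
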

Recall that a map $f:X\to Y$ is called {\it a homology $($resp. homotopy$)$
equivalence through dimension }$N$ if the induced homomorphism
$
f_*:H_k(X,\Z)\to H_k(Y,\Z)$
(resp.
$f_*:\pi_k(X)\to\pi_k(Y))$
is an isomorphism for all $k\leq N$.

Segal conjectured  in \cite{Se} that 
this result should generalize to a much larger class of target spaces, 
such as complex  Grassmannians and  flag manifolds, 
and even possibly to higher dimensional source spaces. Almost all of the work inspired by Segal's results  has been  concerned  with extending the Segal results to a larger class of target spaces, while the source space has been kept (complex) one dimensional
(e.g. \cite{BHMM}, \cite{Gr}, \cite{Gu}, \cite{Gu1}, \cite{GKY1}, \cite{Ki}).
Several authors (e.g.   \cite{BHM},  \cite{CJS}) have attempted to find the most general target spaces for which the stability theorem holds. There have been, however, very few  attempts  to investigate (as suggested by Segal) the phenomenon of topological stability for source spaces of complex dimension greater than 1.  The first steps in this direction were taken by Havlicek \cite{Hav} who considered the space of holomorphic maps from $\CP^1\times \CP^1$ to complex Grassmanians and Kozlowski and Yamaguchi \cite{KY1}  who studied the case of linear maps  $\CP^m \to \CP^n$, where $m\leq n$. 
A major step was taken when Mostovoy \cite{Mo2} published a proof of the following analogue of Segal's theorem for the space of holomorphic maps  
from $\CP^m$ to $\CP^n$. 

\begin{thm}
[J. Mostovoy, \cite{Mo2}]
\label{thm: M}
If $2\leq m\leq n$ and $d\geq 1$ are integers,
the inclusion
$j_{d,\C}:\Hol_d(\CP^m,\CP^n)\to \Map_d(\CP^m,\CP^n)$
is a homotopy equivalence through dimension $(2n-2m+1)\Big(\big\lfloor\frac{d+1}{2}\big\rfloor+1\Big)-1$
if $m<n$, and a homology equivalence through the same dimension if $m=n$.
\end{thm}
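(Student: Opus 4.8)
The plan is to run Vassiliev's method of simplicial resolutions of discriminants, in four steps.

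\emph{Step 1: reduction to based maps.} Evaluation at the basepoint exhibits $\Hol_d(\CP^m,\CP^n)$ and $\Map_d(\CP^m,\CP^n)$ as fibre bundles over $\CP^n$ --- the first because $\PGL_{n+1}(\C)$ acts transitively on $\CP^n$ and on the holomorphic maps --- with fibres $\Hol_d^*(\CP^m,\CP^n)$ and $\Map_d^*(\CP^m,\CP^n)$, and $j_{d,\C}$ covers the identity of $\CP^n$. Comparing the two Serre spectral sequences, a homology equivalence through dimension $N$ between the fibres induces one between the total spaces; comparing homotopy exact sequences does the same for homotopy equivalences. So it suffices to prove the based statement for $i_{d,\C}\colon\Hol_d^*(\CP^m,\CP^n)\to\Map_d^*(\CP^m,\CP^n)$.

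\emph{Step 2: discriminant and Alexander duality.} A based degree-$d$ holomorphic map is an $(n+1)$-tuple $(p_0,\dots,p_n)$ of degree-$d$ forms in $m+1$ variables, normalised at the basepoint, with empty common zero locus on $\CP^m$; this realises $\Hol_d^*(\CP^m,\CP^n)$ as the complement $V_d\setminus\Sigma_d$ of the resultant variety $\Sigma_d$ of tuples possessing a common projective zero, inside a complex affine space $V_d$ of some dimension $D$. Since $n\ge m$, the variety $\Sigma_d$ has complex codimension $\ge n-m+1$ in $V_d$ (exactly $1$ when $m=n$). Passing to one-point compactifications and applying Alexander duality relates the reduced cohomology of $\Hol_d^*(\CP^m,\CP^n)$ to the Borel--Moore homology of $\Sigma_d$, namely to $\bar H^{lf}_{2D-1-k}(\Sigma_d)$ in degree $k$; the problem is thereby reduced to computing $\bar H^{lf}_*(\Sigma_d)$ through a range, and likewise for the discriminant governing $\Map_d^*(\CP^m,\CP^n)$.

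\emph{Step 3: truncated simplicial resolution --- the crux.} I would resolve $\Sigma_d$ by a \emph{truncated} Vassiliev resolution: fix a generic embedding $\CP^m\hookrightarrow\R^L$ (so that no small set of its points is affinely dependent) and, over a tuple whose common zero scheme is $Z$, glue in the simplex spanned by the image of $Z$; then filter by the number of points of $Z$. The $r$-th filtration layer is an open $(r-1)$-simplex bundle over the configuration space of $r$ points of $\CP^m$, twisted fibrewise by the complex vector bundle of normalised tuples vanishing at those $r$ points, so a dimension count places it in real codimension $r(2n-2m+1)+1$ inside $V_d$. For this to be a genuine resolution with exactly these layers, one needs an interpolation estimate --- finite subschemes of $\CP^m$ of length at most $\lfloor\tfrac{d+1}{2}\rfloor$ must impose independent linear conditions on the normalised degree-$d$ forms --- and it is precisely this estimate that dictates the truncation at $r\le\lfloor\tfrac{d+1}{2}\rfloor$ and so produces the numerical factor in the theorem. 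By the codimension count and Alexander duality, the $r$-th layer first affects $\tilde H_k$ of $\Hol_d^*(\CP^m,\CP^n)$ in degree $k=r(2n-2m+1)$; since the first stratum omitted by the truncation is $r=\lfloor\tfrac{d+1}{2}\rfloor+1$, the truncated resolution computes $\tilde H_k\bigl(\Hol_d^*(\CP^m,\CP^n)\bigr)$ correctly for $k\le(2n-2m+1)\bigl(\lfloor\tfrac{d+1}{2}\rfloor+1\bigr)-1$.

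\emph{Step 4: comparison and conclusion.} Carrying out the parallel construction for $\Map_d^*(\CP^m,\CP^n)$ --- via the discriminant of continuous rational maps together with a density argument of Stone--Weierstrass type, or via a scanning/stabilisation argument as $d\to\infty$ --- yields a second spectral sequence into which the first maps, and a layer-by-layer inspection shows this map is an isomorphism on $E^1$ in all total degrees $\le(2n-2m+1)\bigl(\lfloor\tfrac{d+1}{2}\rfloor+1\bigr)-1$; hence $i_{d,\C}$ is a homology equivalence through that dimension. When $m<n$ both spaces are simply connected --- $\Hol_d^*(\CP^m,\CP^n)=V_d\setminus\Sigma_d$ because $\Sigma_d$ has complex codimension $\ge 2$, and $\Map_d^*(\CP^m,\CP^n)$ because it is $(2n-2m-1)$-connected ($\CP^n\to\CP^\infty$ being $2n$-connected and $\dim\CP^m=2m$) --- so the homology equivalence upgrades, via the relative Hurewicz and Whitehead theorems, to a homotopy equivalence through the same dimension; when $m=n$ the discriminant is only a hypersurface, $\pi_1$ need not be preserved, and only the homology statement survives. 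Step 1 then carries these conclusions back to the free mapping spaces. The main obstacle is Step 3: proving the interpolation estimate for subschemes of $\CP^m$ in the correct degree range, and checking that truncating the resolution there leaves the spectral sequence undisturbed below the stated dimension --- after that, identifying the $E^1$-terms with configuration-space homology and matching the two spectral sequences is comparatively formal.
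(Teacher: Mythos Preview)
The paper does not itself prove this theorem --- it is quoted from Mostovoy \cite{Mo2} as background, with the corrections of \cite{Mo3} discussed in a footnote. The paper does, however, prove the real analogue (Theorems \ref{thm: II} and \ref{thm: III}) by an adaptation of Mostovoy's method, so a comparison is still meaningful.

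Your overall architecture --- discriminant, Alexander duality, simplicial resolution, spectral-sequence comparison --- is Mostovoy's, but two structural choices diverge from the actual proof and create genuine difficulties.

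\emph{Restricted maps and induction on $m$.} Mostovoy does not attack the based discriminant directly. He fixes a hyperplane $\CP^{m-1}\subset\CP^m$ and a map $g$ on it, and works with tuples of polynomials extending a fixed representative of $g$; the full result is then obtained by induction on $m$ via the restriction fibration (compare the proof of Theorem \ref{thm: III} here). The reason is that common zeros of such restricted tuples are forced off the hyperplane (the analogue of Lemma \ref{lemma:common-root}), so the tautological normalization lives over $\C^m$ and the filtration layers are bundles over $C_r(\C^m)$. This is exactly what is needed to match Vassiliev's spectral sequence for $\Omega^{2m}S^{2n+1}\simeq F(m,n;g)$, whose $E^1$-terms are configuration-space homology over $\C^m$ (Lemma \ref{lemma: Va}). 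Your Step 3 resolves over $\CP^m$, and your Step 4 is correspondingly vague: there is no ready-made Vassiliev spectral sequence for $\Map_d^*(\CP^m,\CP^n)$ with $E^1$-terms over $C_r(\CP^m)$ to compare against.

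\emph{Source of the bound $\lfloor(d+1)/2\rfloor$.} This is not an interpolation estimate. Distinct points already impose independent conditions on degree-$d$ forms for $r\le d+1$ (Lemma \ref{lemma: simplex*}(i)). The halving appears because the resolution is built via the Veronese embedding and is \emph{degenerate}: to identify the $r$-th filtration layer with a bundle over $C_r(\C^m)$ one must recover the $r$ marked points from an interior simplex point, which requires any two $(r-1)$-simplices with Veronese vertices to meet only in a common face --- and that needs $2r\le d+1$ (see the proof of Lemma \ref{lemma: vector bundle*}). Moreover, as the paper's footnote records, controlling the layers beyond this range in the complex case requires the filtration by \emph{complex skeleta} introduced in \cite{Mo3}; your codimension count in Step 3 for large $r$ implicitly relies on a device of this sort that you have not supplied.

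A further subtlety flagged in the footnote: in the restricted setting the space of holomorphic maps and the space of polynomial tuples representing them are \emph{not} homeomorphic (this was an error in \cite{Mo2}); the corrected argument works with tuples throughout and only identifies the two after stabilisation.
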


\par\vspace{2mm}\par

A notable  feature of the argument in \cite{Mo2}  is that it  uses  real rather than complex methods. This suggests that a similar method could be applicable to proving a real analogue of Mostovoy's theorem. As explained in \cite{Se}, the original motivation for Segal's work on rational functions came from the real case. The first \lq\lq real analogue\rq\rq\  of Segal's theorem was proved  by Segal himself in the same work.  A different  real analogue of Segal's Theorem  was proved in \cite{Mo1}, \cite{Va} and  \cite{GKY2}.
It is this result that we generalize here. 

\par\vspace{2mm}\par

Unfortunately there were some gaps and errors in Mostovoy's published proof.  When the first version of this article was written they  still had not been corrected, but we have since obtained  from the author a new preprint \cite{Mo3} which we believe to be correct. Many ideas from Mostovoy's work play key roles in our work. The errors and gaps in the published version of \cite{Mo2} do not have a significant effect on the real case, which we are considering here. However, after seeing the revised version of \cite{Mo3} and following a number of suggestions of the referee we have made a number of changes, which enabled us to improve some of our results. \footnote{ A new version of the paper, currently only available from the author, appears to correct all the mistakes, with the main results remaining essentially unchanged. There are two major changes in the proofs. One is that the space $\Rat_f(p,q)$ of $(p,q)$ maps from $\CP^m$ to $\CP^n$  that restrict to a fixed map $f$ on a fixed hyperplane, used in section 2 of the published article is replaced by the space $\overline {\Rat}_f(p,q)$ of pairs of $n+1$-tuples of polynomials in $m$ variables that produce these maps. In the published version of the article it is assumed that these two spaces are homotopy equivalent, which is clearly not the case. However, they are homotopy equivalent after stabilisation, both being equivalent to $\Omega^{2m}\CP^n$ - the space of continuous maps that restrict to $f$ on a fixed hyperplane.
 The second important change is the introduction of a new filtration on the simplicial resolution $X^\Delta \subset \Bbb R^N\times Y $ of a map $h: X \to Y$ and an embedding $i: X \to \Bbb R^N = \Bbb C^{N/2}$. This filtration is defined by means of complex skeleta  (where the complex $k$-skeleton of a simplex in a complex affine space is the union of all its faces that are contained in complex affine subspaces of dimension at most $k$) and replaces the analogous \lq\lq real\rq\rq\ filtration in the arguments of section 4.}

\paragraph{Overview of our work.}
 In this paper we derive a real analogue of the above theorems. In general,  we study the inclusions of the form
$$
\Alg (\RP^m,\RP^n)\stackrel{\subset}{\longrightarrow} \Map (\RP^m,\RP^n)
$$
of the subspaces of algebraic maps in the space of continuous maps between \emph{real} projective spaces. Our results consist of three fairly independent parts:
\begin{itemize}
\item \textbf{Stable results.} We provide a fairly general stable result for the inclusion of algebraic maps in the space of all continuous maps between real algebraic varieties (we do not restrict to projective spaces in this part). This is the topic of Section \ref{section:stable} and the main result is the Stable Theorem \ref{thm: I}.
\item \textbf{Finite-dimensional approximation.} Next, we focus attention only on maps between real projective spaces, and we construct a family of finite dimensional spaces of polynomials which approximate the space of continuous maps more and more accurately. These spaces consist simply of collections of polynomials that represent algebraic maps of various (increasing) degrees. The relevant definitions and precise statement of results are contained in Sections \ref{section:notation1} and \ref{section:notation2}, while the results are proved in Section \ref{section 4}.
\item \textbf{Subspaces of algebraic maps.} Finally we consider the possibility of approximating the space of continuous maps by its subspaces of algebraic maps of a fixed degree, analogously to the results of Segal and Mostovoy. Such approximation in homology is deduced from the results of the previous part, along with a homology equivalence between the spaces of polynomials and spaces of maps they represent (see Theorem \ref{thm: A4}). These results are introduced in Sections \ref{section:notation1} and \ref{section:notation3} and the proofs are in Section \ref{section 5}. The Appendix describes an upgrade from homology equivalence to homotopy equivalence when the source space is one-dimensional.
\end{itemize}

\section{Stable real results.}
\label{section:stable}


We state the stable result of this section with much generality. Let $\Bbb K$ denote the fields $\R$ or $\C$ of real or complex
numbers.

Let $\G_{n,k}(\Bbb K)$ be the Grassmanian manifold
of $k$ dimensional $\Bbb K$-subspaces in $\Bbb K^n$.
For an affine real algebraic variety $X$, let
$\Alg (X,\G_{n,k}(\Bbb K))$ denote the space consisting of all algebraic
(i.e. regular) maps $f:X\to \G_{n,k}(\Bbb K)$ and $\Map(X,\G_{n,k}(\Bbb K) )$ the spaces of continuous maps.

\begin{thm}[{\bf Stable Theorem}]\label{thm: I}
Let $X$ be a compact affine real algebraic  variety, with the property that every topological $\Bbb K$-vector bundle of rank $k$ over $X$ is topologically isomorphic to an algebraic $\Bbb K$-vector bundle. 
Then the inclusion 
$i: \Alg (X,\Bbb \G_{n,k}(\Bbb K))\to \Map(X,\G_{n,k}(\Bbb K) )$ is a weak homotopy equivalence.
\end{thm}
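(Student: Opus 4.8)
The plan is to identify both the source and target of the inclusion map with spaces of vector bundles (up to homotopy), reducing the statement to the hypothesis that algebraic vector bundles realize all topological ones. The key observation is that $\Map(X,\G_{n,k}(\Bbb K))$ classifies rank-$k$ topological $\Bbb K$-vector subbundles of the trivial bundle $\underline{\Bbb K^n}$ over $X$ (equivalently, pairs consisting of a rank-$k$ bundle together with a surjection from $\underline{\Bbb K^n}$, or a bundle epimorphism $\underline{\Bbb K^n}\twoheadrightarrow E$), while $\Alg(X,\G_{n,k}(\Bbb K))$ classifies the algebraic version of the same data. So the real content is a comparison between algebraic and topological bundle data, of which the hypothesis handles the underlying bundles; one still must handle the "extra" data (the embedding into $\underline{\Bbb K^n}$, or the quotient map).

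**First I would** reduce to surjectivity and injectivity of $i_*$ on all homotopy groups, i.e. fix a finite CW complex $W$ (a sphere $S^r$, or $S^r$ with a disk attached for the relative version) and show every continuous map $W\to\Map(X,\G_{n,k}(\Bbb K))$ is homotopic to one factoring through $\Alg$, and that two algebraic maps that are continuously homotopic are algebraically homotopic after the homotopy is deformed. A continuous map $W\to\Map(X,\G_{n,k}(\Bbb K))$ is adjoint to a continuous map $F: W\times X\to\G_{n,k}(\Bbb K)$, classifying a rank-$k$ subbundle $\xi\subset\underline{\Bbb K^n}$ over $W\times X$. The trick will be to absorb $W$ into the source: since $W$ is a finite CW complex it embeds as a (compact) subset of some $\Bbb R^M$, and one can take an algebraic set approximating it — but more cleanly, one restricts attention to $W=S^r$ or $D^r$, which are themselves affine real algebraic varieties, and notes that $S^r\times X$ and $D^r\times X$ satisfy the bundle-realization hypothesis whenever $X$ does (this uses that $W$ is "algebraically trivial" for bundle purposes, e.g. $K$-theory of $W\times X$ is controlled; this is the point requiring care, and is where I expect to lean on results of Benedetti--Tognoli / Bochnak--Kucharz on algebraic vector bundles over products).

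**Next**, given the hypothesis applied to $W\times X$, the topological subbundle $\xi\subset\underline{\Bbb K^n}$ over $W\times X$ is topologically isomorphic to an algebraic bundle $\xi_{\mathrm{alg}}$; one must upgrade this to say the \emph{inclusion} $\xi\hookrightarrow\underline{\Bbb K^n}$ is, up to homotopy through subbundles, an algebraic inclusion. Here I would use that the space of (topological) bundle monomorphisms $\xi_{\mathrm{alg}}\hookrightarrow\underline{\Bbb K^n}$ is the space of sections of a bundle with fiber a Stiefel-type manifold $V_k(\Bbb K^n)$ — highly connected once $n\gg k$, but for the stable statement no connectivity is needed, only that algebraic sections of an algebraic bundle with fibers an affine variety are dense / exhaust the topological section space up to homotopy, which again follows from the Benedetti--Tognoli-type approximation theorems for algebraic maps once everything in sight is algebraic. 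Equivalently and more in the spirit of Grauert/Gromov "Oka principle": once the bundle is algebraic, the complement / quotient data can be algebraically approximated. Chaining these, $F$ is homotopic to an algebraic $F_{\mathrm{alg}}: W\times X\to\G_{n,k}(\Bbb K)$, whose adjoint $W\to\Alg(X,\G_{n,k}(\Bbb K))$ gives the desired lift; the relative case ($W=D^r$, controlling the boundary) is identical since $D^r\times X$ is still affine real algebraic and still inherits the hypothesis.

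**The hard part will be** the bundle-realization hypothesis for the products $W\times X$ (equivalently $S^r\times X$ and $D^r\times X$) rather than for $X$ alone — the theorem is stated with the hypothesis only on $X$, so one needs that it propagates to $S^r\times X$. This should follow because $S^r$ (resp. $D^r\simeq\ast$) contributes nothing new: every topological $\Bbb K$-bundle on $S^r\times X$ is, by a clutching/Mayer--Vietoris argument over the two hemispheres, assembled from bundles on $D^r\times X\simeq X$ which are algebraic by hypothesis, glued along $S^{r-1}\times X$ by a clutching function that can be algebraically approximated — so the real technical core is an algebraic-approximation statement for clutching functions valued in $\GL$, i.e. density of $\Alg(S^{r-1}\times X,\GL_k(\Bbb K))$ in $\Map(S^{r-1}\times X,\GL_k(\Bbb K))$ up to the relevant homotopy. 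Granting that (it is the kind of statement proved in Bochnak--Coste--Roy and Bochnak--Kucharz), the rest is a formal assembly, and the theorem follows.
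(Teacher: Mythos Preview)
Your strategy asks for more than is needed, and that overreach is exactly what forces you into the ``hard part'' you flag at the end. To show that $i_*$ is an isomorphism on $\pi_r$, you adjoint a map $S^r\to\Map(X,\G_{n,k}(\Bbb K))$ to $F:S^r\times X\to\G_{n,k}(\Bbb K)$ and then try to homotope $F$ to a map that is \emph{algebraic on the product} $S^r\times X$. But the adjoint of such an $F$ lands in $\Alg(X,\G_{n,k}(\Bbb K))$ as soon as each slice $F|_{\{w\}\times X}$ is algebraic; the dependence on $w\in S^r$ need only be continuous. So you only need to approximate $F$ by a \emph{continuous family of algebraic maps}, not by a single algebraic map on the product. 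This is what the paper does: cover $S^r$ by contractible open sets $U_j$ with a partition of unity $\rho_j$; on each $U_j\times X$ the pulled-back bundle is (topologically) pulled back from $X$ along the projection, hence by the hypothesis on $X$ alone is isomorphic to an algebraic bundle $P_j$ on $X$; then the sections $\rho_j\cdot s$ with $s$ algebraic in $P_j$ generate, via Stone--Weierstrass for vector bundles, a dense family approximating the original spanning sections. The result is a uniform approximation to $F$ that is algebraic slice-by-slice, and the hypothesis is never invoked on anything but $X$.

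By contrast, your route requires the bundle-realization hypothesis for $S^r\times X$, which is not part of the assumptions and which you propose to recover via a clutching argument. That argument is problematic: the clutching step needs that a continuous map $S^{r-1}\times X\to\GL_k(\Bbb K)$ can be homotoped to an algebraic one, which is a statement of the same shape and strength as the theorem you are trying to prove (with $\GL_k$ in place of $\G_{n,k}$), so the reasoning is close to circular. Even granting that, you then need to algebraize the inclusion $\xi\hookrightarrow\underline{\Bbb K^n}$ over the product, invoking further approximation theorems. None of this machinery is needed once you notice that only fiberwise algebraicity is required; the partition-of-unity trick on the parameter space replaces all of it.
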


Note that the assumption of Theorem \ref{thm: I} is satisfied for 
the  $\K$-projective space $X=\K\P^m$.
%
%
We start by reminding the definition of 
an \lq\lq algebraic map\rq\rq\, usually referred to in algebraic geometry as a \lq\lq  regular\rq\rq\  map.

\par\vspace{2mm}
\par\noindent{\bf Definition. }
Let $V\subset \K^n$ be an algebraic subset and  $U$ be a (Zariski) open  
subset of $V.$ We say that a function $f:U \to \K$ is 
{\it an regular function on} $U$ 
if it can be written as the quotient of two polynomials $f=g/h$, with $h^{-1}(0)\cap U=\emptyset$. 
For a subset $W\subset \K^p$, a map $\varphi :U\to W$ is called
{\it an algebraic map} if its coordinate functions are regular functions.

\par\vspace{2mm}\par
Clearly regular functions on an algebraic set  form a sheaf. 
An  regular map between two algebraic sets is 
one that induces a map of sheaves or regular functions (is a morphism of ringed spaces).  
These definitions extend  in a natural way to abstract algebraic varieties, 
and in particular to projective varieties.  
An algebraic map between real or complex algebraic varieties 
is a continuous map (with respect to the complex or real topology) and a $C^\infty$ map in the case of smooth varieties.
\par

To state our stable results  in the most general form we need the concept 
of an algebraic $\Bbb K$-vector bundle 
over a real algebraic variety.
\par
Recall (\cite{BCR}) that {\it a pre-algebraic vector bundle} over a real algebraic variety $X$ is a triple $\xi =(E,p,X)$, such that $E$ is a real algebraic variety, $p:E \to X$ is an algebraic map, the fiber over each point is a $\Bbb K$-vector space and there is a covering of the base $X$ by Zariski open sets over which the vector bundle $E$ is biregularly isomorphic to the trivial bundle.
{\it An algebraic vector bundle} over $X$ 
is a pre-algebraic vector bundle which is algebraically isomorphic to a pre-algebraic vector sub-bundle of a trivial bundle. 
Most of the usual vector bundle constructions when performed on algebraic vector  bundles give rise to algebraic vector bundles;  in particular the Whitney sum, and the tensor product of two algebraic bundles is algebraic, so is the pull-back of an algebraic bundle by a regular map between real algebraic varieties. 
Important examples of algebraic vector bundles are the universal vector bundles over the Grassmanian manifold 
$\Bbb G_{n,k}(\Bbb K)$ of all $k$ dimensional $\K$-subspace in
$\K^n$
and their complementary bundles. 

\begin{lemma}[\cite {BCR}]
\label{lemma: A}
 A topological vector bundle over a real algebraic variety $X$, which is stably isomorphic to an algebraic vector bundle, is topologically isomorphic to an algebraic vector bundle.
\end{lemma}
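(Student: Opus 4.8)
The plan is to deduce this from the corresponding algebraic analogue of the fact that a vector bundle which becomes trivial after adding a trivial bundle is a direct summand of a trivial bundle, together with the standard homotopy-theoretic trick of \emph{splitting off a stable complement}. Suppose $\xi$ is a topological $\K$-vector bundle over $X$ such that $\xi \oplus \varepsilon^N$ is topologically isomorphic to an algebraic vector bundle $\eta$, where $\varepsilon^N$ denotes the trivial bundle of rank $N$ (that $\xi$ is \emph{stably} isomorphic to an algebraic bundle means precisely this, after possibly enlarging $N$, since any algebraic bundle is an algebraic summand of a trivial bundle). The first step is to observe that, since $X$ is compact (hence of finite covering dimension), there is a topological bundle $\xi^{\perp}$ with $\xi \oplus \xi^{\perp} \cong \varepsilon^M$ for some $M$; thus $\eta \oplus \xi^{\perp} \cong \xi \oplus \xi^{\perp} \oplus \varepsilon^N \cong \varepsilon^{M+N}$, so \emph{some} topological complement of $\eta$ inside a trivial bundle is isomorphic to $\xi^{\perp}$.

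The key step is then to replace this topological complement by an algebraic one. Here one uses that $\eta$, being algebraic, is an algebraic subbundle of some trivial bundle $\varepsilon^{L}$; the orthogonal complement $\eta^{\perp}$ of $\eta$ in $\varepsilon^{L}$ (with respect to a fixed algebraic inner product on $\varepsilon^L$) is again an algebraic vector bundle, and $\eta \oplus \eta^{\perp} \cong \varepsilon^{L}$ algebraically. Now $\eta^{\perp}$ is a topological complement of $\eta = \xi \oplus \varepsilon^N$ in $\varepsilon^L$, so $\eta^{\perp} \oplus \varepsilon^N$ is a topological complement of $\xi$ in $\varepsilon^L$; comparing with the complement $\xi^{\perp}$ and stabilising, we get $\xi \oplus (\eta^{\perp} \oplus \varepsilon^N) \cong \varepsilon^L$ topologically. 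Iterating this construction, or arguing directly, produces an algebraic bundle $\zeta$ (a sum of iterated orthogonal complements and trivial bundles) with $\xi \oplus \zeta \cong \varepsilon^{K}$ topologically for suitable $K$.

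It remains to upgrade the topological isomorphism $\xi \oplus \zeta \cong \varepsilon^K$ to the statement that $\xi$ itself admits an algebraic structure. For this, classify the topological subbundle $\xi \subset \varepsilon^K$ by its classifying map $X \to \G_{K, k}(\K)$ (the image of the fibrewise inclusion), and approximate this classifying map by an algebraic (regular) map $X \to \G_{K,k}(\K)$ using the algebraic approximation results for maps into Grassmannians over a compact affine real algebraic variety; the pullback of the universal algebraic subbundle along this regular map is an algebraic vector bundle topologically isomorphic to $\xi$. I expect the main obstacle to be exactly this last point: making precise that a topological bundle whose classifying map into a Grassmannian can be approximated by a regular map is itself algebraic, i.e. controlling the interaction between the approximation of maps and the isomorphism type of the pulled-back bundle — this is where compactness of $X$ and the finite-dimensionality of the Grassmannian model are essential, and where one must invoke the relevant results of \cite{BCR} on algebraic approximation and on algebraic vector bundles over compact affine real algebraic varieties.
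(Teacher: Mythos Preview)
The paper does not give a proof of this lemma; it simply cites \cite{BCR}. So there is no ``paper's own proof'' to compare against directly, but one can compare with the standard argument in \cite{BCR}.

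Your reduction steps are fine up to the point where you obtain an algebraic bundle $\zeta$ with $\xi\oplus\zeta\cong\varepsilon^{K}$ topologically. The gap is in the last step. You propose to approximate the classifying map $X\to\G_{K,k}(\K)$ of $\xi\subset\varepsilon^{K}$ by a regular map and then pull back. But density of regular maps among continuous maps $X\to\G_{K,k}(\K)$ is exactly Theorem~\ref{thm: B}, and in this paper (and in \cite{BCR}) Theorem~\ref{thm: B} is proved \emph{using} Lemma~\ref{lemma: A} as an ingredient; moreover Theorem~\ref{thm: B} carries the hypothesis that \emph{every} topological bundle of the relevant rank is already algebraic, which is strictly stronger than what you are trying to establish about the single bundle $\xi$. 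So invoking ``algebraic approximation results for maps into Grassmannians'' here is circular, and there is no hypothesis-free version of that density available to fall back on. Note also that your earlier manipulations, taken by themselves, only reproduce the stable hypothesis (you end up with $\xi\oplus\varepsilon^{L}$ isomorphic to an algebraic bundle again), so the approximation step really is where all the content must sit.

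The argument in \cite{BCR} avoids this circularity by working with \emph{sections} rather than classifying maps. One argues by induction on $N$, reducing to the case $\xi\oplus\varepsilon^{1}\cong\eta$ with $\eta$ algebraic. The trivial summand gives a nowhere-vanishing continuous section $s$ of $\eta$; by the Stone--Weierstrass theorem for sections of an algebraic vector bundle (which holds for any algebraic bundle over a compact affine real variety, with no extra hypotheses on $X$), one approximates $s$ by an algebraic section $s'$, still nowhere-vanishing by compactness. The orthogonal complement of $s'$ in $\eta$ with respect to an algebraic inner product is then an algebraic subbundle topologically isomorphic to $\xi$. This is the missing idea you need in place of the Grassmannian approximation.
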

The above lemma and the Stone-Weierstrass theorem are the main ingredients in proving the following result:
\begin{thm}[\cite{BCR}]\label{thm: B}
Let $X$ be a compact affine real algebraic variety, with the property that every topological $\Bbb K$-vector bundle of rank $k$ over $X$ is topologically isomorphic to an algebraic $\Bbb K$-vector bundle. 
Then the space of algebraic mappings $\Alg (X,\Bbb \G_{n,k}(\Bbb K))$ is dense in $\Map(X,\G_{n,k}(\Bbb K))$.
\end{thm}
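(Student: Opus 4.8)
The plan is to deduce Theorem~\ref{thm: B} from Theorem~\ref{thm: I} together with the Stone--Weierstrass theorem, or rather to observe that Theorem~\ref{thm: B} is logically the \emph{input} to Theorem~\ref{thm: I}; so I will prove it directly. First I would recall the standard description of continuous maps into a Grassmannian: a continuous map $f:X\to\G_{n,k}(\K)$ is the same datum as a topological $\K$-vector subbundle of rank $k$ of the trivial bundle $\underline{\K^n}=X\times\K^n$, namely the pullback $f^*\gamma_{n,k}$ of the universal subbundle, together with the embedding $f^*\gamma_{n,k}\hookrightarrow\underline{\K^n}$ induced by the tautological inclusion $\gamma_{n,k}\subset\underline{\K^n}$ over $\G_{n,k}(\K)$. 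Conversely, a rank-$k$ subbundle $\eta\subset\underline{\K^n}$ determines a classifying map $X\to\G_{n,k}(\K)$, $x\mapsto\eta_x\subset\K^n$, and this map is algebraic precisely when $\eta$ is an algebraic subbundle of the trivial algebraic bundle $\underline{\K^n}$.

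The argument then runs as follows. Fix $f\in\Map(X,\G_{n,k}(\K))$ and an open neighbourhood $\mathcal{U}$ of $f$ in the compact-open topology (which, since $X$ is compact, is induced by a metric on $\Map(X,\G_{n,k}(\K))$). By the hypothesis on $X$, the topological bundle $f^*\gamma_{n,k}$ is topologically isomorphic to an \emph{algebraic} $\K$-vector bundle $\xi$ of rank $k$. By Lemma~\ref{lemma: A} (or directly, since an algebraic bundle is by definition an algebraic subbundle of a trivial bundle) $\xi$ may be realized as an algebraic subbundle of some trivial algebraic bundle $\underline{\K^M}$. The subtlety is that $M$ need not equal $n$ and the embedding need not be the given topological one $f^*\gamma_{n,k}\hookrightarrow\underline{\K^n}$; this is where the $X$-hypothesis, stabilisation, and Stone--Weierstrass all enter. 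I would use the projection $p:\underline{\K^M}\to\xi$ (orthogonal, or Hermitian, projection along the algebraic metric) which is a matrix of regular functions on $X$; the topological isomorphism $f^*\gamma_{n,k}\cong\xi$ lets me transport the topological inclusion $f^*\gamma_{n,k}\hookrightarrow\underline{\K^n}$ to a topological bundle monomorphism $\xi\hookrightarrow\underline{\K^n}$, i.e. an $n\times M$ matrix $A$ of continuous functions on $X$ such that $A$ restricted to each fiber $\xi_x$ is injective. By Stone--Weierstrass, approximate the entries of $A\circ p$ uniformly by regular functions to obtain an $n\times M$ matrix $B$ of regular functions on $X$; for a sufficiently good approximation $B|_{\xi_x}$ remains injective for all $x$ (here compactness of $X$ is essential), so $B(\xi)\subset\underline{\K^n}$ is an algebraic subbundle of rank $k$, hence defines an algebraic map $g:X\to\G_{n,k}(\K)$, $x\mapsto B(\xi_x)$. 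Finally, since the line/plane $B(\xi_x)$ is close to the topological subspace $A(\xi_x)=f(x)$ uniformly in $x$, the map $g$ lies in the prescribed neighbourhood $\mathcal{U}$ of $f$; this gives density.

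The main obstacle is the passage from the abstract algebraic bundle $\xi$ to an algebraic \emph{sub}bundle of the \emph{specific} trivial bundle $\underline{\K^n}$ of the correct rank, in such a way that the resulting classifying map is $C^0$-close to $f$. One cannot simply approximate the original topological inclusion $f^*\gamma_{n,k}\hookrightarrow\underline{\K^n}$ by a regular bundle map, because the source $f^*\gamma_{n,k}$ is only a topological bundle and has no regular functions to speak of; one must first replace it by the algebraic model $\xi$, and then the embedding into $\K^n$ has to be produced by perturbation. The device that makes this work is that a fiberwise-injective bundle map is an open condition (again using compactness of $X$), so any sufficiently close regular approximation still has constant rank $k$ and hence image an honest subbundle. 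I would be careful to note that the existence of \emph{some} algebraic bundle $\xi$ topologically isomorphic to $f^*\gamma_{n,k}$ is exactly the standing hypothesis on $X$, and that the realization of an algebraic bundle as a subbundle of a trivial algebraic bundle is built into the definition of ``algebraic vector bundle'' recalled just above; Lemma~\ref{lemma: A} is what guarantees the hypothesis is not vacuous and is stable, but the proof of density itself is the approximation argument sketched here.
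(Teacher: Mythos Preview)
The paper does not actually prove Theorem~\ref{thm: B}; it is quoted from \cite{BCR} as a known result, with the remark that Lemma~\ref{lemma: A} and the Stone--Weierstrass theorem are the main ingredients. The closest thing the paper proves is Proposition~\ref{prop:D}, the parametrized version, whose specialisation to $Y=\{\mathrm{pt}\}$ gives a proof of Theorem~\ref{thm: B}.

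Your argument is correct and is one of the standard routes. It is worth comparing it with the paper's proof of Proposition~\ref{prop:D}, which uses the \emph{dual} description: a continuous map $X\to\G_{n,k}(\K)$ is a rank-$k$ bundle $E$ on $X$ together with $n$ continuous sections spanning every fibre (equivalently, a surjection $\underline{\K^n}\twoheadrightarrow E$). One then replaces $E$ by its algebraic model~$\xi$ using the hypothesis on $X$, and approximates the $n$ continuous sections of $\xi$ by algebraic sections via the Stone--Weierstrass theorem \emph{for sections of vector bundles} \cite{BCR}; spanning is an open condition, so a close enough approximation still spans. Your approach via subbundles---realising $\xi$ inside some $\underline{\K^M}$, transporting the topological inclusion $f^*\gamma_{n,k}\hookrightarrow\underline{\K^n}$ to $\xi$, and then approximating the resulting $n\times M$ matrix of continuous functions by a regular one---is equivalent, and the verification that $x\mapsto B(\xi_x)$ is regular is straightforward. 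The ``sections'' formulation used in the paper has the minor advantage that the Stone--Weierstrass step acts directly on sections of the already-algebraic bundle~$\xi$, so the auxiliary embedding $\xi\subset\underline{\K^M}$ and the projection $p$ never enter; your formulation, on the other hand, uses only the scalar Stone--Weierstrass theorem rather than the vector-bundle version.
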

Note that we are considering $\Bbb G_{n,k}(\Bbb C)$ as a real affine algebraic variety (see  \cite{BCR}).
Using essentially the same method as in \cite{BCR} and an idea from \cite{Mo2} we will prove our  Theorem \ref{thm: I}, which asserts that, under the assumptions of Theorem \ref{thm: B}, the space of algebraic maps is not only dense in the space of continuous maps but is also homotopy equivalent to it.

Note that both spaces $\Bbb \RP^m$ and $\Bbb  \CP^m$ satisfy 
the assumption  of  Theorem \ref{thm: B}.
The real case, that is the most important for us here,  follows from  Lemma \ref{lemma: A}  
and from the fact the $\tilde {KO}(\RP^m)$ is generated by the class of a unique non-trivial line bundle (\cite{Huse}, Theorem 12.7).  
The other cases follow form known results about $\tilde {KO}(\CP^m)$, $\tilde {K}(\CP^m)$ and $\tilde {K}(\RP^m)$.  

The assumption of Theorem \ref{thm: I} is not satisfied in general, but it is known that for every compact smooth manifold $M$, there exists a non-singular real algebraic variety $X$ diffeomorphic to $M$ such that every topological vector bundle over $X$ is isomorphic to a real algebraic one (\cite{BCR}). 

To prove Theorem  \ref{thm: I} we use the following Proposition: 

\begin{prop}\label{prop:D}
Let $X$ be as in Theorem \ref{thm: B} and let $Y$ be a finite CW-complex. 
Let $F: Y\times X \to \Bbb G_{n,k}(\Bbb K)$ be a continuous map. 
Then $F$ can be approximated uniformly (with respect to some metric on $ \Bbb G_{n,k}(\Bbb K)$) by maps 
$G: Y\times X \to \Bbb G_{n,k}(\Bbb K)$, such that the restriction 
$G_y:\{y\}\times X \to \Bbb G_{n,k}(\Bbb K)$ is a algebraic map for each $y\in Y$. 
\end{prop}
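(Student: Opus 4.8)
The plan is to imitate the proof of Theorem \ref{thm: B} (the Bochnar--Coste-Roy--Roy density result), but carried out in a \emph{parametrized} fashion over the finite CW-complex $Y$. First I would translate the statement about maps into $\G_{n,k}(\K)$ into a statement about vector bundles. A continuous map $F:Y\times X\to\G_{n,k}(\K)$ is the same as a continuous $\K$-vector subbundle $\eta\subset \underline{\K^n}$ of rank $k$ over $Y\times X$ (namely the pullback $F^*\gamma_{n,k}$ of the tautological bundle), together with its classifying data; conversely, a map $G$ with each restriction $G_y$ algebraic is essentially a bundle over $Y\times X$ that is ``fiberwise-over-$Y$ algebraic'' on $X$. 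So I want: given the topological subbundle $\eta\hookrightarrow\underline{\K^n}$ over $Y\times X$, find a nearby map $G$ for which the restriction over each slice $\{y\}\times X$ is algebraic.

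The key technical step is a \emph{parametrized Stone--Weierstrass / approximation} argument. Since $Y$ is a finite CW-complex, I would proceed by induction on the cells of $Y$. Over a point the result is exactly Theorem \ref{thm: B} (together with the bundle-theoretic Lemma \ref{lemma: A}): the hypothesis on $X$ guarantees that the relevant topological bundle over $X$ is topologically isomorphic to an algebraic one, and then Stone--Weierstrass lets us approximate the classifying map (equivalently, a smooth/continuous family of $k$-frames or projections) by regular functions. For the inductive step, suppose $G$ has been constructed over $Y'\times X$ where $Y'\subset Y$ is a subcomplex, and we attach a cell $e^j$ via $\phi:S^{j-1}\to Y'$. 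On $\overline{e^j}\times X\cong D^j\times X$ we have the original $F$ and, on the boundary $S^{j-1}\times X$, the already-constructed slice-algebraic approximation; using that $D^j\times X$ is (after an algebraic model) again of the required type, and using a relative version of the Stone--Weierstrass argument (approximating rel the boundary, where we already have regular data), we extend $G$ over the cell while keeping each slice $G_y$ algebraic and keeping the approximation uniformly close. The compactness of $X$ and finiteness of $Y$ make the various $\epsilon$'s uniform. One should also note that $\G_{n,k}(\C)$ is being regarded as a real affine algebraic variety, so the $\K=\C$ case is subsumed in the real framework.

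I expect the main obstacle to be making the ``relative'' (rel-boundary) approximation precise: one must approximate a continuous family of projections (or $k$-planes) over $D^j\times X$ by a family that is polynomial in the $X$-variables for every fixed $y\in D^j$, \emph{agreeing} with a given slice-polynomial family on $S^{j-1}\times X$, while staying inside the open set of rank-$k$ projections (so that the map into $\G_{n,k}(\K)$ is well defined) and staying uniformly close to $F$. The honest way to handle this is to work with the associated algebraic vector bundle over an algebraic model of $D^j\times X$ (invoking Lemma \ref{lemma: A} to replace a stably-algebraic bundle by an algebraic one), trivialize it Zariski-locally, patch with an algebraic partition-of-unity-type argument, and push the boundary condition through by a standard collar/interpolation trick between the given boundary data and the newly produced interior data — controlling norms so nothing leaves the open locus of idempotents of rank $k$. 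Once this relative statement is in place, the cell-by-cell induction and the passage back from bundles/projections to maps into the Grassmannian are routine, and the uniform estimate follows by compactness.
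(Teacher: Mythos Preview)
Your overall translation into bundle language is correct, but the route you take---induction on cells of $Y$ with a relative approximation at each step---is different from the paper's and introduces a difficulty the paper sidesteps entirely.

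The paper does not induct on cells. It takes a finite \emph{open contractible cover} $\{U_k\}$ of $Y$ with a subordinate partition of unity $\{\rho_k\}$. Over each $U_k\times X$ the bundle $E_F$ is, by contractibility of $U_k$ and the hypothesis on $X$, isomorphic to the pullback of an algebraic bundle $P_k$ on $X$. One then forms the family $\Sigma$ of global sections of $E_F$ consisting of all $\rho_k\cdot s$ with $s$ an algebraic section of $P_k$, extended by zero off $U_k\times X$, and applies the Stone--Weierstrass theorem \emph{for vector bundles} once, globally, to approximate the $n$ spanning sections defining $F$. For each fixed $y$ a section in $\Sigma$ is a finite $\R$-linear combination (with coefficients $\rho_k(y)$) of algebraic sections of the $P_k$, hence algebraic; so the approximating map $G$ is slice-algebraic automatically. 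No relative or boundary-matching argument is needed.

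Your inductive step, by contrast, has two genuine gaps. First, the assertion that ``$D^j\times X$ is (after an algebraic model) again of the required type'' is not justified: $D^j$ is not a real algebraic variety, so neither Lemma~\ref{lemma: A} nor Theorem~\ref{thm: B} applies to $D^j\times X$ as stated. Second, your collar interpolation must preserve slice-algebraicity. If you interpolate projection matrices and then round back to the Grassmannian, the rounding map is not regular in the $X$-variables; if instead you interpolate spanning sections (which does stay algebraic, being linear), you must arrange that the algebraic-bundle identifications used over the old cells and over the new cell agree on the attaching region---precisely the compatibility bookkeeping that the partition-of-unity argument absorbs for free. Your proposal acknowledges this obstacle but does not resolve it; the paper's approach simply avoids it.
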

\begin{proof}
We imitate the method of proof of Lemma 4 of \cite{Mo2}.
Elements of  $\Map(Z,\Bbb G_{n,k}(\Bbb K))$  are in one to one correspondence with with  objects of the form:  $k$-dimensional $\K$-bundle $E$ on $Z$ together with  $n$ sections, which, at each point $z$ of  $Z$ span the fiber $E_z$ of the bundle. Let $f: Y\times X \to \Bbb G_{n,k}(\Bbb K)$ be a continuous map and denote by $E_f$ and $s_1,\dots,s_n$ the the corresponding bundle on $Y\times X$  and sections spanning the fiber at each point.  
Since $Y$ is a finite complex,
we can choose an open contractible cover 
$\{U_{k}\}_{k=1}^l$ of $Y$
and and associated partition of unity 
$\{\rho_{k}:Y\to \R\}_{k=1}^l$.  
Since $U_k$ is contractible, because of the assumption on $X$, 
for each $k$ there is an algebraic bundle $P_k$ on $X$ 
such that the restriction of $E_f$ to $U_k\times X$ is isomorphic 
to a bundle induced from $P_k$ by the projection map. 
We can thus identify each such restriction with $P_k$. 
Let $\Sigma$ denote the family of sections of $E_f$ over $Y\times X$ 
generated by all sections of the form $\rho_{k} s$ in $U_k$, 
where $s$ is an algebraic section of $P_k$, 
extended to the whole of $Y\times X$ by $0$. 
It is easy to see that the family of sections 
$\Sigma$ satisfies the conditions of the Stone-Weierstrass theorem 
for  vector bundles \cite{BCR} so each section $s_i$ 
can be uniformly approximated by algebraic sections. 
By choosing sections close enough to the $s_i$ 
we can choose approximations 
consisting of $n$ algebraic sections spanning the fiber. 
These sections define the required algebraic map 
$Y\times X \to \Bbb G_{n,k}(\Bbb K)$. 
\end{proof}

\par\noindent{\bf Remark. }
If $X$ is $\RP^n$ or $\CP^n$ then essentially the  argument reduces to the one given in Lemma 2.2 of \cite{Mo2}. 
\par\vspace{2mm}\par

\begin{proof}[Proof of  Theorem  \ref{thm: I}]
Let $N$ be any positive integer. 
It suffices to show that the induced homomorphism
$i_*:\pi_N(\Alg(X,\Gr_{n,k}(\Bbb K)))\to \pi_N(\Map(X,\Gr_{n,k}(\Bbb K) ))$ is an isomorphism. 
 First, we will show that $i_*$ is injective. 
 Let  $\alpha = [F]\in \pi_N(\Map(X,\Gr_{n,k}(\Bbb K)))$ be any element and 
 let $\tilde F : S^N\times X \to \Gr_{n,k}(\Bbb K) $ be the adjoint of the representative map $F$. 
By Proposition \ref{prop:D}  
there is a uniform approximation to $\tilde F$,
  $\tilde G : S^N\times X\to \Gr_{n,k}(\Bbb K) $, 
  such that $\tilde G\vert X\times  \{s\}$ is a algebraic map for each $s\in S^N$.  
  Let $G: S^N \to \Alg(X,\Gr_{n,k}(\Bbb K)) $ denote the adjoint of $\tilde G$. Clearly, $i_*([G])=\alpha$. Hence $i_*$ is  surjective. 

Next, we show that $i_*$ is injective. 
For this purpose, suppose that $i_*(\beta_0) = i_*(\beta_1)$ for 
$\beta_l \in \pi_N(\Alg(X,\Gr_{n,k}(\Bbb K)))$ $(l=0,1)$. 
Let $\beta_l = [F_l]$ and let ${\tilde F}_l$  denote the adjoint of $F_l$ 
for $l \in \{0,1\}$. There is a homotopy 
$\Phi: X \times S^N \times [0,1] \to X$ such that 
$\Phi| X\times S^N \times \{l\} = {\tilde F}_k$ for $l \in \{0,1\}$. 

Then by Proposition \ref{prop:D} there is a uniform approximation $\Psi: X\times S^N\times [0,1] \to X$ to $\Phi$ such that $\Psi|X\times \{(s,t)\}$ is a algebraic map for any $(s,t)\in S^N\times [0,1]$. Hence, $\beta_0=\beta_1$ and $i_*$ is injective. 
\end{proof}


\section{Notation and introduction of unstable results.}
For our \lq\lq unstable\rq\rq\  results we limit ourselves to the case where on both sides  we have real projective spaces.

\subsection{Notation.}
\label{section:notation1}
Let $m$ and $n$ be positive integers such that $m<n$, and  
let $z_0,\cdots ,z_m$ denote variables.
We choose the point ${\bf e}_k=[1:0:\cdots :0]\in \RP^k$ as the base point of $\RP^k$. We also let $\RP^{k-1}\subset \RP^{k}$ denote the fixed subspace defined by the equation $z_k=0$.

We now need to define three types of spaces: spaces of continuous maps, spaces of algebraic maps and spaces of collections of polynomials representing the algebraic maps. Each of them comes in three flavors: free, basepoint-preserving and restricted. That means we are going to have $3\times 3=9$ types of spaces altogether.

\paragraph{Continuous maps. }
We denote by $\Map(\RP^m,\RP^n)$ and $\Map^*(\RP^m,\RP^n)$, respectively, the space of all continuous maps $f:\RP^m\to \RP^n$ and its subspace of basepoint-preserving maps which satisfy $f({\bf e}_m)={\bf e}_n$.

For $\epsilon \in \Z/2=\{0,1\}=
\pi_0(\Map(\RP^m,\RP^n))$, 
let $\Map_{\epsilon}(\RP^m,\RP^n)$ and 
$\Map_{\epsilon}^*(\RP^m,\RP^n)$
be the corresponding path components of these spaces.

Given a map $g\in\Map_{\epsilon}^*(\RP^{m-1},\RP^n)$, we can consider the space of all maps whose restriction to the fixed subspace coincides with $g$:
$$
F(m,n;g) =
\{f\in \Map_{\epsilon}^*(\RP^m,\RP^n):f\vert{\RP^{m-1}}=g\}.
$$
This is what we call the space of \emph{restricted maps}.
Now consider the restriction map
$r:\Map_{\epsilon}^*(\RP^m,\RP^n)\to
\Map_{\epsilon}^*(\RP^{m-1},\RP^n)$
given by the restriction $r(f)=f\vert \RP^{m-1}$.
It can be easily seen that 
there is a restriction fibration sequence 
$$
F(m,n;g)\hookrightarrow\Map_{\epsilon}^*(\RP^m,\RP^n)
\stackrel{r}{\longrightarrow}
\Map_{\epsilon}^*(\RP^{m-1},\RP^n).
$$
Observe that there is a homotopy equivalence
$F(m,n;g)\simeq  \Omega^mS^n$.

\paragraph{Algebraic maps. }
A map $f:\RP^m\to \RP^n$ is called {\it an algebraic map of degree} $d$
if it has a representation $f=[f_0:\cdots :f_n]$ where
$f_0,\cdots ,f_n\in \R [z_0,z_1,\cdots ,z_m]$
are homogenous polynomials of the same degree $d$
with no common {\it real} root other than
${\bf 0}_{m+1}=(0,\cdots ,0)\in\R^{m+1}$
(but possibly with common {\it non-real} roots).
The space of all such maps will be denoted $\Alg_d(\RP^m,\RP^n)$, while $\Alg_d^*(\RP^m,\RP^n)$ will be its subspace consisting of basepoint-preserving maps.

Note that we always have $\Alg_d(\RP^m,\RP^n) \subset \Alg_{d+2}(\RP^m,\RP^n)$
and $\Alg_d^*(\RP^m,\RP^n) \subset \Alg_{d+2}^*(\RP^m,\RP^n)$, 
because
$[f_0:f_1:\cdots :f_n]= 
[g_{m} f_0:g_{m} f_1:\cdots :g_{m} f_n]$, 
where $g_{m} = \sum_{k=0}^mz_k^2$. In fact, we could take $g_{m}$ to be any polynomial of degree $2$ with no nontrivial real root.
Clearly, we have an inclusion
$$j_{d,\R}:\Alg_d(\RP^m,\RP^n)\to\Map_{[d]_2}(\RP^m,\RP^n)$$
where $[d]_2\in \{0,1\}$ denotes the integer $d$ mod $2$. This map restricts to an inclusion
$$i_{d,\R}:\Alg_d^*(\RP^m,\RP^n)\to\Map_{[d]_2}^*(\RP^m,\RP^n).$$

In these terms the Stable Theorem \ref{thm: I} can be expressed as follows.
\begin{cor}
If $1\leq m< n$ and $\epsilon=0$ or $1$, the inclusion map
$$\bigcup_{k=1}^{\infty}\Alg_{\epsilon+2k}(\RP^m,\RP^n) \to \Map_{\epsilon}(\RP^m,\RP^n)$$
is a homotopy equivalence. The same holds for spaces of based maps.
\end{cor}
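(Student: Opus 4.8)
The plan is to derive the corollary as a direct reformulation of the Stable Theorem \ref{thm: I}. Apply that theorem with $X=\RP^m$, which satisfies its hypothesis (as noted immediately after its statement), and with target Grassmannian $\G_{n+1,1}(\R)$, the variety of lines in $\R^{n+1}$, which is precisely $\RP^n$ regarded as an affine real algebraic variety. Theorem \ref{thm: I} then says at once that the inclusion
$$
i\colon\Alg(\RP^m,\RP^n)\longrightarrow\Map(\RP^m,\RP^n)
$$
of the space of \emph{all} regular maps into the space of all continuous maps is a weak homotopy equivalence. What remains is to match this with the corollary, which I would do in three steps: identify $\Alg(\RP^m,\RP^n)$ with $\bigcup_{d\ge1}\Alg_d(\RP^m,\RP^n)$; keep track of path components; and upgrade "weak homotopy equivalence" to "homotopy equivalence".

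For the first step, a tuple $[f_0:\cdots:f_n]$ of homogeneous polynomials of one common degree with no common real root evidently defines a regular map, so $\bigcup_{d\ge1}\Alg_d(\RP^m,\RP^n)\subseteq\Alg(\RP^m,\RP^n)$. The reverse inclusion is the classical fact that every regular map $\RP^m\to\RP^n$ admits such a homogeneous representation: restrict the map to the affine chart $\{z_0\neq0\}\cong\R^m$, clear a positive-definite common denominator to get a tuple of polynomials in $z_1/z_0,\dots,z_m/z_0$ without common real zero on $\R^m$, homogenize to the common degree, and use the regularity of the map along the hyperplane at infinity $\{z_0=0\}$ — together with the freedom to multiply every coordinate by $g_m=\sum_{k=0}^m z_k^2$, which changes neither the map nor, by the remark in Section \ref{section:notation1}, its degree modulo $2$ — to arrange that the homogeneous coordinates have no nontrivial common real zero. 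Hence $\Alg(\RP^m,\RP^n)=\bigcup_{d\ge1}\Alg_d(\RP^m,\RP^n)$. Now $\Alg_d\subseteq\Alg_{d+2}$ and $j_{d,\R}(\Alg_d)\subseteq\Map_{[d]_2}$, and $\pi_0\Map(\RP^m,\RP^n)=\Z/2=\{0,1\}$ as recorded in Section \ref{section:notation1}; it follows that $\bigcup_{k=1}^{\infty}\Alg_{\epsilon+2k}(\RP^m,\RP^n)$ equals $\bigcup_{d\equiv\epsilon\,(2)}\Alg_d(\RP^m,\RP^n)$, which is exactly the full preimage $i^{-1}\bigl(\Map_{\epsilon}(\RP^m,\RP^n)\bigr)$ — the union of those path components of $\Alg(\RP^m,\RP^n)$ lying over the path component $\Map_{\epsilon}(\RP^m,\RP^n)$.

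A weak homotopy equivalence is a bijection on $\pi_0$ and an isomorphism on every higher homotopy group at every basepoint, hence restricts to a weak homotopy equivalence between the preimage of any path component of the target and that component. Applying this to $i$ over $\Map_{\epsilon}(\RP^m,\RP^n)$ shows that the inclusion $\bigcup_{k=1}^{\infty}\Alg_{\epsilon+2k}(\RP^m,\RP^n)\to\Map_{\epsilon}(\RP^m,\RP^n)$ is a weak homotopy equivalence. To upgrade this to a homotopy equivalence I would invoke Whitehead's theorem after checking that both sides have the homotopy type of a CW complex: the target does, by Milnor's theorem, since $\RP^m$ is a finite complex and $\RP^n$ a CW complex; and the source is a countable increasing union — along the closed cofibrations given by multiplication by $g_m$ — of the semialgebraic (hence triangulable) spaces $\Alg_d(\RP^m,\RP^n)$, so it too has CW homotopy type. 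For based maps one runs the same argument with $\Map^*$, $\Alg_d^*$ in place of $\Map$, $\Alg_d$, using the based analogue of Theorem \ref{thm: I}; the latter follows from the proof of Theorem \ref{thm: I} itself, because a uniformly small approximation of a based family $\tilde F$ can be corrected back to a based family by post-composing with the continuous family of algebraic automorphisms $R_y\in\mathrm{SO}(n+1)$ of $\RP^n$ taking $\tilde G(y,{\bf e}_m)$ back to ${\bf e}_n$, an operation that keeps the restriction to each slice $\{y\}\times\RP^m$ algebraic.

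The only step that is more than formal bookkeeping is the identification $\Alg(\RP^m,\RP^n)=\bigcup_{d\ge1}\Alg_d(\RP^m,\RP^n)$. One has to verify that the liberal notion of regular map that governs Theorem \ref{thm: I} — locally a quotient of polynomials whose denominator does not vanish on the real locus — is captured by the concrete polynomial description defining $\Alg_d$, and the delicate point is behavior at the hyperplane at infinity, where one must produce a homogeneous representative with no common \emph{real} zero even though common non-real zeros are allowed.
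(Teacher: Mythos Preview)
Your proposal is correct and follows exactly the route the paper intends: the corollary is presented in the paper simply as a restatement of the Stable Theorem~\ref{thm: I}, with no separate proof given. You have filled in the details the paper leaves implicit --- the identification $\Alg(\RP^m,\RP^n)=\bigcup_d\Alg_d(\RP^m,\RP^n)$, the $\pi_0$ bookkeeping, the upgrade from weak to genuine homotopy equivalence via CW homotopy type, and the based variant via rotation-correction --- and you rightly flag the first of these as the only substantive step.
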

As before we shall consider restricted maps. 
For a fixed basepoint preserving algebraic map 
$g\in \Alg_d^*(\RP^{m-1},\RP^n)$, we let
$\Alg_d(m,n;g)$ be the space defined by
$$\Alg_d(m,n;g)=\{f\in \Alg_d^*(\RP^m,\RP^n):f\vert \RP^{m-1}=g\}.$$
Again, we see that the inclusion $i_{d,\R}$ restricts further to give an inclusion
$$i_{d,\R}^{\p}:\Alg_d(m,n;g)\to F(m,n;g).$$

\paragraph{Spaces of polynomials. }
Now we move on to the spaces of collections of polynomials which represent the algebraic maps. These will all be subsets of real affine or real projective spaces.
\par
Let ${\cal H}_{d,m}\subset \R [z_0,\cdots ,z_m]$
denote the subspace consisting of all
homogenous polynomials of degree $d$.
Let $ A_{d}(m,n)(\Bbb R)\subset {\cal H}_{d,m}^{n+1}$ denote the space of all $(n+1)$-tuples 
$(f_0,\cdots ,f_n)\in \R[z_0,\cdots ,z_m]^{n+1}$
of homogeneous polynomials of degree $d$ without non-trivial common real roots
(but possibly with non-trivial common {\it complex} roots). These are precisely the collections that represent algebraic maps. Since the algebraic map is invariant under multiplication of all polynomials by a non-zero scalar, it is convenient to define the projectivisation 
$$\tilde{A}_d(m,n)= A_{d}(m,n)(\Bbb R) / \R^*$$
which is a subset of the real projective space
$({\cal H}_{d,m}^{n+1}\setminus\{{\bf 0}\})/\R^*$.

We have a projection
$$
\Gamma_d:\tilde{A}_d(m,n)\to \Alg_d(\RP^m,\RP^n)
$$
which, composed with the inclusion $j_{d,\R}$ yields a map
$$
j_d:\tilde{A}_d(m,n)\to \Map_{[d]_2}(\RP^m,\RP^n)
$$
that takes a collection of polynomials to the map it represents.

To describe basepoint-preserving maps we use the space $A_{d}(m,n)\subset A_d(m,n)(\R)$, which consists of $(n+1)$-tuples $(f_0,\cdots ,f_n)\in A_d(m,n)(\R)$ such that the coefficient at $z_0^d$ is $1$ in $f_0$ and $0$ in the remaining $f_k$. Note that every algebraic map which preserves the basepoint has a representation by such a collection of polynomials.

We also get a natural projection map
$$\Psi_d:A_d(m,n)\to \Alg_d^*(\RP^m,\RP^n)$$
which induces a composite map
$$i_d=i_{d,\R}\circ \Psi_d:A_d(m,n)\to \Map^*_{[d]_2}(\RP^m,\RP^n).$$

Now we define polynomial representations of restricted maps.
Let $g\in \Alg_d^*(\RP^{m-1},\RP^n)$ be a fixed basepoint-preserving algebraic map 
with some fixed polynomial representation $g=[g_0:\cdots :g_n]$ such that 
$(g_0,\cdots ,g_n)\in A_d(m-1,n)$. Set $B_k=\{g_k+z_mh:h\in {\cal H}_{d-1,m}\}$ $(k=0,1,\ldots ,n)$ and let
$$A_d^*=B_0\times B_1\times \cdots \times B_n\subset  {\cal H}_{d,m}^{n+1}.$$
This space contains collections of polynomials which restrict to $(g_0,\cdots ,g_n)$ when $z_m=0$. Define  $A_d(m,n;g)\subset A_d^*$ to be the subspace of all collections with 
no non-trivial common {\it real} zero. Clearly, such a collection determines a map in $\Alg_d(m,n;g)$ and we again obtain a projection
$$\Psi_d^{\p}:A_d(m,n;g)\to \Alg_d(m,n;g)$$
and composite map
$$
i_d^{\p}:A_d(m,n;g)\to F(m,n;g).
$$

\paragraph{Remark.} The spaces of free and based mappings are common objects of interest in topology and have both been studied by Segal and subsequent authors. The third type --- restricted maps --- was introduced in \cite{Mo2}. This new space of mappings has some independent interest, since, it is homotopy equivalent to the well studied space of $m$-fold loops on the $n$-sphere (or $2m$ fold loops on the $2n+1$-sphere in the complex case). However, the principal reason for introducing these spaces in \cite{Mo2} is that they play a key role in an inductive proof of theorems concerning free maps. This will be similar in our approach. 

The definitions of this section can be summarized in the following useful diagram, for which we assume $g$ is a basepoint-preserving algebraic map of degree $d$.

\[
\xymatrix{%
\Map_{[d]_2}(\RP^m,\RP^n)     &     \Map^*_{[d]_2}(\RP^m,\RP^n) \ar@{_{(}->}[l]  &   F(m,n;g) \ar@{_{(}->}[l] \\
\Alg_d(\RP^m,\RP^n) \ar@{^{(}->}[u]^{j_{d,\R}}     &     \Alg_d^*(\RP^m,\RP^n) \ar@{^{(}->}[u]^{i_{d,\R}} \ar@{_{(}->}[l]  &   \Alg_d(m,n;g)  \ar@{^{(}->}[u]^{i_{d,\R}^{\p}} \ar@{_{(}->}[l]\\
\tilde{A}_d(m,n) \ar@{->>}[u]^{\Gamma_d} \ar@/^4pc/[uu]^>>>>>>{j_d}   &     A_d(m,n) \ar@{->>}[u]^{\Psi_d} \ar@/^4pc/[uu]^>>>>>>{i_d} &   A_d(m,n;g) \ar@{->>}[u]^{\Psi_d^{\p}} \ar@/^4pc/[uu]^>>>>>>{i_d^{\p}}
}
\]

We are also going to make use of the following:
\begin{fact}
If $m+2\leq n$ then $A_d(m,n)$, $A_d(m,n;g)$ and $\Map_\epsilon^*(\RP^m,\RP^n)$ are simply-connected.
\end{fact}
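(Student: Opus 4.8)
The plan is to establish simple-connectivity of each of the three spaces $A_d(m,n)$, $A_d(m,n;g)$, and $\Map_\epsilon^*(\RP^m,\RP^n)$ separately, starting from the last one, since the fibration structures relate them. For $\Map_\epsilon^*(\RP^m,\RP^n)$, I would use the fact that $\RP^m$ has a CW structure with one cell in each dimension $0,1,\dots,m$, so that the evaluation/restriction tower expresses $\Map^*(\RP^m,\RP^n)$ as built from iterated principal fibrations with fibres $\Omega^k S^n$ for $k\le m$ (coming from attaching the $k$-cells; more precisely one filters $\RP^m$ by skeleta $\RP^0\subset\RP^1\subset\cdots\subset\RP^m$ and uses the cofibrations $\RP^{k-1}\hookrightarrow\RP^k$). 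Each fibre $\Omega^k S^n$ has $\pi_0$ and $\pi_1$ both trivial exactly when $n-k\ge 2$, i.e. when $k\le n-2$; since $k$ ranges only up to $m$ and we assume $m\le n-2$, every fibre in the tower is simply connected (and path-connected). A short exact-sequence chase up the tower then gives $\pi_0=\pi_1=0$ for $\Map_\epsilon^*(\RP^m,\RP^n)$, and in particular this forces $F(m,n;g)\simeq\Omega^m S^n$ to be simply connected as well (directly, $\pi_i(\Omega^m S^n)=\pi_{i+m}(S^n)$ vanishes for $i\le 1$ when $m\le n-2$).

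For $A_d(m,n;g)$, I would exploit that it is an open subset of the affine space $A_d^*=B_0\times\cdots\times B_n\cong{\cal H}_{d-1,m}^{n+1}$, obtained by deleting the "resultant-type" discriminant $\Sigma$ consisting of those collections $(f_0,\dots,f_n)$ possessing a common nontrivial real zero. The standard way to compute low-dimensional connectivity of such complements is a transversality / general-position (codimension-counting) argument: the set of $(f_0,\dots,f_n)$ vanishing simultaneously at a prescribed point of $\RP^m$ has real codimension $n+1$ in $A_d^*$ (each polynomial imposes one real condition, and the conditions are independent once $d$ is large enough that the evaluation maps are surjective — which holds here because the $B_k$ are cosets of the full space ${\cal H}_{d-1,m}$ and $z_m$ does not vanish identically on $\RP^m\setminus\RP^{m-1}$), and letting the point vary over the $m$-dimensional $\RP^m\setminus\RP^{m-1}$ (points on $\RP^{m-1}$ are excluded since $g$ has no real zero there) shows $\Sigma$ has codimension $\ge n+1-m\ge 3$. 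A complement of a subvariety of real codimension $\ge 3$ in a (contractible) affine space is simply connected. The same argument, now with the full space ${\cal H}_{d,m}^{n+1}$ and the point varying over all of $\RP^m$, handles $A_d(m,n)$: the bad locus again has codimension $\ge n+1-m\ge 3$. (For a clean statement one can quote the simplicial-resolution / Vassiliev-type estimates, but the crude codimension bound already suffices since we only need connectivity through dimension $1$.)

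The main obstacle is making the codimension count rigorous when $\Sigma$ is not a manifold — it is a real algebraic (or semialgebraic) set, and one must be careful that "codimension $\ge 3$" in the semialgebraic sense really does imply the complement is $1$-connected. The clean route is to stratify $\Sigma$ into smooth semialgebraic strata and note each stratum has codimension $\ge 3$, then invoke general position: a loop or a disk in $A_d^*$ can be perturbed off a stratified set of codimension $\ge 3$ (a $1$-disk misses codimension $\ge 2$, a $2$-disk misses codimension $\ge 3$), giving both surjectivity of $\pi_1(A_d^*\setminus\Sigma)\to\pi_1(A_d^*)=0$ and injectivity. One subtlety worth checking is the independence of the vanishing conditions: at a point $[a_0:\cdots:a_m]$ with $a_m\ne 0$ the map ${\cal H}_{d-1,m}\to\R$, $h\mapsto h(a)$, is a nonzero linear functional (e.g. the monomial $z_m^{d-1}$ evaluates nontrivially), so indeed the evaluation $A_d^*\to\R^{n+1}$ at a fixed such point is a submersion, which is exactly what the transversality argument needs. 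The hypothesis $m+2\le n$ is used precisely to push the codimension of the discriminant past $2$, which is the threshold for simple-connectivity.
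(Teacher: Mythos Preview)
Your proposal is correct and follows essentially the same approach as the paper: for $A_d(m,n)$ and $A_d(m,n;g)$ the paper simply observes that the discriminant has codimension $n-m+1\ge 3$ in the ambient affine space (your argument makes this codimension count explicit and adds the transversality/stratification justification the paper omits), and for $\Map_\epsilon^*(\RP^m,\RP^n)$ the paper invokes exactly the restriction fibration sequence $\Omega^m S^n \to \Map_\epsilon^*(\RP^m,\RP^n)\to \Map_\epsilon^*(\RP^{m-1},\RP^n)$ that your skeletal tower unwinds. One small imprecision: the fibre of the restriction map at the bottom step $k=1$ is $\Omega\RP^n$ rather than $\Omega S^n$, which has two components; this is harmless since you are already working with a fixed component $\Map_\epsilon^*$, but the cleanest fix is to start the induction at $m=1$ with $\Map_\epsilon^*(\RP^1,\RP^n)\simeq\Omega S^n$ directly.
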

\begin{proof} 
Because
both $A_d(m,n)$ and $A_d(m,n;g)$ are of codimension $n-m+1$ in the affine space in which they are contained, they are simply connected.
For $\Map_\epsilon^*(\RP^m,\RP^n)$, 
the simply connectivity follows by analyzing the restriction fibration sequence
$\Omega^m S^n \to
\Map_\epsilon^*(\RP^m,\RP^n)\stackrel{r}{\rightarrow}
 \Map_\epsilon^*(\RP^{m-1},\RP^n)$.
\end{proof}

\subsection{Finite-dimensional approximations.}
\label{section:notation2}
In this subsection we state the theorems which can be viewed as a construction of finite-dimensional approximations to the spaces of continuous maps between real projective spaces. More precisely, we show that the various spaces of polynomials are homotopy (or homology) equivalent to their corresponding spaces of maps and that the equivalence range increases with the degree. Some statements of this sort have been known, for example

\begin{thm}[\cite{KY1}, \cite{Y0}]\label{thm: A2}
If $n\geq 2$ and $d\geq 1$,
the natural projection map
$i_d:A_d(1,n)\to  \Map^*_{[d]_2}(\RP^1,\RP^n)\simeq \Omega S^{n}$
is a homotopy equivalence
through dimension $(d+1)(n-1)-2$.
\end{thm}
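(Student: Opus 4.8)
The plan is to realize $A_d(1,n)$ as the complement of a discriminant in an affine space and to study its topology via a Vassiliev-type simplicial resolution, following the strategy pioneered by Vassiliev and Mostovoy. First I would observe that $A_d(1,n)$ consists of $(n+1)$-tuples $(f_0,\dots,f_n)$ of binary forms of degree $d$ in $z_0,z_1$ with no common real zero; this sits inside the affine space ${\cal H}_{d,1}^{n+1}\cong\R^{(d+1)(n+1)}$, and its complement (the \emph{discriminant} $\Sigma_d$) is the set of tuples that \emph{do} have a common real root in $\RP^1$. By restricting to basepoint-preserving representatives as in the definition of $A_d(m,n)$, one cuts down to an affine space of dimension $N=(d+1)(n+1)-(n+1)=(d+1)n+d$; the relevant discriminant there is still a closed semialgebraic set, and Alexander duality converts the homology of $A_d(1,n)$ into the Borel–Moore homology of this discriminant.

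The heart of the argument is to build a (degeneracy-free) simplicial resolution $\Sigma_d^\Delta \to \Sigma_d$ by resolving the stratification according to how many common real roots (counted appropriately as points of $\RP^1$) the tuple has. A tuple with a common root at $x\in\RP^1$ satisfies $n+1$ independent linear conditions on its coefficients (one for each $f_k$), so the fiber over a generic point of a $j$-fold-root stratum contributes a simplex of dimension $j-1$ spanned by those $j$ points of $\RP^1\cong S^1$. The standard Vassiliev machinery then yields a spectral sequence computing $H_*(\Sigma_d^\Delta)$ — equivalently $\bar H_*(\Sigma_d)$ — whose $E^1$-page is assembled from the homology of configuration-space-like bundles over $\mathrm{Conf}_j(\RP^1)$ with fibers the affine spaces of tuples vanishing at those $j$ points (fiber dimension $N-j(n+1)$). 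Because $\RP^1\simeq S^1$ these configuration spaces and the associated bundles are quite tractable, and the first stratum where the resolution must be truncated (the "degree-$d$ cutoff": one cannot impose more than roughly $d/\,\cdot$-many independent root conditions before the affine fiber becomes empty) is precisely what produces the numerical bound. Running the dimension count, the truncation occurs so that $i_d$ is an isomorphism on $H_k$ for $k$ below $(d+1)(n-1)-1$ and the stable answer matches $H_*(\Omega S^n)$; translating through Alexander duality and comparing with the scanning/Segal description of $\Map_{[d]_2}^*(\RP^1,\RP^n)\simeq\Omega S^n$ gives the homotopy equivalence through dimension $(d+1)(n-1)-2$ (one loses one dimension passing from a homology statement to a homotopy one, and here simple-connectivity when $n\ge2$ makes this legitimate by a Whitehead-type argument once both spaces are seen to be simply connected in the relevant range).

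Concretely the steps are: (1) identify $A_d(1,n)$ with an affine-space complement and set up Alexander duality; (2) construct the simplicial resolution and verify it is a homotopy equivalence onto $\Sigma_d$; (3) write down the associated spectral sequence, identify $E^1$ in terms of homology of bundles over configuration spaces of $\RP^1$; (4) compute enough of $E^1$ (and check collapse, or that differentials vanish in the range of interest) to pin down $\bar H_*(\Sigma_d)$ up to the truncation dimension; (5) dualize back and compare with the known homology of $\Omega S^n$ via the scanning map, upgrading to homotopy equivalence using simple-connectivity. The main obstacle will be step (4): controlling the spectral sequence — in particular showing that the "error terms" coming from strata beyond the degree-$d$ cutoff do not contaminate homology below dimension $(d+1)(n-1)-1$, and handling the twisting of the local systems over $\mathrm{Conf}_j(\RP^1)$ (which, because $\RP^1$ is non-orientable as a configuration base in the relevant sense, forces attention to signs and to $\Z/2$-coefficient subtleties). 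This is exactly the kind of delicate bookkeeping that the erroneous filtration in \cite{Mo2} was meant to streamline, so I would follow the corrected treatment in \cite{Mo3} closely here.
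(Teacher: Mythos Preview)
The paper does not prove Theorem~\ref{thm: A2}; it is quoted from \cite{KY1} and \cite{Y0}, where exactly the Vassiliev--resolution strategy you sketch is carried out. Your outline (discriminant complement, Alexander duality, simplicial resolution, spectral sequence, comparison with $\Omega S^n$) is correct in spirit and matches both those references and the paper's own Section~\ref{section 4} argument for $m\ge 2$.

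That said, you are overcomplicating the one-variable case in several places. Because the coefficient of $z_0^d$ in $f_0$ is fixed to be $1$, any common real root $[x_0:x_1]$ must have $x_1\ne 0$ (cf.\ the analogue of Lemma~\ref{lemma:common-root}), so after normalising the common roots lie in $\R=\RP^1\setminus\{[1{:}0]\}$; the relevant configuration spaces are $C_r(\R)$, not $C_r(\RP^1)$. These are contractible, so the twisted-coefficient issues you anticipate do not arise, and your worry about ``non-orientability'' is misplaced in any case since $\RP^1\cong S^1$ is orientable. For binary forms the Vandermonde determinant already shows that up to $d+1$ distinct points impose independent linear conditions, so the simplicial resolution is non-degenerate throughout and neither the Veronese truncation nor the corrections of \cite{Mo3} are needed here; those complications only enter for $m\ge 2$. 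Two smaller slips: the ambient affine space has dimension $(n+1)(d+1)-(n+1)=d(n+1)$, not $(d+1)n+d$; and your remark that ``one loses one dimension passing from homology to homotopy'' is not how the bound arises---for simple spaces a homology equivalence through dimension $N$ is already a homotopy equivalence through dimension $N$, and the $-2$ in $(d+1)(n-1)-2$ comes directly out of the spectral-sequence range.
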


Now we consider the problem of generalizing Theorem \ref{thm: A2} to $m\geq 2$.
For a connected space $X$, let $C_r(X)$ denote the space of
$r$ distinct unordered points in $X$, and let
$D(d;m,n)$ denote the positive integer given by
\begin{equation}\label{MD}
D(d;m,n)=(n-m)\big(\lfloor \frac{d+1}{2}\rfloor  +1\big) -1.
\end{equation}

\begin{thm}[\textbf{Finite dimensional approximation, restricted case}]\label{thm: II}
Let $2\leq m<n$ be integers and
$g\in\Alg_d^*(\RP^{m-1},\RP^n)$ be a fixed basepoint-preserving algebraic map.
\begin{enumerate}
\item[$\I$]
The 
map
$i_d^{\p}:A_d(m,n;g)\to F(m,n;g)\simeq \Omega^mS^n$
is a homotopy equivalence through dimension $D(d;m,n)$ if $m+2\leq n$
and a homology equivalence through dimension $D(d;m,n)$ if $m+1=n$. 
\item[$\II$]
For any $k\geq 1$, 
$H_k(A_d(m,n;g),\Z)$ contains the subgroup
$$
G^d_{m,n}=\bigoplus_{r=1}^{\lfloor \frac{d+1}{2}\rfloor}
H_{k-(n-m)r}(C_r(\R^m),(\pm \Z)^{\otimes (n-m)} )
$$
as a direct summand.
\end{enumerate}
\end{thm}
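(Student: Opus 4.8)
The plan is to construct a simplicial (Vassiliev-type) resolution of the discriminant inside the affine space $A_d^*$ and to analyze it through a degree filtration, following the general strategy of Mostovoy's work \cite{Mo2}, \cite{Mo3}. First I would set $\Sigma_d = A_d^* \setminus A_d(m,n;g)$, the \emph{discriminant} consisting of all collections $(f_0,\dots,f_n) \in A_d^*$ having a nontrivial common real zero. Since $A_d^*$ is an affine space of dimension $N = (n+1)\dim \mathcal{H}_{d-1,m}$, Alexander duality gives $\tilde H_k(A_d(m,n;g),\Z) \cong \tilde H^{N-k-1}_c(\Sigma_d,\Z)$ (up to a twist coming from orientations), so everything reduces to understanding the Borel--Moore homology of $\Sigma_d$. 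The key geometric observation is that a collection in $A_d^*$ vanishing at a real point $x \in \RP^m$ forces a codimension-$(n+1)$ condition, but because all members of $B_k$ agree with $g_k$ on $\RP^{m-1}$, the relevant common zeros lie in the affine chart $\R^m = \RP^m \setminus \RP^{m-1}$; this is why $C_r(\R^m)$ rather than $C_r(\RP^m)$ appears.

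Next I would build the simplicial resolution $\mathcal{X}^\Delta \to \Sigma_d$: over a collection with common zero set $\{x_1,\dots,x_r\}$ one takes the simplex spanned by the corresponding points under a suitable embedding $\R^m \hookrightarrow \R^M$ in general position (so that any $\le$ some bound number of points are affinely independent). Standard arguments (the resolution is a proper homotopy equivalence onto $\Sigma_d$, so preserves Borel--Moore homology) let us replace $\Sigma_d$ by $\mathcal{X}^\Delta$. The resolution carries an increasing filtration $F_1 \subset F_2 \subset \cdots$ by the number of points $r$, and $F_r \setminus F_{r-1}$ fibers over $C_r(\R^m)$ with fiber an open $(r-1)$-simplex times an affine space: the affine space records the collections in $A_d^*$ vanishing at the prescribed $r$ points, which has codimension exactly $(n+1)r$ in $A_d^*$ \emph{as long as $r \le \lfloor\frac{d+1}{2}\rfloor$ or so}, the bound being dictated by when imposing vanishing at $r$ points of $\R^m$ on polynomials of the shape $g_k + z_m h$, $\deg h = d-1$, remains $r$ independent linear conditions. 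Computing the Borel--Moore homology of the stratum $F_r \setminus F_{r-1}$ via the Thom isomorphism / Leray spectral sequence for this fibration produces exactly $H_{*-(n-m)r}(C_r(\R^m),(\pm\Z)^{\otimes(n-m)})$ after dualizing, where the twisted coefficients arise because permuting the $r$ points permutes both the simplex vertices and the normal directions, each orientation-reversal contributing one sign to the power $n-m$.

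Then I would run the spectral sequence of the filtration. For part $\I$, one checks that the first column (or the stable range of the spectral sequence) computes the homology of $F(m,n;g) \simeq \Omega^m S^n$ — this matches the scanning-map / Segal-type identification and the "stabilization" statement that $\bigcup_d A_d(m,n;g) \simeq F(m,n;g)$, which follows from the Stable Theorem together with the stabilization map induced by multiplication by $g_m$. The range $D(d;m,n) = (n-m)(\lfloor\frac{d+1}{2}\rfloor+1)-1$ is precisely the dimension below which the truncation at $r = \lfloor\frac{d+1}{2}\rfloor$ does not affect homology: the first omitted stratum $r = \lfloor\frac{d+1}{2}\rfloor+1$ contributes in Borel--Moore degrees forcing ordinary homology only in dimensions $> D(d;m,n)$. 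When $m+2 \le n$ both $A_d(m,n;g)$ and $\Omega^m S^n$ are simply connected (by the Fact), so a homology equivalence through that dimension upgrades to a homotopy equivalence; when $m+1 = n$ we only get the homology statement. For part $\II$, the point is that the spectral sequence \emph{degenerates partially} in the relevant bidegrees — the classes coming from the $r$-th stratum for $1 \le r \le \lfloor\frac{d+1}{2}\rfloor$ survive and split off as a direct summand. I would establish this by exhibiting a compatible splitting at the level of the stable space $\bigcup_d A_d(m,n;g) \simeq \Omega^m S^n$, using the known homology of $\Omega^m S^n$ (a polynomial/divided-power algebra on generators detected exactly by the configuration-space homology $H_*(C_r(\R^m),(\pm\Z)^{\otimes(n-m)})$, by the classical Snaith splitting / May--Milgram model), and then noting that the inclusion $A_d(m,n;g) \hookrightarrow \bigcup_d A_d(m,n;g)$ is injective on the summand $G^d_{m,n}$ because those classes already appear at filtration $\le \lfloor\frac{d+1}{2}\rfloor$, which is "present" in $A_d(m,n;g)$.

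The main obstacle I expect is controlling the failure of the codimension count for large $r$ — i.e., proving that imposing vanishing at $r$ distinct real points genuinely imposes $r$ independent conditions on each $B_k$ (equivalently that the relevant evaluation map on $\mathcal{H}_{d-1,m}$-shifted spaces has maximal rank) up to $r = \lfloor\frac{d+1}{2}\rfloor$, and carefully identifying what happens at the boundary value. This is the genuinely degree-dependent, combinatorial heart of the argument, and it is exactly the place where Mostovoy's original paper had gaps; I would follow the corrected treatment in \cite{Mo3}, in particular the device of working with tuples of polynomials (the space $A_d^*$ rather than its image) so that the linear-algebra estimates are clean, and the use of complex skeleta in the filtration to handle the possible nonreal common zeros without them interfering with the real-discriminant stratification.
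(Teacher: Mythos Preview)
Your overall architecture---discriminant $\Sigma_d^*=A_d^*\setminus A_d(m,n;g)$, Alexander duality, a simplicial resolution filtered by the number of common real zeros, identification of the $r$-th stratum with a twisted bundle over $C_r(\R^m)$---matches the paper exactly, including the observation (the paper's Lemma~\ref{lemma:common-root}) that the common zeros are forced into the affine chart $\R^m$. Your identification of the $E^1$ terms and of the twist $(\pm\Z)^{\otimes(n-m)}$ is correct.

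The substantive divergence is in how you close off part~(i). You propose to identify the stable limit $\bigcup_d A_d(m,n;g)$ with $F(m,n;g)$ ``by the Stable Theorem'' and then argue by truncation. But Theorem~\ref{thm: I} is about $\Alg(X,\G_{n,k}(\K))\hookrightarrow\Map(X,\G_{n,k}(\K))$: spaces of algebraic \emph{maps}, not of polynomial tuples, and with no boundary restriction. Extracting from it the statement $\bigcup_d A_d(m,n;g)\simeq F(m,n;g)$ would require reopening the Stone--Weierstrass argument in a restricted/relative form; as written this is a gap. The paper deliberately avoids any appeal to the stable results (see the Remark following Theorem~\ref{thm: III}). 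Instead it introduces an auxiliary map $i_d^{\prime\prime}:A_d(m,n;g)\to\Omega^m(\R^{n+1}\setminus\{{\bf 0}\})\simeq\Omega^mS^n$ by direct evaluation of the tuple on $S^m$, and compares the discriminant spectral sequence \emph{term by term} with Vassiliev's spectral sequence for $\Map^*(S^m,S^n)$. Vassiliev's theorem (Lemma~\ref{lemma: Va}) supplies both the same configuration-space groups on the target $E^1$ and the collapse $d^t=0$ with trivial extensions; naturality of the Thom isomorphism makes the comparison map $\tilde\theta^1_{r,s}$ an isomorphism for $r\le\lfloor\tfrac{d+1}{2}\rfloor$, yielding the homology equivalence for $i_d^{\prime\prime}$ through $D(d;m,n)$. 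Passing from $i_d^{\prime\prime}$ back to $i_d^{\prime}$ is then a short diagram chase (diagram~(\ref{gamma})) through the double covers $\gamma_m,\gamma_n$ plus a finite-$\Bbb F$-dimension count. Your route is essentially Mostovoy's from \cite{Mo2} and can be made to work, but not by quoting Theorem~\ref{thm: I} as stated; the paper's route trades the stable input for Vassiliev's computation.

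A smaller technical point: the embedding the paper uses is neither a generic ``general position'' one nor the complex-skeleta filtration from \cite{Mo3}, but specifically the Veronese map $\psi_d^*:\R^m\to\R^{M_d}$, $x\mapsto(\varphi_I(x,1))_{|I|=d}$. Its Vandermonde property (any $r\le d+1$ image points are linearly independent) does three things at once: it gives the uniform dimension estimate $\dim\big(\SZ(d)_r\setminus\SZ(d)_{r-1}\big)\le N_d^*-r(n-m)-1$ for \emph{all} $r$ (Lemma~\ref{lemma: range**}), so the strata with $r>\lfloor\tfrac{d+1}{2}\rfloor$ contribute nothing in total degree $\le D(d;m,n)$; it makes any two $(r-1)$-simplices on Veronese vertices meet only in common faces when $2r\le d+1$, yielding the vector-bundle description of the stratum for $r\le\lfloor\tfrac{d+1}{2}\rfloor$ (Lemma~\ref{lemma: vector bundle*}); and it renders the ``codimension count'' you flag as the main obstacle a one-line Vandermonde check rather than something requiring the machinery of \cite{Mo3}. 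The complex-skeleta device is not used here.

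For part~(ii) your Snaith-splitting argument is equivalent to the paper's: the paper phrases it as transferring $d^t=0$ and the trivial extension from Vassiliev's spectral sequence across $\tilde\theta$, which forces $\tilde E^1_{r,s}(d)=\tilde E^\infty_{r,s}(d)$ for $r\le\lfloor\tfrac{d+1}{2}\rfloor$ and splits off $G^d_{m,n}$ as a summand.
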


Using the method of \cite[Ch. 6]{Mo2} we can use the above to obtain a generalization of Theorem \ref{thm: A2}.
\begin{thm}[\textbf{Finite dimensional approximation, free and basepointed case}]\label{thm: III}
If $2\leq m <n$ and 
$d\geq 1$ are integers, 
the natural maps
$j_d : \tilde{A}_d(m,n)  \to \Map_{[d]_2}(\RP^m,\RP^n)$
and
$i_d  : A_d(m,n) \to \Map_{[d]_2}^*(\RP^m,\RP^n)$
are homotopy equivalences through dimension $D(d;m,n)$ if
$m+2\leq n$ and homology equivalences through dimension
$D(d;m,n)$ if $m+1=n$.
\end{thm}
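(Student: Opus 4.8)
The plan is to deduce the free and based cases from the restricted case (Theorem \ref{thm: II}) by an inductive argument on $m$, using the restriction fibration sequences on both the continuous and the algebraic side, exactly along the lines of \cite[Ch. 6]{Mo2}. First I would set up the two columns of the big diagram at the level of fibrations. On the continuous side we have the restriction fibration
$$
F(m,n;g)\hookrightarrow \Map^*_{[d]_2}(\RP^m,\RP^n)\xrightarrow{\ r\ }\Map^*_{[d]_2}(\RP^{m-1},\RP^n),
$$
with $F(m,n;g)\simeq\Omega^mS^n$. On the algebraic side I would first need a comparable fibration-like statement: the restriction map $A_d(m,n)\to A_d(m-1,n)$ (sending a tuple to its value at $z_m=0$) has fiber over $(g_0,\dots,g_n)$ the space $A_d(m,n;g)$; one must check this map is a quasifibration, or at least induces a long exact sequence in homotopy/homology through the relevant range. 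Granting this, the map $i_d$ of Theorem \ref{thm: III} is a map of fibration sequences over the base map $i_d : A_d(m-1,n)\to\Map^*_{[d]_2}(\RP^{m-1},\RP^n)$ (the inductive hypothesis, with the base case $m-1=1$ supplied by Theorem \ref{thm: A2}, noting $D(d;1,n)=(n-1)(\lfloor(d+1)/2\rfloor+1)-1$ agrees up to the needed range with the $(d+1)(n-1)-2$ bound there), with fiber map $i_d^{\p}$ (Theorem \ref{thm: II}(i)). A five-lemma / comparison-of-spectral-sequences argument through dimension $D(d;m,n)$ then yields the based statement; note $D(d;m-1,n)\ge D(d;m,n)$ since $n-(m-1)>n-m$, so the base contributes an equivalence through a range at least as large as needed, and the fiber contributes through exactly $D(d;m,n)$.

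Second, to pass from the based to the free case I would use the evaluation fibration
$$
\Map^*_{[d]_2}(\RP^m,\RP^n)\hookrightarrow \Map_{[d]_2}(\RP^m,\RP^n)\xrightarrow{\ \mathrm{ev}\ }\RP^n
$$
and its algebraic counterpart: evaluation at the basepoint ${\bf e}_m$ gives $\tilde A_d(m,n)\to\RP^n$, with fiber essentially $A_d(m,n)$ (after accounting for the projectivization $/\R^*$ and the normalization used to define $A_d(m,n)$; this identification is where one uses that the basepoint can be moved by an algebraic automorphism of $\RP^n$, or more simply that $\mathrm{GL}_{n+1}(\R)$ acts algebraically and transitively on $\RP^n$). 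Comparing the two evaluation fibrations over the identity of $\RP^n$, with the already-established based equivalence on fibers, gives the free statement $j_d$ through the same dimension $D(d;m,n)$ by the same five-lemma argument. When $m+1=n$ all "homotopy equivalence through dimension $N$" statements are replaced by "homology equivalence through dimension $N$", and the five-lemma is replaced by a comparison of the Leray–Serre (or Eilenberg–Moore) spectral sequences of the two fibrations; since $\RP^n$ and $\Map^*_{[d]_2}(\RP^{m-1},\RP^n)$ are nilpotent (the latter by the Fact, at least when $m+2\le n$; for $m+1=n$ one argues at the level of homology directly), the spectral sequence comparison is legitimate.

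The main obstacle I expect is the algebraic-side fibration property: showing that the restriction map $A_d(m,n)\to A_d(m-1,n)$, and likewise the evaluation map $\tilde A_d(m,n)\to\RP^n$, behave like (quasi)fibrations with the claimed fibers, at least well enough to get the long exact sequences through dimension $D(d;m,n)$. Unlike the continuous case, these are not literally fiber bundles — the fiber $A_d(m,n;g)$ can jump in dimension or fail to be locally trivial because the "no common real zero" condition interacts with the affine slices $B_k$ in a way that depends on $g$. Mostovoy handles the analogous complex statement using a scanning/stabilization argument and an explicit open cover; I would follow that template, but the real setting requires care with orientation local systems $(\pm\Z)^{\otimes(n-m)}$ (as already visible in Theorem \ref{thm: II}(ii)) and with the parity constraint $[d]_2$. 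A secondary technical point is verifying that the numerical bounds match up across the induction — specifically that $D(d;m-1,n)-1\ge D(d;m,n)$ so that no range is lost when the base-space equivalence is fed into the five-lemma — which is immediate from the formula \eqref{MD} but should be stated explicitly.
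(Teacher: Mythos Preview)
Your overall strategy matches the paper's exactly: induction on $m$ via the restriction map $R:A_d(m,n)\to A_d(m-1,n)$, with Theorem \ref{thm: A2} as the base case and Theorem \ref{thm: II} supplying the fiberwise equivalence, followed by the evaluation-fibration comparison for the free case $j_d$. You also correctly flag the main obstacle, namely that $R$ is not a fibration.

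Two points where the paper is sharper than your sketch. First, the paper does not try to compare $R$ directly with the continuous restriction fibration $r$. Instead it inserts the pullback $M_d^*(\RP^m,\RP^n)=r^{-1}\big(i_d(A_d(m-1,n))\big)$, the space of continuous maps whose restriction to $\RP^{m-1}$ is algebraic of degree $d$, and factors $i_d=j''\circ i''$. The right-hand square then compares two \emph{genuine} Serre fibrations $r$ and $r'$ with identical fiber $F(m,n;\cdot)$ over the base map $i_d:A_d(m-1,n)\to\Map^*_{[d]_2}(\RP^{m-1},\RP^n)$; since both bases are simply connected the spectral-sequence comparison goes through cleanly and gives $j''$ as an equivalence through $D(d;m-1,n)>D(d;m,n)$ by induction. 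This leaves $i'':A_d(m,n)\to M_d^*(\RP^m,\RP^n)$, where both spaces sit over the \emph{same} base $A_d(m-1,n)$ and the fiber map is exactly $i_d'$ from Theorem \ref{thm: II}. This pullback device is what makes the five-lemma argument honest; without it you are comparing a non-fibration to a fibration over different bases.

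Second, the remaining issue for $i''$ is not resolved by a ``scanning/stabilization argument and an explicit open cover'' as you suggest. The paper (following \cite[Sect.~6]{Mo2}) appeals to stratified Morse theory \cite{GM}: $A_d(m,n)$ and $A_d(m-1,n)$ are complements of closed real algebraic subvarieties in affine spaces, and the Goresky--MacPherson machinery supplies the local triviality needed to propagate the fiberwise equivalence of Theorem \ref{thm: II} to a global homology equivalence. Your remark about the local systems $(\pm\Z)^{\otimes(n-m)}$ is a red herring here; those live entirely inside the proof of Theorem \ref{thm: II} and play no role in this deduction.
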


Theorems \ref{thm: II} and \ref{thm: III} are proved in Section \ref{section 4}.
\par\vspace{1mm}\par
\noindent{\bf Remark.} We note here that, unlike in \cite{Mo2}, we actually do not use the \lq\lq stable\rq\rq\ results of Section \ref{section:stable} in our proof of our \lq\lq unstable\rq\rq ones. The price that we pay for this is that we have to use the main theorem of Vassiliev \cite{Va} describing the spectral sequence  for computing the cohomology of the space of  mappings from a $m$-dimensional CW-complex to a $n$-connected one (where $n\ge m$). Our \lq\lq unstable\rq\rq  results are obtained by directly comparing 
spectral sequences of Vassiliev type.

\subsection{Subspaces of algebraic maps.}
\label{section:notation3}
The purpose of this part is to obtain an extension of the following theorem:
\begin{thm}[J. Mostovoy, \cite{Mo1}]\label{thm: A1}
If $n\geq 2$ and $d\geq 1$ are integers,
the inclusion map $i_{d,\R}:\Alg_d^*(\RP^1,\RP^n)\to \Map_{[d]_2}^*(\RP^1,\RP^n)\simeq \Omega S^n$ is a homotopy equivalence through dimension 
$d(n-1)-2$. 
\end{thm}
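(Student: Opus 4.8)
The plan is to deduce this statement from the machinery that produces Theorems \ref{thm: II} and \ref{thm: III} in the special case $m=1$, together with the homology-to-homotopy upgrade mentioned in the Appendix. First I would observe that Theorem \ref{thm: A2} already gives that the projection $i_d\colon A_d(1,n)\to \Map^*_{[d]_2}(\RP^1,\RP^n)\simeq\Omega S^n$ is a homotopy equivalence through dimension $(d+1)(n-1)-2$, a range strictly larger than the claimed $d(n-1)-2$. So the entire content of the theorem lies in comparing the space of polynomial tuples $A_d(1,n)$ with the space of actual algebraic maps $\Alg^*_d(\RP^1,\RP^n)$ via the projection $\Psi_d\colon A_d(1,n)\to\Alg^*_d(\RP^1,\RP^n)$; once one knows $\Psi_d$ is a homotopy equivalence through dimension $d(n-1)-2$, composing with $i_d$ and using $i_d=i_{d,\R}\circ\Psi_d$ finishes the proof.

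The key step, then, is to analyze the fibers of $\Psi_d$. A tuple $(f_0,\dots,f_n)\in A_d(1,n)$ and a tuple $(\lambda f_0,\dots,\lambda f_n)$ with $\lambda\in\R^*$ give the same algebraic map, but so do tuples differing by a common polynomial factor of even degree with no real root (the polynomials $g_1^k=(z_0^2+z_1^2)^k$ in the notation of the paper, since $m=1$). Thus $\Alg^*_d(\RP^1,\RP^n)=\bigsqcup_{k} \Psi_d(\text{tuples with gcd exactly }(z_0^2+z_1^2)^k\cdot(\text{const}))$, and the piece with gcd of degree $2k$ is a copy of the image of $A_{d-2k}(1,n)$ under an injection, i.e. $\Alg^*_d$ is filtered by the images of $A^*_{d-2k}(1,n)$ for $k=0,1,\dots,\lfloor d/2\rfloor$, and the "new" part at the top is an open dense subset $\Alg^{*,0}_d\subset\Alg^*_d$ consisting of maps whose representing tuple has no common complex root at all; this open piece is precisely $A_d(1,n)$ modulo the free $\R^*$-action, hence $\Psi_d$ restricted to it is a principal $\R^*\simeq\ast$-ish quotient — more precisely $\R^*\simeq\{\pm1\}\times\R_{>0}$, the $\R_{>0}$ factor contractible, so this restriction is a homotopy equivalence onto $\Alg^{*,0}_d$ up to the $\Z/2$ which is absorbed because the scaling by $-1$ on an odd/even number of coordinates can be tracked. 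The approach is then to show the complement $\Alg^*_d\setminus\Alg^{*,0}_d$ (tuples forced to share a complex root) sits in high codimension, so that the inclusion $\Alg^{*,0}_d\hookrightarrow\Alg^*_d$ is a homology equivalence through a range, and to compute that range as $d(n-1)-2$; this is exactly the kind of stratification-of-discriminant argument (à la Vassiliev) that the paper sets up in Section \ref{section 4}, applied with $m=1$.

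I expect the main obstacle to be bookkeeping the precise equivalence range — showing that the locus of tuples with a common complex root contributes only in dimensions $\geq d(n-1)-1$ — and in particular matching the drop from $(d+1)(n-1)-2$ (the polynomial-space range) down to $d(n-1)-2$ (the algebraic-map range): the loss of one unit of $(n-1)$ must come precisely from quotienting by the lowest-degree even factor $(z_0^2+z_1^2)$, i.e. from the codimension of the first stratum $A_{d-2}(1,n)/\R^*$ inside $\tilde A_d(1,n)$. A secondary technical point is the identification of local coefficient systems: over $C_r(\R^1)$ the sign representation $(\pm\Z)^{\otimes(n-1)}$ appearing in Theorem \ref{thm: II}(ii) behaves differently for $n$ even versus odd, and one must make sure that the summand $G^{d}_{1,n}$ contributed by the discriminant is exactly the part killed by passing from $A_d(1,n)$ to $\Alg^*_d$. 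Finally, since the conclusion is stated as a genuine homotopy (not merely homology) equivalence, one invokes the Appendix's upgrade: both $\Alg^*_d(\RP^1,\RP^n)$ and $\Omega S^n$ are simply connected when $n\geq 2$ (for the former because $A_d(1,n)$ is simply connected by the codimension argument and $\Psi_d$ has connected fibers), so a homology equivalence through a range between simply connected spaces is a homotopy equivalence through that range.
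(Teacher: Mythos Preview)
Your reduction to studying $\Psi_d$ via the factorisation $i_d = i_{d,\R}\circ\Psi_d$ and Theorem~\ref{thm: A2} matches the paper's strategy. The divergence is in how $\Psi_d$ is analysed, and here your plan rests on a misconception. The paper's Appendix shows that $\Psi_d:A_d(1,n)\to\Alg_d^*(\RP^1,\RP^n)$ is a \emph{full} homotopy equivalence, with no dimension restriction: one identifies both spaces with configuration spaces of coloured particles in $\C$ (the roots of the $f_i$), introduces an auxiliary ``mod~$2$'' configuration space $T(d,n)$ on the real line, and proves that $\Psi_d$ and the comparison map $Q_d$ to $T(d,n)$ are quasi-fibrations with contractible fibres via the Dold--Thom criterion (Lemma~\ref{lemma: K}). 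No stratification by minimal degree or codimension count is used. Combined with Theorem~\ref{thm: A2} this actually yields the range $(d+1)(n-1)-2$, strictly better than the stated $d(n-1)-2$; Mostovoy's bound is simply the one originally obtained in \cite{Mo1} and is not sharp. The ``drop of one unit of $(n-1)$'' you are trying to account for does not exist.

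Several details of your stratification sketch are also off. The space $A_d(1,n)$ is already normalised (the coefficient of $z_0^d$ in $f_0$ is fixed to $1$), so there is no residual $\R^*$-action on the top stratum; $\Psi_d$ is a genuine homeomorphism there. Over a map of minimal degree $d-2k$ the fibre is not indexed by powers of $z_0^2+z_1^2$ but is the convex, hence contractible, space of \emph{all} positive homogeneous polynomials of degree $2k$ with leading coefficient $1$ --- this is precisely what makes the quasi-fibration argument run. Finally, your proposed homology-to-homotopy upgrade via simple connectivity fails for $n=2$, since $\pi_1(\Omega S^2)\cong\pi_2(S^2)\cong\Z$; the quasi-fibration route bypasses this entirely and works uniformly for all $n\geq 2$.
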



Observe that the fibers of the natural surjection
$\Psi_d:A_d(m,n)\to \Alg_d^*(\RP^m,\RP^n)$
are contractible (indeed, each fiber is homeomorphic with the convex space of everywhere positive polynomials of degree $d$ in $(n+1)$ variables with leading coefficient $1$). However, this is not sufficient to prove that the map is a homotopy equivalence, and so the best comparison between them we can offer is the following:

\begin{thm}[{}]\label{thm: A4}
The natural projection maps $\Psi_d : A_{d}(m,n)\to \Alg_d^*(\RP^m,\RP^n)$ and $\Gamma_d:\tilde{A}_d(m,n)\to \Alg_d(\RP^m,\RP^n)$
are homology equivalences.
\end{thm}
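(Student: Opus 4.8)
The plan is to realise $\Psi_d$ and $\Gamma_d$ as \emph{homology fibrations} with acyclic fibres, since such a map is automatically a homology equivalence. The fibres are easy to describe. If $f\in\Alg_d^*(\RP^m,\RP^n)$ has minimal degree $d_0$ (i.e. $d_0$ is least with $f\in\Alg_{d_0}^*(\RP^m,\RP^n)$, so $d_0\le d$, $d_0\equiv d\bmod 2$, and $f$ admits a primitive representation $[f_0':\cdots:f_n']$ of degree $d_0$, unique up to a common nonzero scalar), then $\Psi_d^{-1}(f)$ is the set of tuples $(qf_0',\dots,qf_n')$ where $q$ runs over the everywhere-positive homogeneous polynomials of degree $d-d_0$ whose coefficient at $z_0^{d-d_0}$ equals $1$; this is an open convex subset of an affine space, hence contractible. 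The fibres of $\Gamma_d$ are the projectivisations of the analogous sets, again contractible. So it remains to prove that each projection is a homology fibration.

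First I would introduce the stratification of the target by minimal degree,
\[
\Alg_d^*(\RP^m,\RP^n)=\bigsqcup_{d_0}\bigl(\Alg_{d_0}^*(\RP^m,\RP^n)\setminus\Alg_{d_0-2}^*(\RP^m,\RP^n)\bigr),
\]
a finite decomposition into locally closed pieces, and similarly for $\Alg_d(\RP^m,\RP^n)$; here one uses the evident finite-dimensional semialgebraic models for these spaces of maps (quotients of $A_d(m,n)$, resp.\ $\tilde A_d(m,n)$, by the algebraic relation of representing the same map) and the fact that $\Psi_d$, $\Gamma_d$ are semialgebraic maps. Over the open top stratum $\Psi_d$ restricts to a homeomorphism, since a map of minimal degree $d$ has a unique normalised primitive representative depending continuously on it; over each lower stratum Hardt's semialgebraic triviality theorem gives that $\Psi_d$ is a trivial bundle with the contractible fibre described above. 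In particular $\Psi_d$ is a homology fibration over each individual stratum.

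To glue these together I would invoke the homological form of the Dold--Thom criterion (McDuff--Segal). Taking the closed stratum $A=\Alg_{d-2}^*(\RP^m,\RP^n)$ inside $B=\Alg_d^*(\RP^m,\RP^n)$, I would produce, via a Thom isotopy / regular-neighbourhood argument using the real-algebraic structure, a neighbourhood $N$ of $A$ that deformation retracts onto $A$, together with a covering deformation of $\Psi_d^{-1}(N)$ onto $\Psi_d^{-1}(A)$. The fibrewise behaviour of this covering deformation is the decisive point: a point of $N\setminus A$ has minimal degree exactly $d$, so the induced map of fibres is the inclusion of a point into a contractible set, which is a homology equivalence; hence the gluing hypothesis holds and $\Psi_d$ is a homology fibration over $B$. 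An induction on the number of strata then finishes the proof, and the argument for $\Gamma_d$ is identical.

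I expect this gluing step to be the main obstacle: one must understand the behaviour of $\Psi_d$ near the locus $A_d'=\Psi_d^{-1}(\Alg_{d-2}^*(\RP^m,\RP^n))$ of non-primitive tuples, where the fibre dimension jumps. This is precisely the point at which contractibility of the fibres alone fails to force a (weak) homotopy equivalence --- consistently with the remark preceding the statement --- so that only the homological conclusion survives; and it is here that one genuinely needs the real-algebraic hypotheses, through the semialgebraic local triviality of $\Psi_d$ transverse to the stratification supplied by the theorems of Hardt and Thom. The remaining ingredients --- identifying the fibres, the stratification, triviality over a fixed stratum --- are routine.
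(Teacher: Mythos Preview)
Your strategy is a genuine alternative to the paper's, and the ingredients you assemble (fibre identification, minimal-degree stratification, the homology-fibration criterion) are all correct in outline. But the route the paper takes is quite different, and the comparison is instructive.

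The paper does \emph{not} stratify the target. Instead it exhausts the \emph{source} $A_d(m,n)$ (identified with an open set in a real projective space $\tilde P$) by compact pieces $\tilde P\setminus V_\epsilon$, where $V_\epsilon$ consists of tuples admitting a common zero in some $\epsilon$-ball in $\CP^m$ centred on $\RP^m$. The restricted map $\Psi_d^\epsilon:\tilde P\setminus V_\epsilon\to\Psi_d(\tilde P\setminus V_\epsilon)$ is then a closed surjection of compact Hausdorff spaces, and its fibres are positive homogeneous polynomials with no zeros in the $\epsilon$-tube about $\RP^m$. A single explicit deformation (the anisotropic rescaling $z_k\mapsto z_k/(1+\tan\frac{\pi t}{2})^k$, which pushes complex roots away from $\RP^m$) shows all these fibres deformation retract onto the convex set of all positive polynomials, hence are contractible. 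Vietoris--Begle then gives a homology isomorphism for each $\Psi_d^\epsilon$, and one passes to the colimit over $\epsilon\to 0$.

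What each approach buys: the paper trades your structural Dold--Thom gluing for a concrete, elementary deformation and a classical theorem, at the cost of introducing the auxiliary $\epsilon$-filtration. Your approach is more conceptual and makes the role of the minimal-degree stratification transparent, but the price is exactly the step you flag as the obstacle.

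And that step is a genuine gap as written. Producing a neighbourhood $N$ of $A=\Alg_{d-2}^*$ with a deformation retraction is standard for semialgebraic sets; the difficulty is the \emph{covering} deformation on $\Psi_d^{-1}(N)$. Concretely, you must continuously deform a tuple $(f_0,\dots,f_n)$ that is merely ``close to having a nontrivial common factor'' into one that actually does, and do so coherently in families. Neither Hardt's theorem nor Thom's isotopy lemma supplies this directly: they trivialize $\Psi_d$ over strata of the base, but do not by themselves furnish a homotopy of the total space carrying one fibre type into another across the jump in fibre dimension. This is precisely the content that the paper's explicit rescaling supplies in its setting. Your proposal would become a proof if you either constructed such a covering homotopy explicitly (perhaps by factoring out an ``approximate GCD'' and deforming it), or replaced the Dold--Thom argument by a Leray/hypercohomology comparison that only needs local acyclicity of fibres rather than a global covering deformation.
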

Theorem \ref{thm: A4} is proved in Section \ref{section 5}.
\par
From this and Theorem \ref{thm: III} we immediately obtain the result on subspace inclusions:

\begin{thm}[\textbf{Homology approximation by subspaces}]\label{thm: IIIa}
If $2\leq m <n$ and 
$d\geq 1$ are integers, 
the inclusion maps
$j_{d,\R}:\Alg_d(\RP^m,\RP^n)\to \Map_{[d]_2}(\RP^m,\RP^n)$
and
$i_{d,\R}:\Alg_d^*(\RP^m,\RP^n)\to \Map_{[d]_2}^*(\RP^m,\RP^n)$
are homology equivalences through dimension $D(d;m,n)$.
\end{thm}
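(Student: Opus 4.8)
The plan is to deduce this directly from Theorem \ref{thm: III} and Theorem \ref{thm: A4}, composing the maps in the columns of the big diagram. Recall that the free case asserts that $j_d = j_{d,\R}\circ\Gamma_d : \tilde A_d(m,n)\to\Map_{[d]_2}(\RP^m,\RP^n)$ is a homology equivalence through dimension $D(d;m,n)$, and similarly $i_d = i_{d,\R}\circ\Psi_d : A_d(m,n)\to\Map^*_{[d]_2}(\RP^m,\RP^n)$ is a homology equivalence through the same dimension. By Theorem \ref{thm: A4}, the projections $\Gamma_d$ and $\Psi_d$ are homology equivalences (in \emph{all} degrees). First I would note that a homology equivalence (a map inducing isomorphisms on all integral homology groups) has a well-defined \lq\lq homology inverse\rq\rq\ at the level of the long exact sequences, so that if $g\circ f$ is a homology isomorphism through dimension $N$ and $f$ is a homology isomorphism (in all degrees), then $g$ is a homology isomorphism through dimension $N$. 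Applying this with $f=\Gamma_d$, $g=j_{d,\R}$ gives that $j_{d,\R}$ is a homology equivalence through dimension $D(d;m,n)$; applying it with $f=\Psi_d$, $g=i_{d,\R}$ gives the based statement.

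To make the cancellation argument rigorous I would phrase it via the mapping cone or, more simply, via the five lemma applied to the long exact homology sequences of the pairs. Concretely, $H_k(\tilde A_d(m,n))\xrightarrow{(\Gamma_d)_*} H_k(\Alg_d(\RP^m,\RP^n))$ is an isomorphism for every $k$ by Theorem \ref{thm: A4}, and $H_k(\tilde A_d(m,n))\xrightarrow{(j_d)_*} H_k(\Map_{[d]_2}(\RP^m,\RP^n))$ is an isomorphism for $k\le D(d;m,n)$ and an epimorphism for $k = D(d;m,n)+1$ by Theorem \ref{thm: III} (that is the precise meaning of \lq\lq homology equivalence through dimension $N$\rq\rq, by the convention recalled after Theorem \ref{thm: S}, together with the standard strengthening that such a map is also surjective one dimension higher). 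Since $(j_d)_* = (j_{d,\R})_*\circ(\Gamma_d)_*$ and $(\Gamma_d)_*$ is an isomorphism in all degrees, $(j_{d,\R})_*$ inherits exactly the same property, i.e. it is an isomorphism for $k\le D(d;m,n)$. The based case is identical with $\Psi_d$, $i_d$, $i_{d,\R}$ in place of $\Gamma_d$, $j_d$, $j_{d,\R}$.

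I do not expect a genuine obstacle here, since all the content has been pushed into Theorems \ref{thm: III} and \ref{thm: A4}; the only mild subtlety is bookkeeping the \lq\lq through dimension $N$\rq\rq\ convention carefully. The one place to be slightly careful is that homology equivalence through dimension $N$ is, by definition in this paper, an isomorphism on $H_k$ for $k\le N$ only, with no a priori statement in degree $N+1$; but since $(\Gamma_d)_*$ and $(\Psi_d)_*$ are isomorphisms in \emph{all} degrees, the factorizations $(j_d)_* = (j_{d,\R})_*\circ(\Gamma_d)_*$ and $(i_d)_* = (i_{d,\R})_*\circ(\Psi_d)_*$ transport the degree-by-degree isomorphism range of $j_d$, $i_d$ onto $j_{d,\R}$, $i_{d,\R}$ without any loss. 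Hence both inclusion maps are homology equivalences through dimension $D(d;m,n)$, which is the assertion of Theorem \ref{thm: IIIa}.
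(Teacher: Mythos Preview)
Your proposal is correct and matches the paper's own argument, which is the one-line observation that Theorem~\ref{thm: IIIa} follows immediately from Theorem~\ref{thm: III} and Theorem~\ref{thm: A4} via the factorizations $j_d=j_{d,\R}\circ\Gamma_d$ and $i_d=i_{d,\R}\circ\Psi_d$. Your aside about surjectivity one degree higher is unnecessary (and not part of the paper's convention), but as you yourself note, it is not needed since $(\Gamma_d)_*$ and $(\Psi_d)_*$ are isomorphisms in all degrees.
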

\noindent{\bf Remark.} 
To replace homology equivalences in the above by homotopy equivalences we would need to know that $\Alg_d^*(\RP^m,\RP^n)$ are simply connected for $m+1<n$. In fact the case $m=1$ we can replace \lq\lq homology\rq\rq\  with \lq\lq homotopy\rq\rq\  by a direct argument, which we sketch in the Appendix.


\section{Finite-dimensional approximations.}\label{section 4}
In this section, we construct a spectral sequence converging to
 $H_*(A_d(m,n;g),\Z)$, and use it to prove Theorem \ref{thm: II}. Then we deduce Theorem \ref{thm: III}.

\subsection{Simplicial resolutions.}
\par\noindent{\bf Definition. }
(i) For a finite set ${\bf x} =\{x_0,x_1,\cdots ,x_k\}\subset \R^N$,
let $\sigma ({\bf x})$ denote the convex hull spanned by ${\bf x}.$
Note that   $\sigma ({\bf x})$
is a $k$-dimensional simplex if and only if vectors
$\{x_j-x_0\}_{j=1}^k$ are linearly independent.
In particular, $\sigma ({\bf x})$ is a
$k$-dimensional simplex
 if $x_0,\cdots ,x_k$ are linearly independent over $\R$.
\par
(ii) Let $h:X\to Y$ be a surjective map such that
$h^{-1}(y)$ is a finite set for any $y\in Y$, and let
$i:X\to \R^N$ be an embedding.
Let  ${\cal X}^{\Delta}$  and $h^{\Delta}:{\cal X}^{\Delta}\to Y$ denote the space and the map
defined by
$$
{\cal X}^{\Delta}=
\big\{(y,w)\in Y\times \R^N:
w\in \sigma (i(h^{-1}(y)))
\big\}\subset Y\times \R^N,
\ h^{\Delta}(y,w)=y.
$$
The pair $({\cal X}^{\Delta},h^{\Delta})$ is called
{\it a simplicial resolution }of $(h,i)$.
If for each $y\in Y$ any $k$ points of $i(h^{-1}(y))$ span
$(k-1)$-dimensional affine subspace of $\R^N$,
$({\cal X}^{\Delta},h^{\Delta})$ is called  
{\it  a non-degenerate simplicial resolution}, otherwise the simplicial resolution is said to be {\it degenerate}.
\par
(iii)
For each $k\geq 0$, let ${\cal X}^{\Delta}_k\subset {\cal X}^{\Delta}$ be the subspace
given by 
$$
{\cal X}_k^{\Delta}=\big\{(y,\omega)\in {\cal X}^{\Delta}:
\omega\in\sigma ({\bf u}),
{\bf u}=\{u_1,\cdots ,u_l\}\subset i(h^{-1}(y)),l\leq k\big\}.
$$
We make the identification $X={\cal X}^{\Delta}_1$ by identifying the point $x$ with the pair $(h(x),i(x))$, and
obtain the following  increasing filtration:
$$
\emptyset =
{\cal X}^{\Delta}_0\subset X={\cal X}^{\Delta}_1\subset {\cal X}^{\Delta}_2\subset
\cdots \subset {\cal X}^{\Delta}_k\subset {\cal X}^{\Delta}_{k+1}\subset
\cdots \subset \bigcup_{k= 0}^{\infty}{\cal X}^{\Delta}_k={\cal X}^{\Delta}.
$$
\par\noindent{\bf Remark. }
Non-degenerate simplicial resolutions have a long history (see e.g. \cite{Va}), while degenerate ones appear to have been introduced for the first time in \cite{Mo2}.

\begin{lemma}[\cite{Mo2}, \cite{Va}]\label{lemma: simp}
Let $h:X\to Y$ be a surjective map such that, for any $y\in Y$,
$h^{-1}(y)$ is a finite set and let
$i:X\to \R^N$ be an embedding.
\par
$\I$
If $X$ and $Y$ are closed semi-algebraic spaces and the
two maps $h$, $i$ are polynomial maps, then
$h^{\Delta}:{\cal X}^{\Delta}\stackrel{\simeq}{\rightarrow}Y$
is a homotopy equivalence.
\par
$\II$
There is an embedding $j:X\to \R^M$ such that the associated simplicial resolution
$(\tilde{\cal X}^{\Delta},\tilde{h}^{\Delta})$
of $(h,j)$ is non-degenerate, and
the space $\tilde{\cal X}^{\Delta}$
is uniquely determined up to homeomorphism.
Moreover,
there is a filtration preserving homotopy equivalence
$q^{\Delta}:\tilde{\cal X}^{\Delta}\stackrel{\simeq}{\rightarrow}{\cal X}^{\Delta}$ such that $q^{\Delta}\vert X=\mbox{id}_X$.
\qed
\end{lemma}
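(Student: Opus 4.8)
The plan is to prove each part by the standard device for simplicial resolutions (Vassiliev \cite{Va}, Mostovoy \cite{Mo2}), which rests on one geometric principle: a proper surjection of sufficiently regular spaces (semi-algebraic sets, or more generally ANRs) all of whose fibres are contractible is a homotopy equivalence. This is Hardt's semi-algebraic triviality theorem in the semi-algebraic setting, and in general follows from the Vietoris--Begle theorem together with a Whitehead-type argument (equivalently from Smale's theorem on proper maps with $n$-connected fibres). For part $\I$ I would apply this principle directly to $h^{\Delta}$; for part $\II$ I would first build a concrete non-degenerate model by a generic Veronese-type re-embedding and then apply the same principle to the natural comparison map.

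For part $\I$, the fibre of $h^{\Delta}$ over $y$ is $\{y\}\times\sigma(i(h^{-1}(y)))$, the convex hull of a finite point set in $\R^N$, hence a non-empty compact convex polytope and in particular contractible. Under the hypotheses ($X,Y$ closed semi-algebraic, $h$ and $i$ polynomial) the set ${\cal X}^{\Delta}\subset Y\times\R^N$ is semi-algebraic and $h^{\Delta}$ is a proper semi-algebraic map: the fibres are compact, and properness at infinity is inherited from $h$ being a polynomial map of closed semi-algebraic sets (one may also reduce to the compact case at the outset). Hardt's theorem then gives a finite semi-algebraic stratification of $Y$ over each stratum of which $h^{\Delta}$ is a trivial bundle with contractible fibre; an induction over the strata, ordered by dimension, using that each attaching inclusion is a cofibration, upgrades this to the statement that $h^{\Delta}$ is a homotopy equivalence. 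This is exactly the assertion proved in \cite{Va}, which may simply be quoted.

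For part $\II$, in the situations where the lemma is applied $h$ is a proper polynomial map with finite fibres, so there is a uniform bound $p$ with $\#h^{-1}(y)\le p$ for all $y\in Y$ (if no such bound exists, run the construction into $\R^{\infty}=\bigcup_M\R^M$ and truncate). Let $\nu\colon\R^N\to\R^M$ have as its components all monomials of degree $\le p-1$ in the $N$ coordinates. Then $\nu$ is a polynomial embedding, and it carries any set of at most $p$ distinct points of $\R^N$ to an affinely independent set: given $\le p$ distinct points, Lagrange-type interpolation produces, for each of them, a polynomial of degree $\le p-1$ equal to $1$ there and $0$ at the others, and these furnish the affine functionals witnessing affine independence. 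Hence $j:=\nu\circ i\colon X\to\R^M$ is an embedding for which the associated resolution $(\tilde{\cal X}^{\Delta},\tilde h^{\Delta})$ is non-degenerate, so for each $y$ the set $\sigma(j(h^{-1}(y)))$ is a genuine simplex with vertices in bijection with $h^{-1}(y)$, on which barycentric coordinates are unique. Uniqueness up to homeomorphism: for a second such embedding $j'$, the assignment $(y,\sum_{x}t_x j(x))\mapsto(y,\sum_{x}t_x j'(x))$ (with $t_x\ge 0$, $\sum_x t_x=1$, sums over $x\in h^{-1}(y)$) is well defined by uniqueness of barycentric coordinates on both sides, continuous, bijective, filtration preserving, restricts to $\mathrm{id}_X$ on $X={\cal X}^{\Delta}_1$, and, being proper between locally compact spaces, is a homeomorphism. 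Finally define $q^{\Delta}\colon\tilde{\cal X}^{\Delta}\to{\cal X}^{\Delta}$ by the same pushforward of barycentric coordinates, $(y,\sum_x t_x j(x))\mapsto(y,\sum_x t_x i(x))$; the target point lies in $\sigma(i(h^{-1}(y)))$, so $q^{\Delta}$ is well defined and continuous, satisfies $h^{\Delta}\circ q^{\Delta}=\tilde h^{\Delta}$, sends $\tilde{\cal X}^{\Delta}_k$ into ${\cal X}^{\Delta}_k$ (a point supported on at most $k$ vertices maps into the convex hull of at most $k$ of the $i(x)$), and restricts to $\mathrm{id}_X$. Over each $y$ it is the affine surjection of the simplex $\sigma(j(h^{-1}(y)))$ onto the polytope $\sigma(i(h^{-1}(y)))$ obtained by keeping the weights and moving the vertices; its fibres are non-empty convex sets, hence contractible. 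So $q^{\Delta}$ is a proper surjection with contractible fibres, and the principle of the first paragraph — or, since $j$ is polynomial, the two-out-of-three property applied to $h^{\Delta}\circ q^{\Delta}=\tilde h^{\Delta}$ together with part $\I$ — shows $q^{\Delta}$ is a homotopy equivalence.

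The genuinely non-formal input, invoked twice, is the statement that a proper surjection with contractible fibres is a homotopy equivalence; this fails without some regularity of the spaces, so the point needing the most care is to check, in each concrete application later in the paper, that the hypotheses making it valid (semi-algebraicity, properness, and the uniform fibre bound used to fix the Veronese degree) are actually satisfied. The combinatorial ingredients — the interpolation lemma behind non-degeneracy and the barycentric-coordinate formulas for uniqueness and for $q^{\Delta}$ — are routine.
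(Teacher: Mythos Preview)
The paper does not actually prove this lemma: it is stated with a terminal \qed and attributed to \cite{Mo2} and \cite{Va}, so there is no in-paper argument to compare against. Your sketch is correct and is essentially the standard proof found in those references: contractibility of the convex-hull fibres plus Hardt triviality for $\I$, and the Veronese re-embedding with barycentric-coordinate bookkeeping for $\II$. Nothing more is required here.
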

\par\noindent{\bf Remark. }
When $h$ is not finite to one,  every simplicial resolution is degenerate. In this case it is also possible to define the associated non-degenerate resolution with properties analogous to the above (for details see  \cite{Mo2}).

\subsection{A spectral sequence for $A_d(m,n;g).$}
Fix a basepoint-preserving algebraic map $g\in\Alg_d^*(\RP^{m-1},\RP^n)$ of degree $d$ together with a representation $(g_0,\cdots,g_n)\in A_d(m-1,n)$. Recall from Section \ref{section:notation1} that we defined $A_d(m,n;g)$ as an open subspace of the affine space $A_d^*$. Let $N_d^*=\dim A_d^*=(n+1)\binom{m+d-1}{m}$. We define $\Sigma_d^*\subset A_d^*$ as the \emph{discriminant} of $A_d(m,n;g)$ in $A_d^*$:
$$
\Sigma_d^*=A_d^*\setminus A_d(m,n;g).
$$
In other words, $\Sigma_d^*$ consists of the $(n+1)$-tuples of polynomials in $A_d^*$ which have at least one nontrivial common real zero.

Our goal in this subsection is to construct a simplicial resolution associated with the discriminant and the spectral sequence for the natural filtration of this resolution. 
We start innocently enough:

\begin{lemma}\label{lemma:common-root}
If $(f_0,\cdots ,f_n)\in \Sigma_d^*$ and
${\bf x}=(x_0,\cdots ,x_m)\in \R^{m+1}$ is a non-trivial common root
of $f_0,\cdots ,f_n$, then $x_m\not= 0$.
\end{lemma}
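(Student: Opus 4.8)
The plan is to unpack the definition of $A_d^*$ and check what happens when we set $z_m = 0$. Recall that $A_d^* = B_0 \times \cdots \times B_n$ where $B_k = \{g_k + z_m h : h \in {\cal H}_{d-1,m}\}$, and that $(g_0,\ldots,g_n) \in A_d(m-1,n)$, meaning the $g_k$ are homogeneous polynomials of degree $d$ in $z_0,\ldots,z_{m-1}$ only, with no nontrivial common real root in $\R^m$.

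First I would observe that any $(f_0,\ldots,f_n) \in A_d^*$ has the form $f_k = g_k + z_m h_k$ for suitable $h_k \in {\cal H}_{d-1,m}$. Hence for a point ${\bf x} = (x_0,\ldots,x_m) \in \R^{m+1}$ with $x_m = 0$ we get $f_k({\bf x}) = g_k(x_0,\ldots,x_{m-1})$ for every $k$. So if such an ${\bf x}$ were a common zero of $f_0,\ldots,f_n$, then $(x_0,\ldots,x_{m-1})$ would be a common real zero of $g_0,\ldots,g_n$ in $\R^m$. Since $(g_0,\ldots,g_n) \in A_d(m-1,n)$ has no nontrivial common real root, this forces $(x_0,\ldots,x_{m-1}) = {\bf 0}_m$, and combined with $x_m = 0$ this gives ${\bf x} = {\bf 0}_{m+1}$, contradicting the assumption that ${\bf x}$ is a nontrivial common root. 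Therefore $x_m \neq 0$.

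There is no real obstacle here — the statement is essentially immediate from the way the restriction $z_m = 0$ was built into the definition of $B_k$; the only thing to be careful about is to state clearly that $g_k$ involves only the variables $z_0,\ldots,z_{m-1}$, so that $g_k({\bf x})$ depends only on $(x_0,\ldots,x_{m-1})$. The point of the lemma is presumably that it lets one later restrict attention to roots with $x_m \neq 0$, i.e. to affine coordinates, when building the simplicial resolution of $\Sigma_d^*$; but that is a concern for the subsequent construction rather than for this proof.
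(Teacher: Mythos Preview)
Your proof is correct and follows the same approach as the paper: assume $x_m=0$, observe that $f_k(x_0,\ldots,x_{m-1},0)=g_k(x_0,\ldots,x_{m-1})$, and derive a contradiction with $(g_0,\ldots,g_n)\in A_d(m-1,n)$. If anything, your version is slightly more careful in explicitly noting why the resulting common root of the $g_k$ would have to be trivial.
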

\begin{proof}
If $x_m=0$
and
$(x_0,\cdots ,x_{m-1},0)$ was a common root of all $f_k$, then $g_k(x_0,\cdots ,x_{m-1})= f_k(x_0,\cdots ,x_{m-1},0)=0$, so all $g_k$ would have a non-trivial common root, which is a contradiction.
\end{proof}

\par\vspace{2mm}
\par\noindent{\bf Definition. }
Let  $Z_d^{*}\subset \Sigma_d^{*}\times \R^m$
denote 
{\it the tautological normalization} of 
the discriminant $\Sigma_d^*$
consisting of all pairs 
$({\bf f},{\bf x})=((f_0,\cdots ,f_n),
(x_0,\cdots ,x_{m-1}))\in \Sigma_d^{*}\times\R^m$
such that the polynomials $f_0,\cdots ,f_n$ have a non-trivial common real root
$({\bf x},1)=(x_0,\cdots ,x_{m-1},1)$.
Projection on the first factor  gives a surjective map
$\pi_d^{\p} :Z_d^{*}\to\Sigma_d^{*}.$
\par
Let $\phi_d^{*}:A_d^*\stackrel{\cong}{\rightarrow}\R^{N_d^*}$
be any fixed homeomorphism, and
let ${\rm H}_d$ be the set consisting of all
monomials $\varphi_I=z^I=z_0^{i_0}z_1^{i_1}\cdots z_m^{i_m}$
of degree $d$
($I=(i_0,i_1,\cdots ,i_m)\in\Z^{m+1}_{\geq 0}$,
$\vert I\vert =\sum_{k=0}^mi_k = d$).
Next, following \cite{Mo2} we make use of the  Veronese embedding, which will play a key role in our argument. The reason for using this embedding is that it defines a degenerate simplicial resolution, which gives rise to a spectral sequence with $E_1$ terms equal to zero outside a certain range 
(see Lemma \ref{lemma: range**} below.)
Let
$\psi_d^* :\R^m\to \R^{M_d}$
be the map given by
$\dis
\psi_d^* (x_0,\cdots ,x_{m-1})=
\Big(\varphi_I(x_0,\cdots ,x_{m-1},1)
\Big)_{\varphi_I\in {\rm H}_d},$
where $M_d:=
\binom{d+m}{m}.$
We define the embedding
$\Phi_d^* :Z_d^* \to \R^{N_d^*+M_d}$ by
$$
\Phi_d^*  ((f_0,\cdots ,f_n),{\bf x})=
(\phi_d^* (f_0,\cdots ,f_n),\psi_d^* ({\bf x})).
$$

\begin{lemma}\label{lemma: simplex*}
$\I$
If
$\{y_1,\cdots ,y_{r}\}\in C_r(\R^{m})$ is  any set of $r$ distinct points
in $\R^m$ and $r\leq d+1$, then the $r$ vectors $\{\psi_d^*(y_k):1\leq k\leq r\}$
are linearly independent
over $\R$ $($hence they
span an $(r-1)$-dimensional simplex in $\R^{M_d})$.
\par
$\II$
If $1\leq r\leq d+1$,
there is a homeomorphism
$\SZ(d)_r\setminus\SZ(d)_{r-1}\cong \tilde{\cal Z}^{\Delta}(d)_r\setminus\tilde{\cal Z}^{\Delta}(d)_{r-1}$.
\end{lemma}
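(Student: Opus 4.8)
\medskip
\noindent\textbf{Proof sketch.} For $\I$, the plan is to reduce the linear independence to a polynomial interpolation problem. Setting $z_m=1$ gives a bijection between the degree-$d$ monomials in $z_0,\dots,z_m$ and the monomials of degree $\le d$ in the $m$ variables $x_0,\dots,x_{m-1}$, so $\psi_d^*(\mathbf{y})$ is nothing but the vector of values $Q(\mathbf{y})$ as $Q$ ranges over a monomial basis of the space ${\cal P}_{\le d}$ of real polynomials of degree $\le d$ in $x_0,\dots,x_{m-1}$ (note $\dim{\cal P}_{\le d}=M_d$). A relation $\sum_{k=1}^r c_k\psi_d^*(y_k)=0$ is therefore equivalent to $\sum_{k=1}^r c_kQ(y_k)=0$ for every $Q\in{\cal P}_{\le d}$, and to kill it I would produce, for each index $l$, the Lagrange-type interpolant $Q_l=\prod_{k\ne l}\ell_k/\ell_k(y_l)$, where $\ell_k(\mathbf{x})=\langle\mathbf{x}-y_k,\,y_l-y_k\rangle$ is an affine function vanishing at $y_k$ with $\ell_k(y_l)=|y_l-y_k|^2>0$ (as $y_k\ne y_l$). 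Since $r\le d+1$ we have $\deg Q_l=r-1\le d$, so $Q_l\in{\cal P}_{\le d}$ and $Q_l(y_k)=\delta_{kl}$; substituting $Q=Q_l$ forces $c_l=0$ for all $l$. Hence the $\psi_d^*(y_k)$ are linearly independent, which in particular makes them affinely independent, so $\sigma(\{\psi_d^*(y_k):1\le k\le r\})$ is an $(r-1)$-simplex. I expect this part to be routine.

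For $\II$, the plan is to use $\I$ to show that the (in general degenerate) simplicial resolution $\SZ(d)$ attached to the Veronese embedding $\Phi_d^*$ is non-degenerate on subsets of size at most $d+1$, and then to identify its low-degree filtration terms with those of the non-degenerate resolution $\tilde{\cal Z}^{\Delta}(d)$ supplied by Lemma \ref{lemma: simp}. The first observation is that over a point $y\in\Sigma_d^*$ the embedding $\Phi_d^*$ sends a fibre point $(y,\mathbf{x})$ of $\pi_d^{\p}$ to $(\phi_d^*(y),\psi_d^*(\mathbf{x}))$, whose first block $\phi_d^*(y)$ does not vary with $\mathbf{x}$; consequently affine (in)dependence of any finite set of fibre points is governed entirely by the $\psi_d^*$-images of the distinct common real zeros $\mathbf{x}\in\R^m$ involved, and by $\I$ any $\le d+1$ of those are affinely independent. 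Hence every $\le(d+1)$-element subset of a fibre of $\pi_d^{\p}$ spans a genuine simplex of the expected dimension, which is exactly the non-degeneracy needed for the filtration terms $\SZ(d)_r$ with $r\le d+1$. I would then invoke Lemma \ref{lemma: simp}: it provides a filtration-preserving homotopy equivalence $q^{\Delta}\colon\tilde{\cal Z}^{\Delta}(d)\to\SZ(d)$ which is the identity on $Z_d^*$ and determines $\tilde{\cal Z}^{\Delta}(d)$ up to homeomorphism. Since the $r$-th term of a simplicial resolution only involves simplices on at most $r$ vertices, and for $r\le d+1$ these vertex sets are affinely independent on both sides, the construction of $q^{\Delta}$ shows that it restricts to a homeomorphism $\tilde{\cal Z}^{\Delta}(d)_r\to\SZ(d)_r$ for each $r\le d+1$; taking the difference of two consecutive such homeomorphisms gives the asserted $\SZ(d)_r\setminus\SZ(d)_{r-1}\cong\tilde{\cal Z}^{\Delta}(d)_r\setminus\tilde{\cal Z}^{\Delta}(d)_{r-1}$.

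The step I expect to be the main obstacle is making this last identification precise, i.e.\ showing that a point $(y,\omega)$ of the $r$-th stratum (with $r\le d+1$) lies in the relative interior of a \emph{unique} simplex $\sigma(\mathbf{u})$ with $\mathbf{u}$ an $r$-element subset of $\Phi_d^*((\pi_d^{\p})^{-1}(y))$, so that the tautological parametrization of the stratum by pairs (an unordered $r$-tuple of points in a fibre of $\pi_d^{\p}$, together with a point of the open $(r-1)$-simplex) is a homeomorphism. Part $\I$ already makes each such simplex genuine and pins down $\omega$'s barycentric coordinates once $\mathbf{u}$ is fixed; the delicate point is excluding the possibility that $\omega$ lies in the relative interiors of $\sigma(\mathbf{u})$ and $\sigma(\mathbf{u}')$ for two distinct $r$-subsets $\mathbf{u}\ne\mathbf{u}'$. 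When $|\mathbf{u}\cup\mathbf{u}'|\le d+1$ this is immediate from $\I$: the two simplices then intersect only in the common face spanned by $\mathbf{u}\cap\mathbf{u}'$, which would put $\omega$ in the $(r-1)$-skeleton, contradicting minimality. So the real work is the case $|\mathbf{u}\cup\mathbf{u}'|>d+1$, where the surplus common zeros are forced to lie on a line (more than $d+1$ common zeros on a line makes every $f_k$ vanish along it), and this is handled by the linear-algebra bookkeeping carried out for the Veronese resolution in \cite{Mo2}, whose argument I would follow. Once uniqueness is in hand, the two strata are identified and $\II$ follows.
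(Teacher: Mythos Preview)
Your proof of $\I$ is correct but takes a different route from the paper's. The paper first makes a generic linear change of coordinates so that the first coordinates $y_{0,1},\dots,y_{0,r}$ of the $r$ points become pairwise distinct, and then observes that the pure powers $1,z_0,\dots,z_0^d$ occur among the components of $\psi_d^*$; the corresponding $r\times r$ minor of the matrix $(\psi_d^*(y_k))_k$ is then a Vandermonde determinant and hence nonzero. Your Lagrange--type construction of polynomials $Q_l\in{\cal P}_{\le d}$ with $Q_l(y_k)=\delta_{kl}$ avoids the coordinate change and is at least as clean; the underlying content is identical in either approach, namely that $r\le d+1$ distinct points of $\R^m$ impose independent linear conditions on polynomials of degree $\le d$.

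For $\II$ the paper says only that it ``easily follows from $\I$'', so your sketch is already far more detailed than the original. You are right that the delicate point is the uniqueness of the supporting $r$-subset when the fibre of $\pi_d^{\p}$ contains more than $d+1$ points, but your proposed resolution of that case is not correct: for $m\ge 2$ a tuple $(f_0,\dots,f_n)\in\Sigma_d^*$ can have arbitrarily many common real zeros in $\R^m$ without any of them being collinear, so the parenthetical claim that ``surplus common zeros are forced to lie on a line'' is unfounded and the argument you propose to borrow from \cite{Mo2} does not apply as stated. Note, however, that every downstream use of this lemma in the paper (the proof of Lemma~\ref{lemma: vector bundle*} and the comparison of spectral sequences in Theorem~\ref{thm: II**}) is confined to the range $1\le r\le\lfloor(d+1)/2\rfloor$, where $|\mathbf{u}\cup\mathbf{u}'|\le 2r\le d+1$ and your ``easy'' case already yields the required uniqueness outright. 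So your argument is complete for the range that actually matters; for the full stated range $r\le d+1$ you should appeal directly to the construction of the non-degenerate resolution and of the comparison map $q^{\Delta}$ in \cite{Va} and \cite{Mo2} rather than to an ad hoc geometric claim about the common zero locus.
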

\begin{proof}
Writing $y_k=(y_{0,k},\cdots ,y_{m-1,k})$ for 
$1\leq k\leq  r$, for each $i\not= j$ 
we can find a number $l$ $(0\leq l\leq m-1)$
such that $y_{l,i}\not= y_{l,j}$. By a linear change of coordinates we can ensure that $l=0$ for all $i,j$.
The assertion (i) follows form the fact that the Vandermonde matrix constructed from the powers $y_{0,i}$ is non-singular,
and (ii) easily follows from (i).
\end{proof}

\par\noindent{\bf Definition. }
Let 
$(\SZ(d),\ ^{\p}{\pi_d}^{\Delta}:\SZ(d)\to\Sigma_d^*)$ 
and
$(\tilde{\SZ}(d),\ ^{\p}\tilde{\pi}_d^{\Delta}:\SZ(d)\to\Sigma_d^*)$ 
denote the simplicial resolution of
$(\pi_d^{\p},\Phi_d^*)$ 
and the corresponding non-degenerate
simplicial resolution
with the natural increasing filtrations
$$
\begin{cases}
\dis
\SZ(d)_0=\emptyset
\subset \SZ(d)_1\subset 
\SZ(d)_2\subset \cdots
\subset 
\SZ(d)=\bigcup_{k= 0}^{\infty}\SZ (d)_k,
\\
\dis
\tilde{\SZ}(d)_0=\emptyset
\subset \tilde{\SZ}(d)_1\subset 
\tilde{\SZ}(d)_2\subset \cdots
\subset \tilde{\SZ}(d)=\bigcup_{k= 0}^{\infty}\tilde{\SZ} (d)_k.
\end{cases}
$$
By Lemma \ref{lemma: simp} the map
$^{\p}{\pi}_d^{\Delta}:
\SZ(d)\stackrel{\simeq}{\rightarrow}\Sigma_d^*$
is a homotopy equivalence, and it
extends to  a homotopy equivalence
$^{\p}{\pi_{d+}^{\Delta}}:\SZ(d)_+\stackrel{\simeq}{\rightarrow}{\Sigma_{d+}^*},$
where $X_+$ denotes the one-point compactification of a
locally compact space $X$.
\par
Since
${\SZ (d)_r}_+/{\SZ (d)_{r-1}}_+
\cong (\SZ (d)_r\setminus \SZ (d)_{r-1})_+$,
we have the spectral sequence 
$\big\{E_t^{r,s}(d),
d_t:E_t^{r,s}(d)\to E_t^{r+t,s+1-t}(d)
\big\}
\Rightarrow
H^{r+s}_c(\Sigma_d^*,\Z),$
where
$H_c^k(X,\Z)$ denotes the cohomology group with compact supports given by 
$H_c^k(X,\Z):= H^k(X_+,\Z)$ and
$E_1^{r,s}(d):=H^{r+s}_c(\SZ(d)_r\setminus\SZ(d)_{r-1},\Z)$.
\par

It follows from the Alexander duality that there is a natural
isomorphism
\begin{equation}\label{Al}
H_k(A_d(m,n;g),\Z)\cong
H_c^{N_d^*-k-1}(\Sigma_d^*,\Z)
\quad
\mbox{for }1\leq k\leq N_d^*-2.
\end{equation}
Using (\ref{Al}) and
reindexing we obtain a
spectral sequence
\begin{eqnarray}\label{SS}
&&\big\{\E^t_{r,s}(d), \tilde{d}^t:\E^t_{r,s}(d)\to \E^t_{r+t,s+t-1}(d)\big\}
\Rightarrow H_{s-r}(A_d(m,n;g),\Z)
\end{eqnarray}
if $s-r\leq N_d^*-2$,
where
$\E^1_{r,s}(d)=
H^{N_d^*+r-s-1}_c(\SZ(d)_r\setminus\SZ(d)_{r-1},\Z).$

\begin{lemma}\label{lemma: vector bundle*}
If $1\leq r\leq \lfloor\frac{d+1}{2}\rfloor$,
$\SZ(d)_r\setminus\SZ(d)_{r-1}$
is homeomorphic to the total space of a real
vector bundle $\xi_{d,r}$ over $C_r(\R^m)$ with rank 
$l_{d,r}^*:=N_d^*-nr-1$.
\end{lemma}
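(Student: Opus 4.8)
**Proof proposal for Lemma \ref{lemma: vector bundle*}.**

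The plan is to exhibit $\SZ(d)_r\setminus\SZ(d)_{r-1}$ explicitly as an affine bundle over the configuration space $C_r(\R^m)$ and then read off the rank. First I would recall that, by Lemma \ref{lemma: simplex*}(ii), in the range $1\le r\le d+1$ the stratum $\SZ(d)_r\setminus\SZ(d)_{r-1}$ of the (degenerate) Veronese simplicial resolution is homeomorphic to the corresponding stratum $\tilde{\SZ}(d)_r\setminus\tilde{\SZ}(d)_{r-1}$ of the non-degenerate one, and $\lfloor\frac{d+1}{2}\rfloor\le d+1$, so it suffices to work with the non-degenerate resolution. Over a point $\{y_1,\dots,y_r\}\in C_r(\R^m)$ the fibre of $\tilde{\SZ}(d)_r\setminus\tilde{\SZ}(d)_{r-1}$ is the interior of the $(r-1)$-simplex spanned by the (linearly independent, by Lemma \ref{lemma: simplex*}(i)) vectors $\psi_d^*(y_k)$, times the set of $(n+1)$-tuples $(f_0,\dots,f_n)\in A_d^*$ that vanish at each of the $r$ points $(y_k,1)\in\R^{m+1}$. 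The open simplex contributes a trivial factor of dimension $r-1$; the substance of the argument is to analyze the linear conditions cut out by the vanishing requirement.

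The key step is a linear-algebra count. For a fixed configuration, requiring a homogeneous degree-$d$ polynomial $f_k$ to vanish at each of the $r$ prescribed points $(y_1,1),\dots,(y_r,1)$ imposes $r$ linear conditions; I would argue these conditions are independent precisely because the evaluation vectors are the $\psi_d^*(y_k)$, whose linear independence for $r\le d+1$ is Lemma \ref{lemma: simplex*}(i) (here one uses $r\le\lfloor\frac{d+1}{2}\rfloor\le d+1$; in fact the constraint $r\le\lfloor\frac{d+1}{2}\rfloor$ is what guarantees the vanishing locus has the same codimension across all of $C_r(\R^m)$ and, more importantly, is what the later range estimate needs). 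For $f_0$, which additionally lies in the affine subspace $B_0$ (fixed coefficient at $z_0^d$), and for $f_1,\dots,f_n$ in $B_1,\dots,B_n$, each factor is an affine subspace of $\mathcal{H}_{d,m}$ of codimension $r$; altogether the tuple lives in an affine subspace of $A_d^*$ of codimension $(n+1)r$. Since $\dim A_d^* = N_d^*$, the fibre over each configuration is an affine space of dimension $N_d^* - (n+1)r$, and together with the open-simplex factor of dimension $r-1$ this gives total fibre dimension $N_d^*-(n+1)r+(r-1) = N_d^*-nr-1 = l_{d,r}^*$, matching the claimed rank.

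Finally I would assemble these fibrewise descriptions into a genuine vector bundle $\xi_{d,r}\to C_r(\R^m)$: the vanishing conditions vary continuously (indeed algebraically) with the configuration, so the affine spaces form an affine bundle, and after choosing the obvious zero section (translate by a continuously varying particular solution, e.g. using the fixed representation $(g_0,\dots,g_n)$ to produce one) it becomes a rank-$l_{d,r}^*$ real vector bundle; the open-simplex coordinates supply an additional trivial summand of rank $r-1$, which I fold into $\xi_{d,r}$. The one point requiring care is \emph{local triviality} of the family of solution spaces over $C_r(\R^m)$ — one must check the rank does not jump, which is exactly where the hypothesis $r\le\lfloor\frac{d+1}{2}\rfloor$ (via Lemma \ref{lemma: simplex*}(i), ensuring the $r$ evaluation functionals stay independent) is used; I expect this local-triviality verification, rather than the dimension count, to be the main technical obstacle, and it is handled by the standard argument that a continuously varying family of surjective linear maps of constant rank has locally constant kernel dimension and admits local trivializations.
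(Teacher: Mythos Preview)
Your argument is correct and the dimension count matches the paper's exactly. There is one small but genuine difference in route worth flagging. The paper works directly with the Veronese resolution $\SZ(d)$ and defines the projection $\pi:\SZ(d)_r\setminus\SZ(d)_{r-1}\to C_r(\R^m)$ by reading off the vertices of the unique $(r-1)$-simplex with vertices in $\psi_d^*(\R^m)$ whose interior contains the given point. For $\pi$ to be well-defined one must know that two distinct such $(r-1)$-simplices meet only in a common face, and \emph{this} is where the paper uses the hypothesis $r\le\lfloor\frac{d+1}{2}\rfloor$: then any $2r\le d+1$ Veronese images are linearly independent by Lemma~\ref{lemma: simplex*}(i), so the simplices sit as faces of a common simplex. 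You instead pass to the non-degenerate resolution $\tilde{\cal Z}^{\Delta}(d)$ via Lemma~\ref{lemma: simplex*}(ii), where the projection to $C_r(\R^m)$ is well-defined automatically by non-degeneracy. On your route the sharp hypothesis $r\le\lfloor\frac{d+1}{2}\rfloor$ is not actually needed for this lemma --- the weaker bound $r\le d$ already gives both the stratum homeomorphism and the independence of the $r$ evaluation functionals --- so your diagnosis that the bound is there ``to keep the rank from jumping'' slightly misidentifies its real role in the paper's direct argument. One minor slip: in the non-degenerate resolution the open simplex is spanned by images under the abstract embedding $j$, not by the $\psi_d^*(y_k)$; this is harmless since you only use that it is an open $(r-1)$-simplex. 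Also note that the linear part of each $B_k$ is $z_m\,{\cal H}_{d-1,m}$, so the evaluation functionals whose independence you invoke are really the degree-$(d-1)$ Veronese vectors, independent for $r\le d$.
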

\begin{proof}
For ${\bf y}=\{y_1,\cdots ,y_r\}\in C_r(\R^m)$, let $\sigma_{\psi} ({\bf y})$
denote the convex hull spanned by $\{\psi_d^* (y_1),\cdots ,\psi_d^* (y_r)\}$,
and let $\mbox{int}(\sigma_{\psi}({\bf y}))$ be its interior.
By Lemma \ref{lemma: simplex*} , if
$1\leq r\leq \lfloor\frac{d+1}{2}\rfloor$
and $\sigma_{1}$ and $\sigma_{2}$ are
any two $(r-1)$-simplices in $\R^{M_d}$ with
$\sigma_1\not=\sigma_2$ whose vertices are
in the image of $\psi_d^*$, then either
$\sigma_{1}\cap\sigma_{2}=\emptyset$ or
$\sigma_{1}\cap \sigma_{2}$ is their common face of
lower dimension.
Therefore, if
$1\leq r\leq \lfloor \frac{d+1}{2}\rfloor$
and $y\in \R^{M_d}$
is contained in the interior of some $(r-1)$-simplex $\sigma$
whose vertices are in the image of $\psi_d^*$,
all vertices of  this $\sigma$ are uniquely determined up to order.
Hence, if $1\leq r\leq \lfloor \frac{d+1}{2}\rfloor$,
one can define a map
$\pi:\SZ(d)_r\setminus\SZ(d)_{r-1}\to C_r(\R^m)$
by 
$$
((f_0,\cdots ,f_n),y)\mapsto
\left\{
\begin{array}{c}
\mbox{ }
\\
\{y_1,\cdots ,y_r\}
\\
\mbox{ }
\end{array}\right|
\left.
\begin{array}{l}
(y_k,1) \in\R^{m+1} \mbox{ is a common root}
\\
\mbox{ of }(f_0,\cdots ,f_n)
\mbox{ for }1\leq k\leq r,
\\
y\in 
\mbox{int}(\sigma_{\psi}(\{y_1,\cdots ,y_r\})),
\\
y_i\not=y_j\mbox{ if }i\not=j
\end{array}\right\}.
$$
In general,
the condition that a polynomial in $B_k=\{g_k+z_mh:h\in {\cal H}_{d-1,m}\}$ 
vanishes at a given non-zero point 
gives one linear condition on its coefficients, and 
determines an affine hyperplane in $B_k$.
By Lemma \ref{lemma: simplex*}, if $r\leq d+1$, then
the condition that a polynomial in $B_k$ 
vanishes at $r$ distinct  
points ${\bf x}=\{x_k\}_{k=1}^r$
produces exactly $r$ independent conditions on its coefficients
if and only if the corresponding convex hull $\sigma_{\psi} ({\bf x})$
is an $(r-1)$-simplex.
Hence, if $1\leq r\leq d$, the space of polynomials in $B_k$
which
vanish at $r$ distinct points is the intersection of
$r$ affine hyperplanes in general position and thus has codimension $r$
in $B_k$.
\par
Thus, 
the fiber of $\pi$
is homeomorphic to the product of open $(r-1)$-simplex with
the real vector space of dimension
$(n+1)\big(\binom{d+m-1}{m}-r\big)=N_d^*-(n+1)r.$
One can also easily see that the map $\pi$ is the projection map of a locally trivial fiber bundle.
Hence, $\pi$ is a real vector bundle over $C_r(\R^m)$ with
the rank $l^*_{d,r}$, where
$l_{d,r}^*:=N_d^*-(n+1)r+r-1=N_d^*-nr-1.$
\end{proof}

\begin{lemma}\label{lemma: range**}
All non-zero entries of $\E^1_{r,s}(d)$ are
 situated in the range
$s\geq r\big(n+1-m\big)$ with
$r\geq 0$.
\end{lemma}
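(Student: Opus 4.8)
The plan is to analyze the $E^1$-page term by term using the geometric description obtained in the previous lemmas. Recall that $\E^1_{r,s}(d) = H^{N_d^*+r-s-1}_c(\SZ(d)_r\setminus\SZ(d)_{r-1},\Z)$, so the claim is equivalent to a vanishing statement for the compactly supported cohomology of the strata $\SZ(d)_r\setminus\SZ(d)_{r-1}$. First I would split into the two cases $r \leq \lfloor\frac{d+1}{2}\rfloor$ and $r > \lfloor\frac{d+1}{2}\rfloor$, since the geometry of the strata is genuinely different in the two ranges (non-degenerate versus degenerate part of the resolution).

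For $1 \leq r \leq \lfloor\frac{d+1}{2}\rfloor$, I would invoke Lemma \ref{lemma: vector bundle*}: the stratum is the total space of a real vector bundle $\xi_{d,r}$ of rank $l_{d,r}^* = N_d^* - nr - 1$ over the configuration space $C_r(\R^m)$. Since $C_r(\R^m)$ is an open manifold of dimension $rm$ and the total space of a rank-$k$ bundle over a base of dimension $D$ is a (possibly non-orientable) manifold of dimension $D + k$, compactly supported cohomology $H^j_c$ vanishes for $j > rm + l_{d,r}^*$. Equivalently, using the Thom isomorphism $H^j_c(\text{tot}(\xi_{d,r}),\Z) \cong H^{j - l_{d,r}^*}_c(C_r(\R^m),(\pm\Z)^{\otimes\text{something}})$, the group is zero unless $j - l_{d,r}^* \leq rm$. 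Plugging in $j = N_d^*+r-s-1$ and $l_{d,r}^* = N_d^*-nr-1$ gives $r - s + nr \leq rm$, i.e. $s \geq r(n+1-m) = r(n+1) - rm$, which is exactly the stated bound. (The fact that the bundle need not be orientable only affects the coefficient system, not the dimension range, so the vanishing argument is unaffected.)

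For $r > \lfloor\frac{d+1}{2}\rfloor$ I expect the main obstacle. Here the Veronese-type embedding $\psi_d^*$ no longer guarantees that $r$ points span an $(r-1)$-simplex, so the stratum is not simply a vector bundle over $C_r(\R^m)$; it is built from pieces indexed by the combinatorial type of the degenerate simplex $\sigma_\psi(\{y_1,\dots,y_r\})$. The plan would be to stratify $\SZ(d)_r\setminus\SZ(d)_{r-1}$ further according to the affine dimension $e$ of the span of the $\psi_d^*(y_k)$ (so $e \leq \min(r-1, M_d - 1)$, and $e < r-1$ precisely in the degenerate range), and to check the bound separately on each piece. On such a piece the fiber over a configuration is the product of the interior of an $e$-simplex with a real vector space whose dimension is controlled by how many independent linear conditions the $r$ vanishing constraints impose — which is at most the number of distinct "directions" captured by $\psi_d^*$, and in any case at least something growing like the true rank minus a correction. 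The key point to extract is that passing to the degenerate range only makes the relevant strata \emph{smaller}-dimensional (fewer independent conditions cost fewer codimensions, but the simplex dimension $e$ also drops by the same amount), so the inequality $s \geq r(n+1-m)$ is preserved or improved. I would formalize this by an induction on $r$ or by a direct dimension count on each substratum, comparing with the non-degenerate model via Lemma \ref{lemma: simp}(ii) and Lemma \ref{lemma: simplex*}(ii). The bookkeeping of exactly how the codimension in $\prod B_k$ and the simplex dimension trade off is the delicate part; everything else is a routine Thom-isomorphism-plus-dimension argument.
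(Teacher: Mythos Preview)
Your approach for $r \leq \lfloor\frac{d+1}{2}\rfloor$ is fine and matches the paper, but the case split is unnecessary and your proposed treatment of large $r$ contains a genuine error.

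The paper's proof is a single uniform dimension count, with no split at $\lfloor\frac{d+1}{2}\rfloor$. The key observation you are missing is this: if $(y,\omega)\in\SZ(d)_r\setminus\SZ(d)_{r-1}$, then the $r$ points $\psi_d^*(y_1),\dots,\psi_d^*(y_r)$ are \emph{automatically} in general position. Indeed, if they spanned an affine subspace of dimension $e<r-1$, then by Carath\'eodory's theorem $\omega$ would lie in the convex hull of at most $e+1\le r-1$ of them, forcing $(y,\omega)\in\SZ(d)_{r-1}$. Once general position is known, the $r$ vanishing conditions on each $f_k$ are linearly independent (this is exactly what the coordinates of $\psi_d^*$ encode), and the straightforward count
\[
\dim\big(\SZ(d)_r\setminus\SZ(d)_{r-1}\big)\ \le\ rm+(r-1)+\big(N_d^*-r(n+1)\big)=N_d^*-r(n-m)-1
\]
gives the bound for every $r$ at once.

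Two further remarks. First, you misread the threshold: Lemma~\ref{lemma: simplex*}(i) guarantees that any $r\le d+1$ distinct points map to an $(r-1)$-simplex, not merely $r\le\lfloor\frac{d+1}{2}\rfloor$. The stricter bound $\lfloor\frac{d+1}{2}\rfloor$ in Lemma~\ref{lemma: vector bundle*} is there for a different reason (uniqueness of the simplex containing $\omega$, which needs $2r$ points in general position), and is irrelevant to the present dimension estimate. Second, your proposed trade-off in the ``degenerate'' regime is actually backwards. If the $r$ images spanned only an $e$-dimensional affine space, the simplex dimension drops by $r-1-e$, but each of the $n+1$ polynomials loses $r-1-e$ independent constraints, so the fibre dimension \emph{rises} by $(n+1)(r-1-e)$; the net effect would increase the stratum dimension by roughly $(n-1)(r-1-e)$ and destroy the bound. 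Fortunately such pieces are empty by the Carath\'eodory argument above, but your outline does not recognise this, and the heuristic you give would not salvage the inequality.
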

\begin{proof}
An element of $\SZ(d)_r\setminus\SZ(d)_{r-1}$ is determined by the following $3$ pieces of data:

\begin{itemize}
\item
$r$ distinct points $\{y_1,\cdots ,y_r\}\in C_r(\R^m)$, whose 
corresponding images of $\psi_d$,
${\cal F}=\{\psi_d^*(y_1),\cdots ,\psi_d^*(y_r)\}$  are in a general position.
\item
A point in the convex hull spanned by these points in ${\cal F}$.
\item
A collection of $(n+1)$-polynomials $f_0,\dots, f_n$, which all vanish at the $r$ points $\{(y_1,1)\cdots ,(y_r,1)\}.$
\end{itemize}
Note that the points ${\cal F}=\{\psi_d^*(y_1),\cdots ,\psi_d^*(y_r)\}$ must be in a general position (otherwise our point would belong to $\SZ(d)_{r-1}$).
Hence, the corresponding vectors are linearly independent (note that the last coordinate is always $1$).
The coordinates of these vectors are precisely the monomials appearing in the equations $f_i(y_j,1)=0$, $(0\leq i\leq n,1\leq j\leq r)$, so for  each $0\leq i \leq n$, the system of equations in the coefficients of the polynomial  $f_i$, arising from the conditions that it vanishes at the points $\{(y_1,1), \dots, (y_r,1)\}$, consists of linearly independent equations. Thus, we obtain the inequality
\begin{eqnarray*}
\dim \big(\SZ(d)_r\setminus\SZ(d)_{r-1}\big)
&\leq & rm + (r-1) + \big(N_d^*-r(n+1)\big) 
\\
&=& N_d^*-r(n-m)-1.
\end{eqnarray*}
Since
$N_d^*-r(n-m)-1\geq N_d^*+r-s-1$
if and only if
$s\geq r(n+1-m),$
the assertion follows from (\ref{SS}).
\end{proof}
\begin{lemma}\label{lemma: range*}
If $1\leq r\leq \lfloor \frac{d+1}{2}\rfloor$, there is a natural isomorphism
$$
\E^1_{r,s}(d)\cong
H_{s-(n-m+1)r}(C_r(\R^m),(\pm \Z)^{\otimes (n-m)}).
$$
Here the meaning of  $(\pm \Z)^{\otimes (n-m)}$  is the same as in {\rm \cite{Va}}.
\end{lemma}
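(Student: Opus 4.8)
**Proof proposal for Lemma \ref{lemma: range*}.**

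The plan is to exploit the vector bundle structure obtained in Lemma \ref{lemma: vector bundle*}. For $1\leq r\leq \lfloor\frac{d+1}{2}\rfloor$ we know that $\SZ(d)_r\setminus\SZ(d)_{r-1}$ is the total space of a real vector bundle $\xi_{d,r}$ of rank $l^*_{d,r}=N_d^*-nr-1$ over $C_r(\R^m)$. The recipe defining $\E^1_{r,s}(d)$ is $\E^1_{r,s}(d)=H^{N_d^*+r-s-1}_c(\SZ(d)_r\setminus\SZ(d)_{r-1},\Z)$, so the first step is to apply the Thom isomorphism for compactly supported cohomology: $H^{*}_c(\mathrm{total\ space}\ \xi_{d,r})\cong H^{*-l^*_{d,r}}_c(C_r(\R^m),\Z_{\xi})$, where $\Z_{\xi}$ is the orientation local system of the bundle $\xi_{d,r}$. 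Since $C_r(\R^m)$ is an open manifold of dimension $rm$, I would then convert compactly supported cohomology back to ordinary homology via Poincaré duality: $H^{q}_c(C_r(\R^m),\Z_{\xi})\cong H_{rm-q}(C_r(\R^m),\Z_{\xi}\otimes\Z_{\mathrm{or}})$, where $\Z_{\mathrm{or}}$ is the orientation local system of $C_r(\R^m)$ itself. Composing these two substitutions and bookkeeping the degree shifts should give $\E^1_{r,s}(d)\cong H_{rm - (N_d^*+r-s-1-l^*_{d,r})}(C_r(\R^m),\mathcal{L})$ for an appropriate local system $\mathcal{L}$; inserting $l^*_{d,r}=N_d^*-nr-1$ collapses the homological degree to $rm-(N_d^*+r-s-1-N_d^*+nr+1)=s-(n-m+1)r$, exactly as claimed.

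The remaining task is the identification of the local system $\mathcal{L}$ with $(\pm\Z)^{\otimes(n-m)}$. Here I would argue fiberwise over a loop in $C_r(\R^m)$ that transposes two of the $r$ points. The fiber of $\xi_{d,r}$ was shown in Lemma \ref{lemma: vector bundle*} to be the product of an open $(r-1)$-simplex $\mathrm{int}(\sigma_\psi(\mathbf{y}))$ with the real vector space $V$ of codimension-$r$ conditions inside $B_0\times\cdots\times B_n$ (dimension $N_d^*-(n+1)r$). Transposing two points acts on the simplex factor by an odd permutation of vertices (a reflection), contributing a sign; it acts on the orientation of $C_r(\R^m)$ by the sign of that transposition on the $m$-tuples of coordinates, i.e. by $(-1)^m$; and it acts on $V$ by permuting the $r$ linear evaluation conditions within each of the $n+1$ polynomial blocks $B_k$, contributing $(-1)$ per block, hence $(-1)^{n+1}$ total. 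Combining: the simplex sign and the $C_r$-orientation sign combine with the Thom twist on $V$ to leave a net monodromy of $(-1)^{(n+1)} \cdot (-1) \cdot (-1)^{m} = (-1)^{n+m}=(-1)^{n-m}$ acting on the coefficient, which is precisely the defining monodromy of $(\pm\Z)^{\otimes(n-m)}$ in the sense of \cite{Va}. (One should also check that a loop moving a single point around a non-contractible loop of $\R^m$ — there are none, $\R^m$ being contractible — contributes nothing, so the only monodromy is through the symmetric group, confirming that $\mathcal{L}$ is pulled back from the sign representation to the appropriate tensor power.)

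The main obstacle I expect is the careful sign bookkeeping in the second paragraph: there are three distinct sources of monodromy (the simplex orientation from the Thom class of the "simplicial" direction, the orientation local system of the configuration space $C_r(\R^m)$ from Poincaré duality, and the Thom class of the linear part $V$ of the bundle), and one must track them simultaneously and consistently with the sign conventions of \cite{Va}. A convenient way to organize this is to note that the total bundle $\SZ(d)_r\setminus\SZ(d)_{r-1}\to C_r(\R^m)$ already packages the simplex direction and $V$ together; so it suffices to compute the orientation character of this one bundle together with that of the base. Everything else — the two isomorphisms (Thom and Poincaré–Lefschetz) and the degree arithmetic — is routine once Lemma \ref{lemma: vector bundle*} is in hand, so the statement should follow without further difficulty.
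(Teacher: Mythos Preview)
Your proposal is correct and follows essentially the same route as the paper: identify the one-point compactification of $\SZ(d)_r\setminus\SZ(d)_{r-1}$ with the Thom space of $\xi_{d,r}$, apply the Thom isomorphism, then Poincar\'e duality on the open $rm$-manifold $C_r(\R^m)$, and do the arithmetic $rm-\big((N_d^*+r-s-1)-l_{d,r}^*\big)=s-(n-m+1)r$. The paper's proof is in fact terser than yours---it simply invokes Thom and Poincar\'e duality and records the degree shift, deferring the meaning of the twisted coefficients to \cite{Va} without spelling out the monodromy computation you carry out in your second paragraph; your explicit sign bookkeeping (simplex factor, linear-conditions factor, base orientation) is a welcome elaboration rather than a different method.
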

\par\noindent{\it Proof. }
Suppose that $1\leq r\leq \lfloor \frac{d+1}{2}\rfloor$.
Then
by Lemma \ref{lemma: vector bundle*}, there is a
homeomorphism
$(\SZ(d)_r\setminus\SZ(d)_{r-1})_+\cong T(\xi_{d,r}),$
where $T(\xi)$ denotes the Thom complex of the vector bundle
$\xi$.
Since $N_d^*+r-s-1-l_{d,r}^*=(n+1)r-s$
and $rm-\{(n+1)r-s\}=s-(n-m+1)r$,
by using the Thom isomorphism and Poincar\'e duality,
we obtain a natural isomorphism 
$$
\E^1_{r,s}(d)
\cong H^{N_d^*+r-s-1}(T(\xi_{d,r}),\Z)
\cong
H_{s-(n-m+1)r}(C_r(\R^m), (\pm \Z)^{\otimes (n-m)}).
\qed
$$
\par\vspace{2mm}\par

\subsection{Comparison with Vassiliev's spectral sequence for $\Omega^mS^n.$}

Recall the spectral sequence constructed by V. Vassiliev
\cite[page 109--115]{Va}.
From now on, we will assume that $m<n$ and $X$ is 
a finite $m$-dimensional simplicial complex $C^{\infty}$-imbedded in $\R^L$.
Considering $S^n$ and $X$ as subspaces $S^n\subset \R^{n+1},  X\subset \R^L$, 
we identify $\Map (X,S^n)$ with the space
$\Map (X,\R^{n+1}\setminus \{{\bf 0}_{n+1}\})$.
We also choose and fix a map
$\varphi :X\to \R^{n+1}\setminus\{{\bf 0}_{n+1}\}$.
Observe that $\Map (X,\R^{n+1})$ is a linear space
and consider
the complements
${\frak A}_m^n(X)=\Map (X,\R^{n+1})\setminus \Map (X,S^n)$ and
$\tilde{\frak A}_m^n(X)=\Map^* (X,\R^{n+1})\setminus \Map^*(X,S^n).$
\par
Note that  ${\frak A}_m^n(X)$ consists of all continuous maps
$f:X\to \R^{n+1}$ passing through ${\bf 0}_{n+1}.$
We will denote by $\Theta^k_{\varphi}(X)\subset \Map (X,\R^{n+1})$ the subspace
consisting of all maps $f$ of the forms $f=\varphi +p$,
where $p$ is the restriction to $X$ of a polynomial map
$\R^L\to \R^{n+1}$ of degree $\leq k$.
Let
$\Theta^k_X\subset \Theta^k_{\varphi}(X)$ denote the subspace
consisting of all $f\in \Theta_{\varphi}^k(X)$
passing through ${\bf 0}_{n+1}.$ 
In \cite[page 111-112]{Va} Vassiliev uses the space 
$\Theta^k(X)$ as a finite dimensional approximation of ${\frak A}^n_m(X)$.\footnote
{Note that the proof 
 of this fact given by Vassiliev, makes use of the Stone-Weierstrass theorem, 
 so, although we are now not using the stable result of Section \ref{section:stable}, something like it is also implicitly  involved here. }
\par
Let $\tilde{\Theta}^k_X$ denote the subspace of $\Theta^k_X$ consisting of all maps 
$f\in \Theta^k_X$ which preserve
the base points.
By a variation of the preceding argument, Vassiliev also shows that  
$\tilde{\Theta}^k_X$ can be used as a finite dimensional approximation of  
$\tilde{\frak A}_m^n(X)$ \cite[page 112]{Va}.
\par
Let ${\cal X}_k\subset \tilde{\Theta}^k_X\times \R^{L}$ denote the 
subspace consisting of all pairs 
$(f,\alpha)\in\tilde{\Theta}^k_X \times \R^{L}$
such that $f(\alpha )={\bf 0}_{n+1}$, and
let $p_k:{\cal X}_k\to \tilde{\Theta}^k_X$ be the projection onto the first factor.
Then,  by making use of (non-degenerate) simplicial resolutions of the surjective maps 
$\{p_k:k\geq 1\}$,
one can construct a geometric resolution 
$\{\tilde{\frak A}_m^n(X)\}$ of $\tilde{\frak A}_m^n,$ 
whose cohomology is naturally isomorphic to the homology of $\Map^*(X,S^n)$. 
From the natural filtration 
$\dis F_1\subset F_2\subset F_3\subset 
\cdots \subset \bigcup_{k=1}^{\infty}F_k=\{\tilde{\frak A}_m^n(X)\},$
we obtain the associated spectral sequence:
\begin{equation}\label{SSS}
\{E^t_{r,s},d^t:
E^t_{r,s}\to
E^t_{r+t,s+t-1}\}
\Rightarrow
H_{s-r}(\Map^*(X,S^n),\Z).
\end{equation}
The following result follows easily from
\cite[Theorem 2 (page 112) and (32) (page 114)]{Va}. 
\begin{lemma}[\cite{Va}]
\label{lemma: Va}
Let $2\leq m<n$ be integers and let
$X$ be a finite $m$-dimensional simplicial complex
with a fixed base point $x_0\in X$.
\begin{enumerate}
\item[$\I$]
If $r\geq 1$,
$E^1_{r,s}=
H_{s-(n-m+1)r}(C_r(X\setminus\{x_0\}),(\pm \Z)^{\otimes (n-m)}).$
\item[$\II$]
If $r<0$ or $s<0$ or $s<(n-m+1)r$, then $E^1_{r,s}=0$.
\item[$\III$]
If $X=S^m,$ then,
for any $t\geq 1$,
$d^t=0:
E^t_{r,s}\to
E^t_{r+t,s+t-1}$ for all $(r,s)$,
and $E^1_{r,s}=E^{\infty}_{r,s}$.
Moreover,
for any $k\geq 1$, the extension problem for
$Gr (H_k(\Omega^mS^n,\Z))=
\bigoplus_{r=1}^{\infty}E^{\infty}_{r,r+k}$
is trivial and 
there is an isomorphism
$$
H_{k}(\Omega^mS^n,\Z)\cong \bigoplus_{r=1}^{\infty}E^1_{r,k+r}=
\bigoplus_{r=1}^{\infty}
H_{k-(n-m)r}(C_r(\R^m),(\pm \Z)^{\otimes (n-m)}).
\qed$$
\end{enumerate}
\end{lemma}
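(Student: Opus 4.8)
The plan is to obtain all three assertions directly from Vassiliev's analysis of the geometric resolution $\{\tilde{\frak A}_m^n(X)\}$ in \cite{Va}, merely re-indexing his spectral sequence so that it matches (\ref{SSS}). First I would recall the structure of the $r$-th stratum $F_r\setminus F_{r-1}$: by Vassiliev's computation — which runs in complete analogy with the proofs of our Lemmas~\ref{lemma: vector bundle*} and \ref{lemma: range*}, with $\R^m$ replaced by $X\setminus\{x_0\}$ — this stratum is, in the limit over the approximating degrees $k$ of $\tilde{\Theta}^k_X$, the total space of a vector bundle over the configuration space $C_r(X\setminus\{x_0\})$ whose fiber is an open $(r-1)$-simplex (recording the barycentric coordinates of the resolving point) times the affine space of maps vanishing at the $r$ chosen points. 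Passing to one-point compactifications, applying the Thom isomorphism, and then dualizing over the base would identify $E^1_{r,s}$ with $H_{s-(n-m+1)r}\big(C_r(X\setminus\{x_0\}),(\pm\Z)^{\otimes(n-m)}\big)$, the coefficient system combining the orientation local system of the simplex under permutations of its vertices with that of the $(n-m)$ excess normal directions. This is exactly \cite[Theorem~2, p.~112]{Va} transcribed into our notation, and it gives $\I$.

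Assertion $\II$ is then immediate from $\I$ together with the shape of the resolution. There is no stratum for $r<0$, so $E^1_{r,s}=0$ there; a stratum contributes only in nonnegative total degree, so $E^1_{r,s}=0$ when $s<0$; and for $r\ge1$ the homology group produced by $\I$ vanishes whenever $s-(n-m+1)r<0$. This is the vanishing recorded in \cite[(32), p.~114]{Va}.

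For $\III$ I would specialize to $X=S^m$ with $x_0$ its basepoint, so that $X\setminus\{x_0\}\cong\R^m$, $C_r(X\setminus\{x_0\})$ is a smooth open $rm$-manifold, and $\Map^*(S^m,S^n)$ is literally $\Omega^mS^n$; part $\I$ then reads $E^1_{r,s}\cong H_{s-(n-m+1)r}(C_r(\R^m),(\pm\Z)^{\otimes(n-m)})$, and re-indexing by $s=k+r$ places the $r$-th column in total degree $k$. The two remaining claims — that $d^t=0$ for every $t\ge1$, hence $E^1=E^\infty$, and that the filtration of $H_k(\Omega^mS^n,\Z)$ splits — are the one genuinely substantial input, and I would simply quote them from \cite[Theorem~2 and (32)]{Va}, remarking only that they reflect the fact that for $X=S^m$ the filtration of $\Omega^mS^n$ by the number of resolving points splits after a single suspension (the Snaith/May--Milgram stable splitting of $\Omega^mS^n$), so that the associated homology spectral sequence degenerates and its extension problems are trivial. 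The main obstacle is therefore not any individual step but the bookkeeping beneath them: matching Vassiliev's filtration index to our $r$, keeping track of the two sources of sign twisting so that the coefficients come out as $(\pm\Z)^{\otimes(n-m)}$, deciding which homology theory of $C_r(X\setminus\{x_0\})$ is meant in $\I$ when $X$ is not a manifold, and checking that passing to the direct limit over the approximating dimensions of $\tilde{\Theta}^k_{S^m}$ leaves the identifications undisturbed — all routine once \cite{Va} is in hand, so that no argument beyond careful translation is required.
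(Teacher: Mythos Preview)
Your proposal is correct and matches the paper's approach: the paper does not give its own proof but simply states that the lemma ``follows easily from \cite[Theorem 2 (page 112) and (32) (page 114)]{Va}'' and closes with a \qed, so your plan of extracting $\I$--$\III$ from Vassiliev's computation of the strata via Thom isomorphism and duality, together with the degeneration/splitting statement for $X=S^m$, is exactly the intended (and only) argument. Your additional remark tying $\III$ to the Snaith/May--Milgram stable splitting is a helpful gloss but not something the paper itself supplies.
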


Although we are really interested in the inclusion map $i_d^{\p}$ of Theorem \ref{thm: II}, we will first define another map $i_d^{\p\p}$, for which the analogous result is easier to prove. We will then deduce the result for $i_d^{\p}$ from the one for $i_d^{\p\p}$.
\par\vspace{2mm}
\par\noindent{\bf Definition. }
Let $i_d^{\p\p}:A_d(m,n;g)\to \Omega^m (\R^{n+1}\setminus \{{\bf 0}\})\simeq \Omega^mS^n$ by given by
$$
i_d^{\p\p}({\bf f})({\bf x})=
\big(f_0(x_0,\cdots ,x_m),\cdots\cdots , f_n(x_0,\cdots,x_m)\big)
$$
for $({\bf f},{\bf x})=((f_0,\cdots ,f_n),(x_0,\cdots ,x_m))\in A_d(m,n;g)\times S^m$.
\par\vspace{2mm}\par
We will prove the analogue of Theorem \ref{thm: II} 
for the map $i_d^{\p\p}$
by applying the spectral sequence
(\ref{SSS}) to
the case $X=S^m$.
\begin{thm}\label{thm: II**}
Let $2\leq m<n$ be integers and
$g\in\Alg_d^*(\RP^{m-1},\RP^n)$ be a fixed algebraic map.
The map
$i_d^{\p\p}:A_d(m,n;g)\to  \Omega^mS^n$
is a homotopy equivalence through dimension $D(d;m,n)$ if $m+2\leq n$
and a homology equivalence through dimension $D(d;m,n)$ if $m+1=n$. 
\end{thm}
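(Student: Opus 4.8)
The plan is to prove Theorem \ref{thm: II**} by constructing a spectral sequence converging to $H_*(A_d(m,n;g),\Z)$ from the simplicial resolution of the discriminant (already assembled as (\ref{SS}) in the preceding subsection) and comparing it term-by-term with Vassiliev's spectral sequence (\ref{SSS}) for $X = S^m$, using the map $i_d^{\p\p}$ to induce a morphism between the two. First I would make precise the morphism of spectral sequences: the map $i_d^{\p\p}$ sends a tuple ${\bf f} = (f_0,\dots,f_n) \in A_d(m,n;g)$ to the loop ${\bf x}\mapsto (f_0({\bf x}),\dots,f_n({\bf x}))$ on $S^m$, and a common real root $({\bf x},1)$ of the $f_k$ corresponds to a point where this loop passes through ${\bf 0}_{n+1}$; thus $i_d^{\p\p}$ carries the discriminant $\Sigma_d^*$ into $\tilde{\frak A}_m^n(S^m)$ compatibly with the tautological normalizations and their filtrations. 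The point $(y_k,1)$ with $y_k \in \R^m$ corresponds under stereographic projection to a non-basepoint point of $S^m$, so on $E^1$-pages the comparison is between configuration spaces $C_r(\R^m)$ on both sides (Lemma \ref{lemma: range*} on the algebraic side, Lemma \ref{lemma: Va}(i) with $X\setminus\{x_0\} \approx \R^m$ on Vassiliev's side).

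The key steps, in order, are: (1) identify the $E^1$-terms. By Lemma \ref{lemma: range*}, $\E^1_{r,s}(d) \cong H_{s-(n-m+1)r}(C_r(\R^m),(\pm\Z)^{\otimes(n-m)})$ for $1 \le r \le \lfloor\frac{d+1}{2}\rfloor$, which matches Vassiliev's $E^1_{r,s}$ exactly on this range; outside this range the two may differ, but the discriminant resolution only ``sees'' $r$ up to $\lfloor\frac{d+1}{2}\rfloor$ effectively, which is the source of the finite approximation bound. (2) Check that the comparison map is an isomorphism on $E^1$ in the relevant range --- this amounts to checking it is the identity on configuration spaces after the stereographic identification, together with compatibility of the local coefficient systems (both $(\pm\Z)^{\otimes(n-m)}$). (3) Establish the range: using Lemma \ref{lemma: range**} (all nonzero $\E^1_{r,s}(d)$ have $s \ge r(n+1-m)$) and the analogous Lemma \ref{lemma: Va}(ii), a diagram chase shows both spectral sequences agree and the induced map on $H_k$ is an isomorphism for $k \le D(d;m,n)$ and surjective one degree higher; the bound $D(d;m,n) = (n-m)(\lfloor\frac{d+1}{2}\rfloor+1)-1$ comes from the smallest total degree $s-r$ of an $E^1$-entry with $r = \lfloor\frac{d+1}{2}\rfloor+1$, which is the first place the truncation of the resolution can cause a discrepancy. (4) Upgrade homology to homotopy when $m+2 \le n$: here both $A_d(m,n;g)$ (by the Fact, since it has codimension $n-m+1 \ge 3$ in affine space) and $\Omega^m S^n$ are simply connected, so a homology equivalence through a range is a homotopy equivalence through that range by the relative Hurewicz theorem / Whitehead's theorem applied to the mapping cylinder.

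The main obstacle I expect is step (3): controlling the differentials and making the finite-range comparison rigorous. On the algebraic side, Lemma \ref{lemma: vector bundle*} only gives the vector bundle structure (and hence the clean $E^1$ computation) for $r \le \lfloor\frac{d+1}{2}\rfloor$; for larger $r$ the simplicial resolution becomes genuinely degenerate in an uncontrolled way, so one cannot simply say the two spectral sequences are isomorphic outside a range --- one must instead argue that the first possible discrepancy occurs in total degree $> D(d;m,n)$, which requires the dimension estimate of Lemma \ref{lemma: range**} to push all ``bad'' contributions above the threshold. One must also verify that the comparison map commutes with all higher differentials $\tilde d^t$ versus $d^t$ in the overlap range --- this follows formally from naturality of the constructions (both spectral sequences arise from filtered spaces and a filtration-preserving map between them), but needs to be stated carefully. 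A secondary subtlety is that Vassiliev's spectral sequence for general $X$ has possibly nontrivial differentials, but for $X = S^m$ Lemma \ref{lemma: Va}(iii) tells us all $d^t = 0$ and there are no extension problems; transporting this degeneracy across the comparison then forces the algebraic spectral sequence to degenerate in the range too, which is what lets us read off $H_*(A_d(m,n;g),\Z)$ and conclude. Once the two $E^1$-pages and their differentials are matched in the range $s - r \le D(d;m,n)$, the theorem follows.
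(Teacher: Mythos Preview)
Your proposal is correct and follows essentially the same approach as the paper's proof: construct a filtration-preserving map from the (non-degenerate) simplicial resolution of $\Sigma_d^*$ to Vassiliev's resolution for $X=S^m$, obtain a morphism of spectral sequences, use Lemmas~\ref{lemma: range*} and~\ref{lemma: Va} together with naturality of the Thom isomorphism to see that $\tilde\theta^1_{r,s}$ is an isomorphism for $r\le\lfloor\tfrac{d+1}{2}\rfloor$, use the vanishing from Lemmas~\ref{lemma: range**} and~\ref{lemma: Va}(ii) to handle $r>\lfloor\tfrac{d+1}{2}\rfloor$ in total degree $\le D(d;m,n)$, and finally upgrade to a homotopy equivalence via simple connectivity when $m+2\le n$. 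One small remark: the paper makes the passage to a filtration-preserving map explicit by first passing through the non-degenerate resolution $\tilde{\cal Z}^\Delta(d)$ (via Lemma~\ref{lemma: simp}(ii)) before mapping to Vassiliev's resolution, since the latter is built from non-degenerate data; your phrase ``compatibly with the tautological normalizations'' should be unpacked in exactly this way.
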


\begin{proof}
From now on, we identify $\Omega^mS^n=\Omega^m(\R^{n+1}\setminus \{{\bf 0}\})$ and
consider the spectral sequence (\ref{SSS}) for $X=S^m$.
Note that the image of the  map $i_d^{\p\p}$ lies 
in a space of mappings that arise from restrictions of polynomial mappings
$\R^m\to \R^n$.
Since  $\tilde{\cal Z}^{\Delta}(d)$ is a non-degenerate simplicial resolution,  the map
 $i_d^{\p\p}$ naturally extends to a filtration
preserving map 
$^{\p}{\tilde{\pi}}: \tilde{\cal Z}^{\Delta} (d)\to \{\tilde{\frak A}_m^n(S^n)\}$
between resolutions.
By Lemma \ref{lemma: simp} there is a filtration
preserving homotopy equivalence 
$q^{\Delta} :\tilde{\cal Z}^{\Delta} (d)\stackrel{\simeq}{\rightarrow}\SZ (d)$.
Then the filtration preserving maps
$\begin{CD}\dis
\SZ (d) @<q^{\Delta}<\simeq< \tilde{\cal Z}^{\Delta} (d) 
@>^{\p}{\tilde{\pi}}>> \{\tilde{\frak A}_m^n(S^n)\}
\end{CD}$
 induce a homomorphism of spectral sequences
$\{\tilde{\theta}^t_{r,s}:\E^t_{r,s}(d)\to E^t_{r,s}\},$
where $\{\E^t_{r,s}(d),\tilde{d}^t\}\Rightarrow H_{s-r}(A_d(m,n;g),\Z)$ and
$\{E^t_{r,s},d^t\}\Rightarrow H_{s-r}(\Omega^mS^n,\Z).$
\par
Note now that,
by Lemma \ref{lemma: simplex*},
Lemma \ref{lemma: range*}, 
Lemma \ref{lemma: Va} and
the naturality of Thom isomorphism,
for $r\leq \lfloor \frac{d+1}{2}\rfloor$
 there is a
commutative diagram
\begin{equation}\label{Thom2}
\begin{CD}
\E^1_{r,s}(d) 
@>T>\cong> H_{s-r(n-m+1)}(C_r(\R^m),(\pm \Z)^{\otimes (n-m)})
\\
@V\tilde{\theta}^1_{r,s}VV  \Vert @.
\\
E^1_{r,s} 
@>T>\cong> H_{s-r(n-m+1)}(C_r(\R^m),(\pm \Z)^{\otimes (n-m)} )
\end{CD}
\end{equation}
Hence, if $r\leq \lfloor \frac{d+1}{2}\rfloor$,
$\tilde{\theta}^1_{r,s}:\E^1_{r,s}(d)\stackrel{\cong}{\rightarrow}
E^1_{r,s}$ and thus so is
$\tilde{\theta}^{\infty}_{r,s}:\E^{\infty}_{r,s}(d)\stackrel{\cong}{\rightarrow}
E^{\infty}_{r,s}$.
\par
Next, consider the number
$$
D_{min}=\min\{
N\vert \ N\geq s-r,\ 
s\geq (n+1-m)r,\ 
1\leq r<\lfloor \frac{d+1}{2}\rfloor +1
\}.
$$
Clearly  $D_{min}$ is the
largest integer $N$ which satisfies the inequality
$(n+1-m)r> r+N$
for $r=\lfloor\frac{d+1}{2}\rfloor +1$, hence
\begin{equation}\label{Dmin}
D_{min}=
(n-m)(\lfloor\frac{d+1}{2}\rfloor +1)-1=D(d;m,n).
\end{equation}
We note that, for dimensional reasons,
$\tilde{\theta}^{\infty}_{r,s}:
\E^{\infty}_{r,s}(d)\stackrel{\cong}{\rightarrow} 
E^{\infty}_{r,s}$ is always
an isomorphism when
$r\leq \lfloor \frac{d+1}{2}\rfloor$ and $s-r\leq D(d;m,n)$.
Remark also that by Lemma \ref{lemma: range**},
$\tilde{E}^1_{r,s}(d)=E^1_{r,s}=0$ when
$s-r\leq D(d;m,n)$ and $r>\lfloor\frac{d+1}{2}\rfloor$.
Hence, we have:
\begin{enumerate}
\item[(\ref{Dmin}.1)]
If  $s\leq r+D(d;m,n)$, 
$\tilde{\theta}^{\infty}_{r,s}:
\E^{\infty}_{r,s}(d)\stackrel{\cong}{\rightarrow} 
E^{\infty}_{r,s}$ is always
an isomorphism.
\end{enumerate}
We now see that
$i_d^{\p\p}$ 
is a homology equivalence through dimension $D(d;m,n)$.
If $m+2\leq n$ then, since $A_d(m,n;g)$ and $\Omega^mS^n$ are simply connected,
$i_d^{\p\p}$ is a homotopy equivalence through dimension $D(d;m,n)$.
Thus, (i) is proved.
\end{proof}

\begin{cor}\label{cor: II*}
Let $2\leq m<n$ be integers,
$g\in\Alg_d^*(\RP^{m-1},\RP^n)$,
and let $\Bbb F =\Z/p$ $($p: prime$)$ or $\Bbb F =\Bbb Q$.
Then the map $i_d^{\p\p}:A_d(m,n;g)\to \Omega^mS^n$ induces an isomorphism on
the homology group $H_k(\cdot,\Bbb F)$ for any $1\leq k\leq D(d;m,n)$.
\end{cor}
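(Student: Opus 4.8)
The plan is to deduce Corollary~\ref{cor: II*} directly from Theorem~\ref{thm: II**} by passing from integral coefficients to field coefficients via the universal coefficient theorem. First I would recall that Theorem~\ref{thm: II**} gives that $i_d^{\p\p}:A_d(m,n;g)\to \Omega^mS^n$ is a homology equivalence through dimension $D(d;m,n)$, i.e.\ the induced map $(i_d^{\p\p})_*:H_k(A_d(m,n;g),\Z)\to H_k(\Omega^mS^n,\Z)$ is an isomorphism for every $k\leq D(d;m,n)$. Note that this is true regardless of whether $m+2\leq n$ or $m+1=n$, since in both cases Theorem~\ref{thm: II**} at least asserts a homology equivalence in that range.

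Next I would invoke the naturality of the universal coefficient theorem for homology. For any coefficient field $\Bbb F$ (here $\Bbb F=\Z/p$ or $\Bbb F=\Q$) and any space $Z$ there is a natural short exact sequence
\begin{equation*}
0\to H_k(Z,\Z)\otimes \Bbb F\to H_k(Z,\Bbb F)\to \mathrm{Tor}(H_{k-1}(Z,\Z),\Bbb F)\to 0,
\end{equation*}
and the map $i_d^{\p\p}$ induces a morphism between the corresponding sequences for $Z=A_d(m,n;g)$ and $Z=\Omega^mS^n$. Since $(i_d^{\p\p})_*$ is an isomorphism on $H_k(\cdot,\Z)$ for all $k\leq D(d;m,n)$, it induces isomorphisms on both the outer terms $H_k(\cdot,\Z)\otimes\Bbb F$ (for $k\leq D(d;m,n)$) and $\mathrm{Tor}(H_{k-1}(\cdot,\Z),\Bbb F)$ (for $k-1\leq D(d;m,n)$, in particular for $k\leq D(d;m,n)$). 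By the five lemma applied to the morphism of short exact sequences, $(i_d^{\p\p})_*:H_k(A_d(m,n;g),\Bbb F)\to H_k(\Omega^mS^n,\Bbb F)$ is an isomorphism for every $1\leq k\leq D(d;m,n)$.

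This argument is entirely formal, so there is no real obstacle; the only point requiring a word of care is checking that the isomorphism range for the $\mathrm{Tor}$ term is not shifted out of range. Since $\mathrm{Tor}(H_{k-1}(\cdot,\Z),\Bbb F)$ depends on $H_{k-1}$, and $k-1<k\leq D(d;m,n)$, the integral isomorphism in degree $k-1$ is available, so no loss occurs. One could also observe that the statement is an immediate consequence of the general fact that a map inducing isomorphisms on integral homology through dimension $N$ induces isomorphisms on homology with arbitrary coefficients through dimension $N$; I would phrase the proof that way for brevity, citing the universal coefficient theorem as the mechanism.
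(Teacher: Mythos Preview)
Your argument is correct: Theorem~\ref{thm: II**} already gives an integral homology equivalence through dimension $D(d;m,n)$ in both cases $m+2\le n$ and $m+1=n$, and the universal coefficient theorem (with the five lemma, exactly as you set it up) transfers this to any field $\Bbb F$. The range check on the $\mathrm{Tor}$ term is handled correctly.

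This is, however, not the route the paper takes. The paper's proof does not pass through the integral statement at all; instead it observes that the entire spectral sequence comparison carried out in the proof of Theorem~\ref{thm: II**} can be rerun verbatim with $\Z$ replaced by $\Bbb F$ (and $(\pm\Z)^{\otimes(n-m)}$ by $(\pm\Bbb F)^{\otimes(n-m)}$), yielding the field-coefficient isomorphism directly. Your approach is more economical, since it treats the corollary as a formal consequence of the theorem already proved rather than reworking the machinery. The paper's approach has the minor advantage of being self-contained at the level of each coefficient system and of making explicit that the spectral sequence argument is insensitive to the choice of coefficients, which is in the spirit of Vassiliev's framework; but for the purpose of establishing the corollary, your deduction via the universal coefficient theorem is both shorter and entirely adequate.
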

\begin{proof}
If we
apply the argument in the  proof of Theorem \ref{thm: II**}  above, with the homology groups $H_k(\ ,\Z)$ and $H_k(\ ,(\pm \Z)^{\otimes k})$ replaced by $H_k(\ ,\Bbb F)$
and $H_k(\ ,(\pm \Bbb F)^{\otimes k})$, the assertion follows.
\end{proof}

\subsection{Proof of Theorem \ref{thm: II}.}

\paragraph{Proof of $\I$.}
Fix a map $g$ as in the assumptions of the theorem. Let $\gamma_k:S^k\to \RP^k$ be the usual double covering map. Denote by
$i^{\p}:F(m,n;g)
\stackrel{\subset}{\rightarrow} 
\Map_{[d]_2}^*(\RP^m,\RP^n)$ the inclusion, and let 
$\gamma_m^{\#}:\Map^*_{\epsilon}(\RP^m,\RP^n)\to \Omega^m\RP^n$
be the map given by
$\gamma_m^{\#}(h)=h\circ \gamma_m$.
Then the following diagram is commutative.
\begin{equation}\label{gamma}
\begin{CD}
A_d(m,n;g) @>\Psi_d^{\p}>> \Alg_d^*(m,n;g) @>i_{d,\R}^{\p}>\subset> F(m,n;g)
\\
@V{i_d^{\p\p}}VV @. @V{i^{\p}}V{\cap}V
\\
\Omega^mS^n @>\Omega^m\gamma_n>\simeq> \Omega^m\RP^n @<\gamma_m^{\#}<< \Map^*_{[d]_2}(\RP^m,\RP^n)
\end{CD}
\end{equation}

Since the spaces $A_d(m,n;g)$ and $F(m,n;g)\simeq\Omega^mS^n$
are simply connected if $m+2\leq n$,
it suffices to show that $i_{d,\R}^{\p}\circ \Psi_d^{\p}$ is a
homology equivalence through dimension $D(d;m,n)$.
However, as $\Omega^m\gamma_n$ is a homotopy equivalence,
applying Theorem \ref{thm: II**} and the diagram (\ref{gamma}) we see that
it suffices to show that the map
$\gamma_m^{\#}\circ i^{\p}$ is a homology
equivalence through dimension $D(d;m,n)$.
\par
Let $\Bbb F =\Z/p$ $(p$: prime) or $\Bbb F =\Bbb Q$. Consider the induced homomorphism
$(\gamma_m^{\#}\circ i^{\p})_*=H_k(\gamma_m^{\#}\circ i^{\p},\Bbb F)
:H_k(F(m,n;g),\Bbb F)\to H_k(\Omega^m\RP^n,\Bbb F)$.
By Corollary \ref{cor: II*} and (\ref{gamma}),
$(\gamma_m^{\#}\circ i^{\p})_*$ is an epimorphism for any $1\leq k\leq D(d;m,n)$.
However, since there is a homotopy equivalence
$F(m,n;g)\simeq \Omega^m\RP^n$,
$\dim_{\Bbb F}H_k(F(m,n;g),\Bbb F)=\dim_{\Bbb F}H_k(\Omega^m\RP^n,\Bbb F)<\infty$
for any $k$.
Hence,
$H_k(\gamma_m^{\#}\circ i^{\p},\Bbb F )$ is an isomorphism for any $1\leq k\leq D(d;m,n)$.
Hence,  by the Universal Coefficient Theorem,
$\gamma_m^{\#}\circ i^{\p}$ is a homology equivalence through
dimension $D(d;m,n)$.

\paragraph{Proof of $\II$.}
Since
$d^t=0$ for any $t\geq 1$, from
the equality
$d^t\circ \tilde{\theta}^t_{r,s}
=\tilde{\theta}^t_{r+t,s+t-1}\circ \tilde{d}^t$
and some diagram chasing,
we obtain $\E^1_{r,s}(d)=\E^{\infty}_{r,s}(d)$ for all $r\leq \lfloor \frac{d+1}{2}\rfloor$.
Moreover, since
the extension problem for
$Gr (H_k(\Omega^mS^n,\Z))=\bigoplus_{r=1}^{\infty}E^{\infty}_{r,k+r}
=\bigoplus_{r=1}^{\infty}E^{1}_{r,k+r}$
is trivial, we deduce from(\ref{Dmin}.1) that the associated graded group
$Gr (H_k(A_d(m,n;g),\Z))=\bigoplus_{r=1}^{\infty}\E^{\infty}_{r,k+r}(d)$
is also trivial until the $\lfloor\frac{d+1}{2}\rfloor$-th term of the filtration.
Hence,  $H_k(A_d(m,n;g),\Z)$ contains the subgroup
$$
\bigoplus_{r=1}^{\lfloor \frac{d+1}{2}\rfloor}\E^1_{r,k+r}(d)
=
\bigoplus_{r=1}^{\lfloor \frac{d+1}{2}\rfloor}\E^{\infty}_{r,k+r}(d)
\cong
\bigoplus_{r=1}^{\lfloor \frac{d+1}{2}\rfloor}
H_{k-r(n-m)}(C_r(\R^m),(\pm \Z)^{\otimes (n-m)})
$$
as a direct summand, which proves the assertion (ii).
\qed

\subsection{Proof of Theorem \ref{thm: III}.}

This proof is almost identical to that in \cite[Section 6]{Mo2}.
We first prove, by induction on $m$, that the map $i_{d}: A_d(m,n)   \to \Map_{[d]_2}^*(\RP^m,\RP^n)$ is a homology equivalence through dimension $D(d;m,n)$. 
If $m=1$, the assertion follows from Theorem \ref{thm: A2}. 
Suppose that the assertion is true for $m-1$ for some $m\geq 2$, and 
consider the fiberwise map 
$R: A_d(m,n) \to A_d(m-1,n)$
given by setting $z_{m}=0$. 
Restriction to the hyperplane $z_{m}=0$ gives a map
$
r :\Map_{[d]_2}^*(\RP^m,\RP^n)\to \Map_{[d]_2}^*(\RP^{m-1},\RP^n)
$,
which is a Serre fibration with fiber $\Omega^mS^n$.  We have the following diagram
\begin{equation*}
\begin{CD}
A_d(m,n;\cdot) @>>> F(m,n;\cdot) @>>{=}>  F(m,n;\cdot)
\\
@VV{\cap}V @VV{\cap}V @VV{\cap}V
\\
A_d(m,n) @>i^{\p\p}>{\subset}> 
M_d^*(\RP^m,\RP^n) @>j^{\p\p}>{\subset}>
 \Map_{[d]_2}^*(\RP^m,\RP^n)
\\
@V{R}VV @V{r^{\p}}VV @V{r}VV
\\
A_d(m-1,n)  @>>{=}> 
A_d(m-1,n)   @>i_d>{\subset}> \Map_{[d]_2}^*(\RP^{m-1},\RP^n)
\end{CD}
\end{equation*}
in which the rightmost square is a pullback and $\M_d^*(\RP^m,\RP^n)$ is the space of continuous maps whose restriction to $\RP^{m-1}$ is algebraic of degree $d$. 
Since the base spaces of the fibrations   $r$ and $r^{\p}$ are simply connected, comparison of their spectral sequences shows by induction that the inclusion 
$j^{\p\p}:\M_d^*(\RP^m,\RP^n)\to \Map_{[d]_2}^*(\RP^m,\RP^n)$ is a homology equivalence through dimension $D(d;m-1,n)>D(d;m,n)$.


It suffices to show that $i^{\p\p}:A_d(m,n)  \to M_d^*(\RP^m,\RP^n)$ is a homology equivalence through dimension $D(d;m,n)$.

By Theorem \ref{thm: II}, $i^{\p\p}$ induces isomorphisms  through dimension $D(d;m,n)$ on all fibers.  Now we use an argument analogous to the one in \cite[Sect.6]{Mo2}. Our case is, in fact, somewhat easier since the spaces $A_d(m,n)$ and $A_d(m-1,n)$ are complements to closed subvarieties in affine spaces. As all the results from the theory of stratifying spaces from \cite{GM} are valid in the real algebraic case, the rest of the argument applies without any changes.\footnote{We note in passing that in \cite{Mo2} the word \lq stratum\rq\ is used for what is normally the closure of a \lq stratum\rq\ .} The homotopy statement for $m+2\leq n$ follows by Whitehead's theorem and simple-connectivity of $A_d(m,n)$ and $\Map_{[d]_2}(\RP^m,\RP^n)$.

It remains to prove that
$j_d:\tilde{A}_d(m,n)\to \Map_{[d]_2}(\RP^m,\RP^n)$
is a homology (resp. homotopy) equivalence through dimension $D(d;m,n)$. Consider the evaluation map 
$\tilde{ev}:\tilde{A}_d(m,n)\to\RP^n$ given by
$\tilde{ev}
([f_0:\cdots :f_n])=[f_0({\bf e}_m):\cdots :f_n({\bf e}_m)]$,
where ${\bf e}_k:=(1,0,\cdots ,0)\in\R^{k+1}$. This is a fibration (see eg. \cite[(2.1)]{CMN}) and $\tilde{ev}^{-1}({\bf e}_n)$ is the space of $\R^*$-equivalence classes of $(n+1)$-tuples of polynomials $[f_0:\cdots:f_n]$ such that the coefficient at $z_0^d$ equals $0$ in all $f_k$, $k\geq 1$. Every such class has a unique representative such that the coefficient at $z_0^d$ in $f_0$ is $1$, so the fiber is easily identifiable with $A_d(m,n)$.

Therefore we have a commutative diagram of fibration sequences
$$
\begin{CD}
A_d(m,n) @>>> \tilde{A}_d(m,n) @>\tilde{ev}>> \RP^n
\\
@V{i_d}VV
@V{j_d}VV \Vert @.
\\
\Map^*_{[d]_2}(\RP^m,\RP^n) @>>> 
\Map_{[d]_2}(\RP^m,\RP^n) @>ev>> \RP^n
\end{CD}
$$
which translates the statements about $i_d$ into statements about $j_d$.
\qed
\par\vspace{2mm}\par
\noindent{\bf Remark. }
The above argument depends on rather difficult results from the theory of stratifying spaces. In an early version of this article we gave more direct proof of Theorem \ref{thm: III} which, however, works only for spaces of maps of even degrees.
\par 
Indeed, for $d\equiv 0$ $\mo$ is an even integer,
consider the map
$j_d^{\p}:A_d(m,n)\to \Map^*(\RP^m,\R^{n+1}\setminus
\{{\bf 0}_{n+1}\})\simeq \Map^*(\RP^m,S^n)$ given by
$$
j^{\p}_d(f_0,\cdots ,f_n)([x_0:\cdots :x_m])=
\Big(\frac{f_0(x_0,\cdots ,x_m)}{\sum_{k=0}^mx_k^d},
\cdots ,\frac{f_n(x_0,\cdots ,x_m)}{\sum_{k=0}^mx_k^d}
\Big).
$$
Then we can prove:

\begin{thm}\label{thm: old}
If $2\leq m <n$ and 
$d\equiv 0$ $\mo$ are positive integers, 
 the map
$j_d^{\p}:A_d(m,n)\to \Map^*(\RP^m,S^n)$
is a homotopy equivalence through dimension $D(d;m,n)$ if
$m+2\leq n$ and a homology equivalence through dimension
$D(d;m,n)$ if $m+1=n$.
\end{thm}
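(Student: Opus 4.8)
\textbf{Proof proposal for Theorem \ref{thm: old}.}

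The plan is to reduce the statement about $j_d^{\p}$ to the already-established Theorem \ref{thm: II**} (or rather its unbased-hyperplane version) by exploiting the restriction fibration and an inductive comparison, exactly in the spirit of the proof of Theorem \ref{thm: III} given above, but avoiding the stratification machinery. First I would observe that since $d$ is even, the polynomial $g_m=\sum_{k=0}^m z_k^d$ (or $\sum z_k^2$ raised to the appropriate power) is everywhere positive on $S^m$, so $j_d^{\p}$ is well-defined and lands in $\Map^*_0(\RP^m,S^n)=\Map^*(\RP^m,\R^{n+1}\setminus\{{\bf 0}_{n+1}\})$; the target is $\Map^*(\RP^m,S^n)\simeq \Map^*_{[d]_2}(\RP^m,\RP^n)$ after composing with the covering projection $S^n\to\RP^n$, which is a homotopy equivalence on these basepoint-preserving components precisely because $d$ is even and $\pi_1(\RP^n)$ acts trivially here. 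Then I would set up the commutative diagram of restriction fibrations obtained by setting $z_m=0$:
\begin{equation*}
\begin{CD}
A_d(m,n;g) @>>> A_d(m,n) @>R>> A_d(m-1,n)
\\
@V{i_d^{\p\p}}VV @V{j_d^{\p}}VV @V{j_{d}^{\p}}VV
\\
\Omega^mS^n @>>> \Map^*(\RP^m,S^n) @>r>> \Map^*(\RP^{m-1},S^n)
\end{CD}
\end{equation*}
where the left-hand fiber map is (up to the homotopy equivalence $\Omega^m\gamma_n$) the map $i_d^{\p\p}$ studied in Theorem \ref{thm: II**}, and the fiber of $r$ is $\Omega^mS^n$.

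The key steps, in order, are: (1) verify the diagram commutes and both rows are genuine (Serre) fibration sequences — for the bottom row this is standard, for the top row one identifies $A_d(m-1,n)$ with the base via $R$ and the fiber with $A_d(m,n;g)$ for a varying $g$, observing that $A_d(m,n)\to A_d(m-1,n)$ is surjective and has the path-lifting property since $A_d(m,n)$ is the complement of a closed algebraic subvariety in affine space; (2) induct on $m$, the base case $m=1$ being Theorem \ref{thm: A2}; (3) assuming the base-space map $j_d^{\p}:A_d(m-1,n)\to\Map^*(\RP^{m-1},S^n)$ is a homology equivalence through dimension $D(d;m-1,n)>D(d;m,n)$, and knowing from Theorem \ref{thm: II**} that the fiber map is a homology (indeed homotopy, when $m+2\le n$) equivalence through dimension $D(d;m,n)$, conclude by comparing the Leray--Serre spectral sequences of the two fibrations that $j_d^{\p}$ itself is a homology equivalence through dimension $D(d;m,n)$; (4) upgrade to a homotopy equivalence when $m+2\le n$ via the Whitehead theorem, using simple-connectivity of $A_d(m,n)$ (codimension $n-m+1\ge 3$) and of $\Map^*(\RP^m,S^n)$ (from the Fact in Section \ref{section:notation1}).

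The main obstacle I anticipate is Step (1) — specifically, establishing that $R:A_d(m,n)\to A_d(m-1,n)$ is a fibration (or at least a quasifibration) with the fiber correctly identified, uniformly in the basepoint on the base. The subtlety is that the fiber over a point $(g_0,\dots,g_n)\in A_d(m-1,n)$ is $A_d(m,n;g)$ for the map $g$ represented by that tuple, and one must check that this family is locally trivial; this is where the earlier proof of Theorem \ref{thm: III} invoked the theory of stratified spaces. Here, since $A_d(m,n)$ and $A_d(m-1,n)$ are merely complements of closed real algebraic subvarieties in affine spaces and $R$ is a linear projection restricted to such a complement, one can instead argue directly: the total space fibers linearly, the discriminant is a closed subvariety mapping properly, and a Thom--Mather type local triviality (or an explicit partition-of-unity lifting argument on the contractible fibers of polynomials with positive leading behaviour) gives the fibration property. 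Once this is in hand, Steps (2)--(4) are routine spectral sequence comparison and Whitehead-theorem bookkeeping, entirely parallel to the proof of Theorem \ref{thm: III}. It is worth noting explicitly that the evenness hypothesis $d\equiv 0 \pmod 2$ is used in exactly two places: the well-definedness of the denominator $\sum z_k^d$ on $\RP^m$, and the fact that $\Map^*(\RP^m,S^n)\to\Map^*_{[d]_2}(\RP^m,\RP^n)$ is a homotopy equivalence onto the relevant component.
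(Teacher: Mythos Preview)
Your approach is genuinely different from the paper's, and the difference matters because you end up re-importing exactly the difficulty that Theorem~\ref{thm: old} is designed to sidestep.

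The paper does \emph{not} prove Theorem~\ref{thm: old} via the restriction fibration and induction on~$m$. Instead it argues directly, in parallel with the proof of Theorem~\ref{thm: II} (and Theorem~\ref{thm: II**}): one takes the discriminant $\Sigma_d\subset A_d^{\mathrm{amb}}$ of $A_d(m,n)$ inside its ambient affine space, builds a (degenerate) simplicial resolution of $\Sigma_d$, and compares the resulting spectral sequence term-by-term with Vassiliev's spectral sequence for $\Map^*(\RP^m,S^n)$ (Lemma~\ref{lemma: Va} applied with $X=\RP^m$). The only new ingredient is that the Veronese embedding $\psi_d^*:\R^m\to\R^{M_d}$ used in Section~4.2 is replaced by a map
\[
\psi_d:\RP^m\longrightarrow \R^{M_d},\qquad
\psi_d([x_0:\cdots:x_m])=\Big(\frac{\varphi_I(x_0,\dots,x_m)}{\sum_{k=0}^m x_k^{d}}\Big)_{\varphi_I\in H_d},
\]
which is well defined on $\RP^m$ precisely because $d$ is even. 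This is where the evenness hypothesis enters for the paper, and it is the whole point of the remark: the direct resolution argument works globally on $\RP^m$ without any fibration or stratification input.

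Your route, by contrast, is essentially the proof of Theorem~\ref{thm: III} transported to the target $\Map^*(\RP^m,S^n)$. You correctly isolate the obstacle: establishing that $R:A_d(m,n)\to A_d(m-1,n)$ is a (quasi)fibration with fibre $A_d(m,n;g)$. But your proposed fixes---Thom--Mather local triviality, or a partition-of-unity lifting---are either the stratified-space machinery under another name or not obviously sufficient (the fibres $A_d(m,n;g)$ are not contractible, and ``the discriminant maps properly'' does not by itself yield local triviality of the complement). Since the raison d'\^etre of Theorem~\ref{thm: old} in the paper is to furnish an argument that \emph{avoids} precisely this step, your proposal, even if it can be completed, does not achieve what the theorem is there for. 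If you want to follow the paper, drop the induction and instead run the simplicial-resolution spectral sequence directly for $A_d(m,n)$ with the $\RP^m$-Veronese map above.
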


  
The proof is similar to the proof of Theorem \ref{thm: II}; the key difference being that the Veronese map 
$\psi_d^* :\R^m\to \R^{M_d}$ is replaced by a map $\psi_d :\RP^m\to \R^{M_d}$
given by
$$
\psi_d ([x_0:\cdots :x_m])=
\Big[\frac{\varphi_I(x_0,\cdots ,x_m)}{\sum_{k=0}^{m}x_k^{d}}
\Big]_{\varphi_I\in {\rm H}_d}
$$
The maps $j_d^{\p}$ and $\psi_d$ are well defined only for even $d$ which is why the argument only works in this case.
Because  the map
$\gamma_{n\#}:\Map^*(\RP^m,S^n)
\stackrel{\simeq}{\rightarrow}
\Map_0^*(\RP^m,\RP^n)$
is a homotopy equivalence,
 we can  prove Theorem 
\ref{thm: III} by using  Theorem \ref{thm: old}
when $d\equiv 0$ $\mo$.
\section{Proof of Theorem \ref{thm: A4}.}\label{section 5}

Let $\tilde{P}\subset ({\cal H}_{d,m}^{n+1}\setminus
\{{\bf 0}\})/\R^*$ 
denote the subspace consisting of those $(n+1)$-tuples 
$[f_0:\cdots :f_n]$ for which the coefficient at $z_0^d$ equals $0$ in $f_k$ if $k\not= 0$. 
Note that $\tilde{P}$ is homeomorphic with $\RP^N$ for some $N$ and that $A_d(m,n)$ is homeomorphic to 
the subspace $\tilde{P}_d$ of $\tilde{P}$ consisting of tuples with no common non-trivial real zero.
From now on, we identify $A_d(m,n)=\tilde{P}_d$.

For what follows, we need to fix a metric on $\CP^m$. Note that there is a canonical embedding $\RP^m\subset \CP^m$. For each $\epsilon >0$, let $V_{\epsilon}\subset \tilde{P}$
denote the subspace consisting of all $[f_0:\cdots :f_n]\in \tilde{P}$
which satisfy the following condition:
\begin{itemize}
\item[(*)] there exists an open ball $B$ of radius $\epsilon$ in $\CP^m$, centered at a point of $\RP^m\subset \CP^m$, such that each $f_k$ has a zero in $B$.
\end{itemize}
Note that each $V_\epsilon$ is an open subset of $\tilde{P}$ and that $V_{\epsilon_1}\subset V_{\epsilon_2}$ if $0<\epsilon_1<\epsilon_2$. Moreover
$$
\bigcap_{\epsilon>0}V_{\epsilon}=\tilde{P}\setminus A_d(m,n).
$$
The map $\Psi_d :A_d(m,n)\to \Alg_d^*(\RP^m,\RP^n)$ restricts to maps:
$$
\Psi_d^\epsilon=\Psi_d\vert_{\tilde{P}\setminus V_{\epsilon}}:
\tilde{P}\setminus V_{\epsilon}\to
\Psi_d (\tilde{P}\setminus V_{\epsilon})\subset \Alg_d^*(\RP^m,\RP^n).
$$
Since $\tilde{P}\setminus V_{\epsilon}$ is a closed, and hence compact subset of the projective space $\tilde{P}$, 
the image of $\Psi_d^\epsilon$ is also compact. 
Therefore the restricted map $\Psi_d^\epsilon$ is a closed map of Hausdorff spaces.
\par
Note that
any fiber $F_{\epsilon}$ of $\Psi_d^\epsilon$ is homeomorphic to the space of positive real homogenous polynomials of fixed even degree with
 leading coefficient  equal to $1$ 
and no zeros 
in the $\epsilon$-neighborhood of $\RP^m$ in $\CP^m$.
Clearly 
$F_{\epsilon^{\p}}\subset F_{\epsilon}$ if
$0<\epsilon <\epsilon^{\p}$.
We also have the following:
\begin{lemma}\label{lemma: F}
For $0<\epsilon <\epsilon^{\p}$, the inclusion
$F_{\epsilon^{\p}}\subset F_{\epsilon}$ is a deformation retract.
\end{lemma}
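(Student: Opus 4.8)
The plan is to exhibit an explicit strong deformation retraction of $F_\epsilon$ onto $F_{\epsilon'}$ by sliding each polynomial along the line segment joining it to one fixed extremal polynomial. Write $2e$ for the common (even) degree and set $h=g_m^e=\big(\textstyle\sum_{k=0}^m z_k^2\big)^e$. Its zero locus in $\CP^m$ is the imaginary quadric $Q=\{\sum_k z_k^2=0\}$, and a short Fubini--Study computation shows that $Q$ is precisely the set of points of $\CP^m$ at the maximal distance $\delta_0$ from $\RP^m$; in particular every form of positive degree has a zero within distance $\delta_0$ of $\RP^m$, so $F_{\epsilon'}$ is empty unless $\epsilon'\le\delta_0$, and we may assume $\epsilon'\le\delta_0$, which puts $h$ in $F_{\epsilon'}\subseteq F_\epsilon$. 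For a positive-definite degree-$2e$ form $q$ (normalised so the leading coefficient is $1$) let $\delta(q)$ denote the distance from $\RP^m$ to the zero set $V(q)\subset\CP^m$, so that $F_\epsilon=\{q:\delta(q)\ge\epsilon\}$; the function $\delta$ is continuous, because $q\mapsto V(q)$ is continuous for the Hausdorff metric among forms of a fixed degree (the incidence variety is closed and proper over the space of forms).

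Next I would set up the homotopy. For $q\in F_\epsilon$ and $t\in[0,1]$ put $q_t=(1-t)q+th$, again a positive-definite normalised form of degree $2e$. The crucial claim is that for $q\neq h$ the function $t\mapsto\delta(q_t)$ is continuous and \emph{strictly} increasing, running from $\delta(q)$ to $\delta(h)=\delta_0$ (while $\delta(q_t)\equiv\delta_0$ when $q=h$). Granting this, set $\tau(q)=\inf\{t\in[0,1]:\delta(q_t)\ge\epsilon'\}$; then $\tau$ vanishes exactly on $F_{\epsilon'}$, and it is continuous when $\epsilon'<\delta_0$ (immediate from the claim), while for $\epsilon'=\delta_0$ one has $\tau\equiv1$ away from $h$, which still makes the formula below continuous. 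Then
\[
H:F_\epsilon\times[0,1]\longrightarrow F_\epsilon,\qquad H(q,s)=q_{\,s\tau(q)},
\]
is the desired strong deformation retraction: $H(q,0)=q$; $H(q,1)=q_{\tau(q)}\in F_{\epsilon'}$; $H(q,s)=q$ for all $s$ when $q\in F_{\epsilon'}$; and $H(q,s)\in F_\epsilon$ since $\delta\big(q_{s\tau(q)}\big)\ge\delta(q)\ge\epsilon$ by the claim.

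Proving the claim is the main obstacle. Because $q_t$ depends affinely on $t$, the monotonicity amounts to showing that adjoining a nonnegative multiple of $h$ to a positive-definite form never brings its zero locus closer to $\RP^m$, i.e. $\delta(p+sh)\ge\delta(p)$ for $p$ positive-definite and $s\ge0$, with strict inequality when $s>0$ and $p\neq h$; applied to $p=q_{t_0}$ this gives $\delta(q_{t_1})\ge\delta(q_{t_0})$ for $t_1\ge t_0$ (the endpoint $t_1=1$ then follows by continuity of $\delta$). Since any $z$ with $0<\mathrm{dist}(z,\RP^m)<\delta(p)\le\delta_0$ is a zero of neither $p$ nor $h$, this inequality is in turn equivalent to the statement that the ratio $p/h$ — a holomorphic function on $\CP^m\setminus Q$, real and \emph{positive} on $\RP^m$ — takes no value in $(-\infty,0)$ on the open $\delta(p)$-neighbourhood of $\RP^m$ (a zero of $p+sh$, $s>0$, would force $p/h=-s$ there). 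I would try to prove this, together with strictness, by analysing the real hypersurface $\{z:p(z)/h(z)\in\R_{\le0}\}$ and by invoking the open mapping theorem for the non-constant holomorphic function $p/h$: a negative real value attained at a boundary point $z_0$ (with $\mathrm{dist}(z_0,\RP^m)=\delta(p)$, necessarily $z_0\notin Q$, where the distance function to $\RP^m$ is smooth with nonvanishing gradient so that the distance sphere is smooth and two-sided near $z_0$) ought to propagate to negative real values at points strictly inside the neighbourhood, contradicting what has been shown. Making this propagation argument watertight — reconciling the local holomorphic picture near $z_0$ with the global constraint furnished by $V(p)$, and extracting strict monotonicity — is the technical heart; everything built around it is formal. (If one only wanted $F_{\epsilon'}\hookrightarrow F_\epsilon$ to be a homotopy equivalence realised by a retraction up to homotopy, one could instead note that it is a cofibration of well-behaved spaces and argue separately that it is a homotopy equivalence, but establishing the latter appears to demand an input of comparable difficulty.)
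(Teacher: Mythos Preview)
Your approach differs fundamentally from the paper's. The paper does not interpolate linearly toward $h=g_m^e$; instead it rescales the variables, setting
\[
\phi_t(f)(z_0,\dots,z_m)=f\bigl(z_0,\,z_1/\lambda_t,\,z_2/\lambda_t^2,\,\dots,\,z_m/\lambda_t^m\bigr),\qquad \lambda_t=1+\tan(\pi t/2),
\]
and follows each individual root $[x_0:\cdots:x_m]$ of $f$ along the explicit curve $[x_0:\lambda_t x_1:\cdots:\lambda_t^m x_m]$, arguing that this pushes it out of any prescribed tube around $\RP^m$.

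Your proposal contains a genuine gap, which you yourself flag: the strict monotonicity of $t\mapsto\delta(q_t)$ along the segment $q_t=(1-t)q+th$ is never established, and the sketch you offer does not close it. The reduction to showing that $p/h$ avoids $(-\infty,0)$ on the open $\delta(p)$--neighbourhood $N$ of $\RP^m$ is correct, but the proposed propagation step is circular as written: you invoke a contradiction with ``what has been shown'', yet the only thing shown is positivity of $p/h$ on $\RP^m$ itself, and off $\RP^m$ the function is complex-valued and may wind around~$0$ without vanishing. The open-mapping theorem tells you only that near a point where $p/h$ takes a negative real value, $p/h$ fills out a neighbourhood of that value; it gives no control over \emph{where} in $\CP^m$ those values are assumed relative to the distance function to $\RP^m$, so no contradiction follows. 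For $m\ge 2$ there is the additional wrinkle that $p/h$ is a rational map $\CP^m\dashrightarrow\CP^1$ with indeterminacy along $\{p=0\}\cap Q$, not a globally defined holomorphic function.

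Even the weak (non-strict) version --- that $F_\epsilon$ is star-shaped with centre $h$ --- is far from obvious: it asserts that convexly combining a positive-definite form with $h$ can never produce a zero closer to $\RP^m$ than the original form had. A test case such as $p=(z_0^2-2z_0z_1+2z_1^2)^2$ for $m=1$ is instructive: when the double zero at $[1+i:1]$ splits under $p\mapsto p+sh$, the two resulting simple zeros move, to leading order $\sqrt{s}$, \emph{tangentially} to the level sets of the distance-to-$\RP^1$ function. The monotonicity (which does in fact survive in that example once one computes to order~$s$) is therefore a second-order effect, not something a soft open-mapping argument will detect. Pursuing your route would require a substantially sharper analytic input than you have indicated.
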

\begin{proof}[Proof of Lemma \ref{lemma: F}]
For $f=f(z_0,\cdots ,z_m)\in F_{\epsilon}$ and $0\leq t<1$,
let $\phi_t (f)$ denote the positive
homogenous polynomial given by
$$
\phi_t(f)=f\Big(z_0,\frac{z_1}{(1+\tan \frac{\pi t}{2})},
\frac{z_2}{(1+\tan \frac{\pi t}{2})^2},
\cdots ,
\frac{z_m}{(1+\tan \frac{\pi t}{2})^m}
\Big).
$$
Observe that
if ${\bf x}=[x_0:\cdots :x_m]\in \CP^m$ is a root of $f$,
 $\phi_t (f)$ has the corresponding root
${\bf x}_t=[x_0:x_1(1+\tan \frac{\pi t}{2}):
x_2(1+\tan \frac{\pi t}{2})^2:\cdots :x_m(1+\tan \frac{\pi t}{2})^m]$.
Thus the distance  $D(t)$ between ${\bf x}_t$ and $\RP^m$ in $\CP^m$
is an increasing continuous function and it goes to $\infty$
if $t\to 1-0$.
Hence $\phi_t(f)$ has no zeros in the $\epsilon^{\p}$-neighborhood
of $\RP^m$ in $\CP^m$ for some $0<t<1$ and the assertion follows.
\end{proof}
By applying Lemma \ref{lemma: F} we  see that
the inclusion 
$\dis F_{\epsilon}\stackrel{\simeq}{\rightarrow} 
F:=\bigcup_{\delta >0}F_{\delta}$ is a
homotopy equivalence.
Moreover, since 
$F$
is the space consisting of all 
positive homogenous polynomials of the fixed
even degree with the leading coefficient equal to $1$,
it is convex, hence 
contractible, and hence so is
$F_{\epsilon}$.
Then by  the Vietoris-Begle Theorem \cite{Spanier},
we see that the map $\Psi_d^\epsilon$ induces an isomorphism
$$
(\Psi_d^\epsilon)_* :H_k(\tilde{P}\setminus V_{\epsilon},R)
\stackrel{\cong}{\longrightarrow}
H_k(\Psi_d (\tilde{P}\setminus V_{\epsilon}),R)
$$
for any $k$ and any commutative ring $R$.
Because
any singular chain of $A_d(m,n)$ is supported in
$\tilde{P}\setminus V_{\epsilon}$ for some $\epsilon>0$,
we get the desired assertion for $\Phi_d$ 
by letting $\epsilon \to +0$.

The corresponding result for the map $\Gamma_d:\tilde{A}_d(m,n)\to \Alg_d(\RP^m,\RP^n)$ can be proved either by an analogous argument or by the comparison of two fibration sequences:
$$
\begin{CD}
A_d(m,n) @>>> \tilde{A}_d(m,n) @>\tilde{ev}>> \RP^n
\\
@V{\Psi_d}VV @V{\Gamma_d}VV \Vert @.
\\
\Alg_d^*(\RP^m,\RP^n) @>>> \Alg_d(\RP^m,\RP^n) 
@>ev>> \RP^n
\end{CD}
$$
This completes the proof of Theorem \ref{thm: A4}.
\qed

\section{Appendix. The case $m=1$.}\label{section 6}

Theorem \ref{thm: IIIa} is unsatisfactory in one respect: we believe its statement ought to be analogous to that of Theorem \ref{thm: III} but have only been able to prove a weaker version. The stronger version would follow if we could replace the word \lq\lq homology\rq\rq\  by  \lq\lq homotopy\rq\rq\  in the statement of Theorem  \ref{thm: A4}. Unfortunately we have not been able to do this in general as we have not been able to prove that the spaces $\Alg_d^*(\RP^m,\RP^n)$ are simply connected for $m+2\le n$. However, it turns out that in the case $m=1$ we can use a more direct approach to show that the map $\Psi_d : A_{d}(1,n)\to \Alg_d^*(\RP^1,\RP^n)$ is a homotopy equivalence. 

We will exploit the convenient fact that spaces of tuples of polynomials in one variable can be identified with certain configuration spaces of points or particles in the complex plane. More exactly, we identify the space $A_d(1,n)$ with the space of $d$ particles of each of $n+1$ different colors, in the case $n=2$, say, red, blue and yellow, located in $\Bbb C = \Bbb R^2$ symmetrically with respect to the real axis and such that no three particles of different color lie at the same point on the real axis. 

\begin{center}
\includegraphics[width=3.7cm]{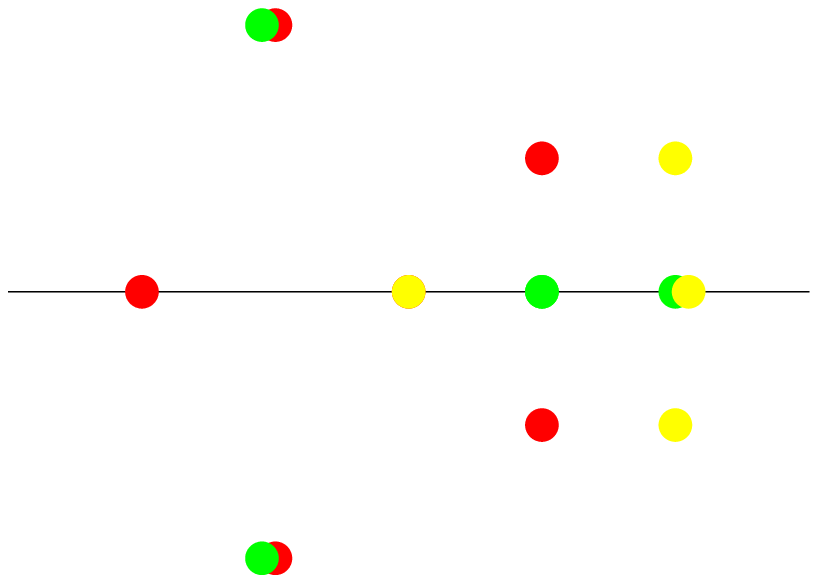} 
\end{center}
Note that off the real axis the particles are completely unrestricted. The space of algebraic maps $A_d(1,n)$ can also be thought of as a configuration space of  $k$ particles of each of $n+1$ different colors as above, where $k\leq d$, but with the additional property that when $n+1$ different particles meet (off the real line) they disappear. 

Finally, we need one more configuration space, introduced in \cite{Mo1}.  Let $T(d,n)$ denote the space of  no more than $k_i\le d$ particles of color $i$, where $k_i=d$ mod $2$, on the real axis , with the property that any even number of particles of the same color at the same point on the real axis vanish, and  of course, as before no $n+1$ particles of different colors lie at the same point. In other words,  $T(d,n)$ is a configuration space modulo 2. There is a natural map $\Phi :A_d(1,n)\to T(d,n)$ which factors through $\Psi_d:A_d(1,n) \to \Alg_d(\RP^1,\RP^n)$, 
$\Phi =Q_d\circ \Psi_d:A_d(1,n)
\stackrel{\Psi_d}{\longrightarrow} \Alg_d^*(\RP^1,\RP^n) \stackrel{Q_d}{\longrightarrow} T(d,n).$

 \begin{prop}[\cite {Mo1}, Proposition 2.1]
\label{prop: Mo}
The maps $\Psi_d$ and $Q_d$ above are homotopy equivalences. 
\end{prop}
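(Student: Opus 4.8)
The plan is to exploit the configuration-space models described above and to carry out the argument in two stages, treating $\Psi_d$ and $Q_d$ separately. First I would make the identifications explicit: $A_d(1,n)$ is the space of tuples $(f_0,\dots,f_n)$ of degree-$d$ real polynomials (suitably normalized so that the leading coefficient pattern is fixed) with no common real root, which I encode as $n+1$ unordered configurations of $d$ points in $\C$, each stable under complex conjugation, with the constraint that no real point is simultaneously a root of all $n+1$ polynomials. The space $\Alg_d^*(\RP^1,\RP^n)$ is obtained by passing to the algebraic map, which forgets any common \emph{complex} root of all $n+1$ polynomials; geometrically this collapses the locus where one conjugate pair (or a real point, but that is excluded) is shared by all colours — so it is a configuration space with annihilation of $(n+1)$-fold coincidences off the real line. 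Finally $T(d,n)$ retains only the real points of each colour, taken mod $2$, with no $(n+1)$-fold real coincidence; the composite $\Phi = Q_d\circ\Psi_d$ sends a tuple of polynomials to the mod-$2$ divisors of its real roots.

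For the map $\Psi_d : A_d(1,n)\to \Alg_d^*(\RP^1,\RP^n)$, the key point is that its fibres are the spaces of positive-definite degree-(even) polynomials with fixed leading coefficient and no \emph{real} zeros — exactly the kind of convex, hence contractible, fibre that appeared in the proof of Theorem~\ref{thm: A4}. What is new here (and what Theorem~\ref{thm: A4} could \emph{not} give in general) is that in the one-variable case $\Psi_d$ is not merely a homology equivalence but genuinely a quasifibration, indeed a deformation-retract-type collapse: I would follow Mostovoy \cite{Mo1} and construct an explicit deformation, analogous to the $\phi_t$ of Lemma~\ref{lemma: F}, that pushes any shared complex-conjugate root pair off to infinity, thereby contracting each fibre compatibly over the base. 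This gives that $\Psi_d$ is a homotopy equivalence directly, without needing simple-connectivity hypotheses. The corresponding statement for $Q_d$ is proved the same way: its fibres over a real mod-$2$ configuration are spaces of tuples of polynomials whose real-root divisors are prescribed mod $2$ and which additionally may have arbitrary complex roots and arbitrary even-multiplicity real roots — again a contractible (convex-up-to-deformation) space, and again one builds a fibrewise deformation retraction.

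The main obstacle I anticipate is verifying that these fibrewise contractions are \emph{continuous in the base point and compatible across the stratification} — i.e. that $\Psi_d$ and $Q_d$ are honest fibre bundles (or at least quasifibrations) rather than just maps with contractible point-preimages, which, as the paper itself stresses right before Theorem~\ref{thm: A4}, does not by itself suffice. In the one-variable setting this is manageable because the relevant loci (shared conjugate pairs, even real multiplicities) are cut out by resultant/discriminant conditions and the annihilation map is well understood from the theory of labelled configuration spaces; the explicit rescaling deformations can be chosen to depend algebraically on the configuration. Concretely I would invoke the machinery of \cite{Mo1} (the scanning/group-completion picture for these coloured configuration spaces with annihilation) to upgrade "contractible fibres plus local triviality over each stratum" to a global homotopy equivalence, and cite Proposition~2.1 of \cite{Mo1} for the precise statement, filling in only the points where the real-projective normalization here differs from the affine set-up there. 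Since the statement we must prove \emph{is} Mostovoy's Proposition~2.1 transported to our notation, the write-up would essentially be a careful translation plus a check that our $A_d(1,n)$, $\Alg_d^*(\RP^1,\RP^n)$, $T(d,n)$ coincide with his.
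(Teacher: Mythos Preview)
Your proposal has a structural circularity and misses the paper's actual point. You conclude by saying the write-up ``would essentially be a careful translation'' of Mostovoy's Proposition~2.1 in \cite{Mo1}, citing that result for the precise statement. But the paper explicitly remarks, immediately after stating the proposition, that the proof in \cite{Mo1} ``does not seem convincing to us,'' and the entire purpose of this appendix is to supply a \emph{new} argument. So a plan that amounts to transporting Mostovoy's original proof is exactly what the paper is declining to do.

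More specifically, the paper criticizes the strategy of attacking $\Psi_d$ directly: ``the argument claiming to prove that $\Psi_d$ is a homotopy equivalence makes no explicit use of any properties of the map $\Psi_d$, such as being a quasifibration etc.'' Your plan follows that same pattern --- contractible fibres plus a hoped-for fibrewise deformation --- and you yourself flag the obstacle (continuity across strata) without giving a mechanism to overcome it beyond invoking \cite{Mo1}. The paper's route is different: it proves (Lemma~\ref{lemma: K}) that $Q_d$ and the composite $Q_d\circ\Psi_d$ are quasifibrations with contractible fibres, and deduces that $\Psi_d$ is a homotopy equivalence from those two facts. The technical core you are missing is the Dold--Thom verification: one filters $T(d,n)$ first by the number of ``singular sub-configurations'' (collections of $n$ particles of distinct colours at a point) within each $T(d,n;p_1,\dots,p_{n+1})$, over which the maps are locally trivial bundles, and then by particle counts colour by colour; the required neighbourhood deformations are built by an attractive-force collapse of nearly-singular clusters. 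None of this appears in your outline.
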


A proof of this proposition is given in \cite{Mo1} but as it does not seem convincing  to us.
So we will give another one, which we believe to be correct.  More precisely, we shall prove the following:

 \begin{lemma}
\label{lemma: K}
The maps $Q_d$ and $Q_d \circ \Psi_d$  are quasi-fibrations with contractible fibers. 
\end{lemma}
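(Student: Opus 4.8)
The plan is to analyze the two maps $Q_d:\Alg_d^*(\RP^1,\RP^n)\to T(d,n)$ and $\Phi=Q_d\circ\Psi_d:A_d(1,n)\to T(d,n)$ separately, using the configuration-space interpretation described above, and to show in each case that the map is a quasifibration with contractible fibers; combined with the fact that $T(d,n)$ is path-connected (which follows from the particle description, since any configuration can be swept off to a standard one), the long exact sequences of the quasifibrations then force $Q_d$ and $\Phi$ to be weak homotopy equivalences, and hence $\Psi_d$ as well by the two-out-of-three property, proving Proposition \ref{prop: Mo}. The identification of fibers is the geometric heart: a point of $T(d,n)$ records the real-axis particle data modulo $2$; a preimage under $Q_d$ consists of all ways of choosing, for each color $i$, a nonnegative even number of extra real particles to add at already-occupied or new points of the real line (subject only to the ``no $n+1$ colors coincide'' constraint), together with an arbitrary off-axis configuration in complex-conjugate pairs that carries no real-axis incidence obligation. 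One checks this fiber is convex, or at least star-shaped, once one fixes a scaling/pushing-off deformation of the sort already used in Lemma \ref{lemma: F}: off-axis particle pairs can be pushed to infinity or parametrized freely, and the even bunches of real particles can be collapsed continuously to the configuration with no extra particles. So the fiber deformation retracts to a point. The preimage under $\Phi$ is similar but one now allows genuine cancellation when $n+1$ differently colored off-axis particles collide, so the fiber is a space of $\le k$ particles of each color off the real line (in conjugate pairs) with $n+1$-fold annihilation plus the even bunches on the axis; the same pushing-to-infinity homotopy contracts it.

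The main technical work is verifying the quasifibration property, and this is where I expect the real obstacle to lie. The standard tool is Dold–Thom's criterion: stratify the base $T(d,n)$ by an increasing filtration (for instance by the number of distinct occupied points on the real axis, or by a ``distinguished open set'' $U$ obtained by pushing away the particles near a basepoint of $\RP^1$) and show that over each open stratum the map is a fibration, while the inclusions of successive strata admit the required fiberwise deformations. Concretely I would follow Segal's and Mostovoy's template: build a filtration $T(d,n)=\bigcup F_j$ where $F_j\setminus F_{j-1}$ is an open manifold over which $Q_d$ (resp. $\Phi$) is locally trivial, and use an explicit ``scanning/pushing'' homotopy near the distinguished region to push a neighborhood of $F_{j-1}$ into $F_{j-1}$ compatibly with the map. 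The scanning homotopy is exactly the kind of construction used implicitly in \cite{Mo1}; the point of giving a new proof is to make the deformation-retraction of fibers and the compatibility with the stratification fully explicit, avoiding the gap the authors perceive in \cite{Mo1}. An alternative, if the open-cover bookkeeping becomes unwieldy, is to use the criterion of Dold–Thom in the form: a map is a quasifibration if the base is the union of an increasing sequence of open sets over each of which the map restricts to a quasifibration, and here one can take the open sets $\Psi_d(\tilde P\setminus V_\epsilon)$-type exhaustions adapted to $T(d,n)$, reducing to a compact situation where local triviality over a stratification is easier to establish.

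Once Lemma \ref{lemma: K} is in hand, Proposition \ref{prop: Mo} is immediate: both $Q_d$ and $Q_d\circ\Psi_d$ are quasifibrations with contractible fibers over the path-connected, simply-connected (for $n\ge 2$) base $T(d,n)$, hence weak homotopy equivalences, and $\Psi_d$ is their ``difference.'' Since all spaces involved are of the homotopy type of CW complexes, weak equivalence upgrades to genuine homotopy equivalence. This then feeds into the Appendix's improvement of Theorem \ref{thm: IIIa} from a homology to a homotopy statement in the case $m=1$, because the homotopy equivalence $\Psi_d:A_d(1,n)\xrightarrow{\simeq}\Alg_d^*(\RP^1,\RP^n)$ lets us transport Theorem \ref{thm: A2} (on $i_d=i_{d,\R}\circ\Psi_d$) directly to the inclusion $i_{d,\R}$.
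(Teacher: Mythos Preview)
Your overall strategy---contract the fibers explicitly and then verify the Dold--Thom criterion on a filtration of $T(d,n)$---is exactly the paper's. But there are two concrete problems in your execution.

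First, you have the two fibers interchanged. The map $\Phi=Q_d\circ\Psi_d$ has source $A_d(1,n)$, where every polynomial has degree \emph{exactly} $d$; its fiber therefore consists of configurations with \emph{exactly} $d$ particles of each color (symmetric about the real axis, with prescribed real-axis data mod $2$) and \emph{no} annihilation. It is $Q_d$, with source $\Alg_d^*(\RP^1,\RP^n)$, whose fibers carry the $(n{+}1)$-fold off-axis annihilation and hence $\le d$ particles per color. More importantly, your proposed contraction ``push the off-axis conjugate pairs to infinity'' cannot work for the fiber of $\Phi$: pushing roots off to infinity drops the degree, and you would leave $A_d(1,n)$. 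The paper's contraction is different and does work for both maps simultaneously: send every particle in the open upper half-plane linearly to a single fixed point there, every particle in the lower half-plane to its conjugate, and for a $2k$- or $(2k{+}1)$-fold real particle peel off $k$ copies to each of those two fixed points. This keeps the total count at $d$ in $A_d(1,n)$ and is equally well-defined in $\Alg_d^*$. Your appeal to the scaling deformation of Lemma~\ref{lemma: F} is not relevant here.

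Second, your quasifibration sketch is too vague at precisely the point you yourself flag. The filtration the paper uses is not by the number of distinct occupied points; it is first by the vector of particle counts $(p_1,\dots,p_{n+1})$, and then within each $T(d,n;p_1,\dots,p_{n+1})$ by the number of \emph{singular sub-configurations} (points where $n$ distinct colors already sit, i.e.\ obstacles for the remaining color). Over the strata with a fixed number of singular sub-configurations the maps are locally trivial bundles, and the Dold--Thom neighborhoods and deformations are produced by a short-range attractive ``force'' pulling almost-singular $n$-tuples together into genuinely singular ones. One then inducts, color by color, collapsing close pairs of same-colored particles while avoiding singular points. A filtration by ``number of distinct occupied points'' would have strata whose closures add points rather than remove them, which runs the wrong way for this kind of collision deformation; you would need to say carefully what the neighborhood retractions are, and you have not.
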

From this it follows at once that $\Psi_d$ is a homotopy equivalence. 
Note that the proof of Proposition 2.1 in \cite{Mo1} is based on arguments claiming to prove that 
$Q_d\circ\Psi_d$ and $\Psi_d$ are homotopy equivalences which then is used to deduce that so is $Q_d$. However the argument claiming to prove that $\Psi_d$ is a homotopy equivalence makes no explicit use of any properties of the map 
$\Psi_d$, such as being a quasifibration etc. 

\begin{proof}[Proof of Lemma \ref{lemma: K}]
We first prove that the fiber of  $Q_d \circ\Psi_d$ (and $Q_d$) over any point in $T(d,n)$ is contractible. Consider a configuration in $T(d,n)$.
\par

\begin{center}
\includegraphics[width=4cm]{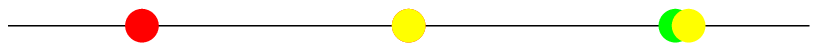} 
\end{center}
In the fiber over this configuration, all points in the upper half plane  are sent linearly to the fixed point $(1,0)$ and those in the lower half plane to $(-1,0)$. For a $2k$ or $2k+1$ fold particle lying on the real line, $k$ of the particles are moved to ${1,0}$ and $k$ are moved to  to $(-1,0)$ leaving $0$ or $1$ particles in place. 
This argument shows that the fibers of both  $Q_d\circ \Psi_d$ and $Q_d$ are contractible. 

\begin{center}
\includegraphics[width=4cm]{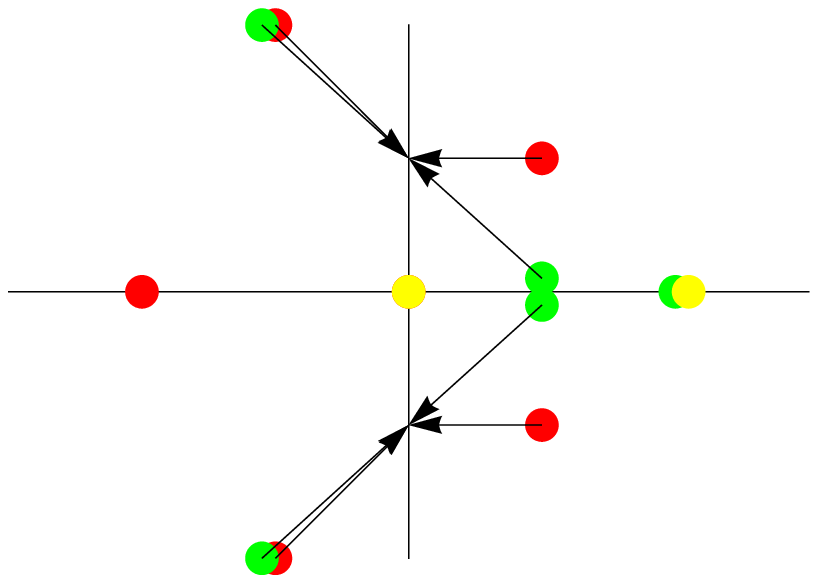} 
\end{center}
\par
Next, we will show that the maps  $Q_d \circ\Psi_d$ and $Q_d$ are both quasi-fibrations. Consider a point (configuration of particles)  in $T(d,n)$. By a \lq singular sub-configuration  for color $i$\rq\  we mean a collection $n$ particles of distinct colors other than the $i$-th color.  Intuitively, we think of a singular configuration for the $i$-th color as an obstacle through which a particle of the $i$-th color cannot pass. By a singular sub-configuration (without specifying the color) we mean a singular sub-configuration for any color. 
For non-negative integers $p_1,p_2,\dots, p_{n+1}$ with $\sum _i p_i\le d$, let  $T(d,n;p_1,\dots,p_{n+1})$ denote the subspace of $T(d,n)$ consisting of  configurations with precisely $p_i$ particles of  the $i$-th color. We start by proving that the restrictions of the maps $Q_d\circ \Psi_d$ and $Q_d$ to the pre-images of $T(d,n;p_1,\dots,p_{n+1})$ are quasi-fibrations.
 For any integer $k\ge 0$, let $T_k(d,n;p_1,\dots,p_{n+1})$ denote the subspace of $T(d,n;p_1,\dots,p_{n+1})$ consisting of configurations containing exactly $k$ singular sub-configurations. It is easy to see that the restriction of $Q_d \circ\Psi_d$ and $Q_d$ to the pre-image of $T_k(d,n;p_1,\dots,p_{n+1})$ is a locally trivial fiber bundle. Now we filter the space $T(d,n;p_1,\dots,p_{n+1})$ by closed subspaces $D_k(d,n;p_1,\dots,p_{n+1})$ of configurations containing $\ge k$ singular sub-configurations. 
 Set theoretic differences between these spaces are the spaces $T_k(d,n;p_1,\dots,p_{n+1})$ over which the maps are locally trivial fiber bundles and hence quasifibrations. Now we apply the Dold-Thom criterion (Lemma 4.3 of \cite{Hatch}). To do so we have to construct an open neighborhood of $D_{k+1}(d,n;p_1,\dots,p_{n+1})$ in $D_{k}(d,n;p_1,\dots,p_{n+1})$, a deformation of this neighborhood onto $D_{k+1}(d,n;p_1,\dots,p_{n+1})$, together with a corresponding covering neighborhood and a covering deformation required by the Dold-Thom criterion (since all the fibers are contractible the condition that the induced maps on the fibers are homotopy equivalences is automatically satisfied). Such neighborhoods and deformations are easy to describe intuitively. The set of points of the required  neighborhood of  $D_{k+1}(d,n;p_1,\dots,p_{n+1})$ consists of the points of  $D_{k+1}(d,n;p_1,\dots,p_{n+1})$ together with those configurations in $D_{k}(d,n;p_1,\dots,p_{n+1})$ with at least one non-singular sub-configuration of $n$ particles of different colors  contained in a \lq sufficiently small\rq\  interval. Here \lq sufficiently small\rq\  refers to the requirement that the particles in this sub-configuration be much nearer to each other than they are to any other particle and that the length of the minimal interval containing the sub-configuration be much less than the length of any interval containing a collection of $n+1$ particles of different color.  The deformation can now be defined by introducing a force of attraction between particles of different color (e.g. a force field satisfying  an inverse-square law). 
By induction on $k$ we show that the maps  $Q_d \circ\Psi_d$ and $Q_d$ are quasifibrations over $T(d,n;p_1,\dots,p_n)$. We now fix $p_1,p_2,\dots, p_n$ and filter the space $T(d,n)$ according to the number of points of the $n+1$-th color. The set theoretic differences between the terms of the filtration are precisely the spaces $T(d,n;p_1,\dots,p_{n+1})$ and we have already proved that the restriction of the maps $Q_d\circ \Psi_d$ and $Q_d$ to the inverse images of these spaces are quasi-fibrations. We apply again the Dold-Thom criterion. For this purpose we need to construct open neighborhoods of spaces of configurations  with no more than $k-2$ particles of the last color in the space of configurations of no more than $k$-particles of the last color. The method is again analogous to the one we used earlier. Our deformation will pull together pairs of particles of the last color which are very close by means of a gravitational force field between particles of the last color. For this purpose we must avoid hitting a \lq\lq singular point\rq\rq\    (a sub-configuration of $n-1$ particles of different colors). Again, it is easy to see that we can choose open neighborhoods and deformations with the right properties. This proves that the maps restricted to the pre-images of spaces with the number of particles of the first $n-1$ colors fixed are quasifibrations. Now we filter these spaces according to the number of particles of the last but one color. Proceeding by induction we see that $Q_d \circ\Psi_d$ and $Q_d$ are quasi-fibrations.
\end{proof}

\par\vspace{2mm}\par
\noindent{\bf Acknowledgements. }
The authors should like to take this opportunity to thank the referee
for his valuable suggestions.
In particular, the improvement of the assertions of 
Theorem \ref{thm: III}
and the proof of Theorem \ref{thm: A4} are essentially due to him.

The first author was supported by the Centre for Discrete Mathematics and its Applications, EPSRC award EP/D063191/1, during the last stages of this project, and
the third author is partially
supported by Grant-in-Aid for Scientific Research
(No. 19540068 (C)),
The Ministry of Education, Culture, Sports, Science and Technology, Japan.

\end{document}